  \newcommand{\e}{\varepsilon}
\newcommand{\RN}{\mathbb{R}^2}
\newcommand{\intr}{\int_{\mathbb{R}^2}}
\newtheorem{theorem}{Theorem}[section]
\newtheorem{definition}{Definition}[section]
\newtheorem{corollary}[theorem]{Corollary}
\newtheorem{proposition}[theorem]{Proposition}
\newtheorem{lemma}[theorem]{Lemma}
\numberwithin{equation}{section}
\begin{document}
\title[Blow up  at infinity in the  SU(3) Chern-Simons model, part I]{Blow up  at infinity in the  SU(3) Chern-Simons model, part I}
\author{Ting-Jung Kuo}
\address[Ting-Jung Kuo]{
Department of mathematics, National Taiwan Normal University, Taipei, 11677, Taiwan}
\email{tjkuo1215@ntnu.edu.tw}
\author{Youngae Lee}
\address[Youngae Lee]{ Department of Mathematics Education, Teachers College, Kyungpook National University, Daegu, South Korea}
\email{youngaelee@knu.ac.kr}
\author{Chang-Shou Lin}
\address[Chang-Shou Lin]{Taida Institute for Mathematical Sciences, Center
for Advanced Study in Theoretical Sciences, National Taiwan University,
No.1, Sec. 4, Roosevelt Road, Taipei 106, Taiwan}
\email{cslin@math.ntu.edu.tw }

\begin{abstract}
We consider  non-topological solutions of a nonlinear elliptic system problem (see \eqref{cs4} below) derived from the $SU(3)$
Chern-Simons models in $\mathbb{R}^2$.
The existence of  non-topological solutions even for radial  symmetric case has been a long standing open problem.
 Recently, Choe, Kim, and Lin in \cite{CKL2,CKL3}  showed the existence of radial symmetric non-topological solution when the vortex points collapse.
However, the arguments in \cite{CKL2,CKL3} cannot work for an arbitrary configuration of vortex points.  In this paper, we develop a new approach by using  different scalings
for different components of the system to construct a family of non-topological solutions, which blows up at infinity.
% Although we can prove the invertibility of the
%associated linearized operator by delicate analysis, it turns out that the norm of inverse
%of the linearized operator might be too big. We succeed to overcome this obstacle by using a suitable weighted $L^2$-norm estimation
%and pointwise estimations with respect to each component.
\end{abstract}

\date{\today }
\keywords{non-Abelian Chern-Simons models; non-topological solutions;
partial blowing up solutions}
\maketitle

\section{Introduction}

The relativistic self-dual Abelian Chern-Simons-Higgs model has been
proposed by Hong, Kim, Pac in \cite{HKP} and independently by Jackiw,
Weinberg in \cite{JW} to explain the high critical temperature
superconductivity.  The following nonlinear elliptic equation is derived
from the Euler-Lagrangian equation of the Chern-Simons-Higgs model via a
vortex ansatz (see \cite{HKP, JW, T2, yang1}):
\begin{equation}
\Delta v\left( x\right) +e^{v\left( x\right) }\left( 1-e^{v\left( x\right)
}\right) =4\pi \sum_{j=1}^{N}\delta_{p_{j}}\left( x\right) \mbox{ in }%
\mathbb{R}^{2},  \label{cs1}
\end{equation}
where $p_{j}^{\prime}s$ are called vortex points and $\delta_{p_{j}}$ denotes the
Dirac measure at the vortex point $p_{j}$. Here $p_{j}^{\prime}s$ are not
necessary to be distinct. We also refer to \cite{CK, CKL,   LY2, NT} for
recent developments of \eqref{cs1}. A solution $v\left( x\right) $ of (\ref%
{cs1}) is called a topological solution if $v\left( x\right) \rightarrow0$
as $\left \vert x\right \vert \rightarrow+\infty$, and is called a
non-topological solution if $v\left( x\right) \rightarrow-\infty$ as $\left
\vert x\right \vert \rightarrow+\infty$.

Let
 $\beta(v)=\int_{\mathbb{R}^{2}}e^{v\left( x\right) }\left( 1-e^{v\left( x\right) }\right) dx$
be the total magnetic flux (or mass) of $v$. Equivalently, $v$ satisfies
\begin{equation*}
v(x)=-\Big(\frac{\beta(v)}{2\pi}-2N\Big)\ln|x|+O(1) \ \mbox{at}\ \infty.
\end{equation*}

It is easy to show that if $v$ is a topological solution of \eqref{cs1},
then $\beta (v)=4\pi N$. For any configuration of vortex points, Wang in
\cite{wang}, and Spruck and Yang in \cite{S-W1} proved the existence of
topological solution of \eqref{cs1}. To find topological
solutions of \eqref{cs1}, they used the monotone scheme and the variational method
together with the Moser-Trudinger inequality.

However, it is more difficult to find non-topological solutions of \eqref{cs1}.
Generally speaking, the variational method together with the
Moser-Trudinger inequality could not work well for finding solutions
with logarithmic growth   at infinity. Similar situation in the
literature has been occurred in the study of the scalar curvature equation
in $\mathbb{R}^{2}$, where the coefficient $k(x)$ is always assumed to have
certain decays at infinity and the range of the logarithmic growth of a
solution has certain constraint. For example, see \cite{M}.

Spruck and Yang in \cite{S-W2} proved that if $v(x)$ is a radial  symmetric nontopological
solution of \eqref{cs1}, then its flux satisfies $\beta (v)>8\pi \left(
1+N\right) $. Since \cite{S-W2}, the following question to the equation \eqref{cs1}
has been arised:\medskip

\emph{For any configuration} $\{p_{j}\}_{j=1}^{N}$ \emph{and} $\beta >8\pi
\left( 1+N\right) $, \emph{does there exist a non-topological solution of} \eqref{cs1} \emph{such that} $\beta (v)=\beta ?$ \medskip

In \cite{CI}, by viewing (\ref{cs1}) as a perturbation of the Liouville
equation, Chae and Imanuvilov obtained the existence of non-topological
solution when $\beta $ is closed to $8\pi \left( 1+N\right) $. But it is unknown whether $\beta >8\pi (1+N)$ or not for this particular solution. Later, Chan, Fu, and Lin in \cite{CFL} solved the
problem \eqref{cs1} for the case $p_{j}=0$ for any $j$. Moreover, they proved the
existence and uniqueness of  radial  symmetric non-topological solutions for
any $\beta >8\pi (1+N)$. Finally, Choe, Kim and Lin \cite{CKL} used the
degree theory and proved the following theorem :

\medskip
\noindent
\textbf{Theorem A.}\cite{CKL} \emph{For any configuration of vortex points
and any number }$\beta >8\pi  ( 1+N ) $\emph{\ satisfying }$\beta \notin
 \{ \frac{8\pi N k}{k-1}|k=2,\cdots,N \} $\emph{,
there is a non-topological solution }$v$\emph{\ to (\ref%
{cs1}) satisfying }%
$\beta (v)=\beta$.
\medskip

The idea in \cite{CKL} is to obtain   a priori bound  for solutions when (%
\ref{cs1}) is deformed by collapsing vortex points into a single one, and
then they could compute the topological degree by the result of radial
symmetric solutions in \cite{CFL}. However, the existence   still
remains open for $\beta \in \left\{ 8\pi N\frac{k}{k-1}|k=2,3,\cdot \cdot
\cdot ,N\right\} $.

The non-Abelian relativistic Chern-Simons models was proposed by Kao-Lee
\cite{KL} and Dunne \cite{D1, D2, D3}. The model is defined in the $\left(
2+1\right) $ Minkowski space $\mathbb{R}^{1,2}$, and the gauge group is a
compact Lie group with a semi-simple Lie algebra $(\mathcal{G},[,])$. The
Chern-Simons Lagrangian density in $\left( 2+1\right) $ dimensional
spacetime involves the Higgs field $\phi $ with values in $\mathbb{C}$ and
the gauge potential $A=(A_{0},A_{1},A_{2})$. We restrict to consider the
energy minimizers of the Lagrangian functional, and  obtain the
self-dual Chern-Simons equations:
\begin{equation}
\left\{
\begin{array}{l}
D_{-}\phi  =0,  \label{cs2} \\
F_{+-} =\frac{1}{\kappa ^{2}}\left[ \phi -\left[ \left[ \phi ,\phi ^{+}%
\right] ,\phi \right] ,\phi ^{+}\right] ,
\end{array}%
\right.
\end{equation}%
where $D_{-}=D_{1}-iD_{2}$ and $F_{+-}=\partial
_{+}A_{-}-\partial _{-}A_{+}+\left[ A_{+},A_{-}\right] $ with $A_{\pm
}=A_{1}\pm iA_{2}$ and $\partial _{\pm }=\partial _{1}\pm i\partial _{2}$.
Dunne considered a simplified form of the self-dual system \eqref{cs2}, in
which the fields $\phi $ and $A$ are algebraically restricted:%
\begin{equation*}
\phi =\sum_{a=1}^{r}\phi ^{a}E_{a},\ A_\mu=-i\sum_{a=1}^r A^a_{\mu}H_a,
\end{equation*}%
where $r$ is the rank of the gauge Lie algebra, $E_{a}$ is the simple root
step operator, $H_a$ is Cartan subalgebra elements,  $\phi ^{a}$ are complex-valued functions, and $A^a_{\mu}$ is real value function. Let
$v_{a}=\log \left\vert \phi ^{a}\right\vert \mbox{, }a=1,\cdot \cdot \cdot ,r.$
Then the equation (\ref{cs2}) can be reduced to the following system of
equations:%
\begin{equation}
\Delta v_{a}+\frac{1}{\kappa ^{2}}\left(
\sum_{b=1}^{r}K_{ab}e^{v_{b}}-\sum_{b=1}^{r}%
\sum_{c=1}^{r}e^{v_{b}}K_{bc}e^{v_{c}}K_{ac}\right) =4\pi
\sum_{j=1}^{N_{a}}\delta _{p_{j}^{a}}\mbox{ in }\mathbb{R}^{2},a=1,\cdot
\cdot \cdot ,r,  \label{cs3}
\end{equation}%
where $K=\left( K_{ab}\right) $ is the Cartan matrix of a semi-simple Lie
algebra, $\left\{ p_{j}^{a}\right\} $ are zeros of $\phi ^{a}$. We refer to
\cite{yang2} for the detail from (\ref{cs2}) to (\ref{cs3}).

We want to study the case when the compact Lie group is $SU\left(
3\right) $ and the associated Cartan matrix $K=\left(
\begin{array}{cc}
2 & -1 \\
-1 & 2%
\end{array}%
\right) $. Without loss of generality, we assume $\kappa =1$. Then (\ref{cs3}) becomes%
\begin{equation}
\left\{
\begin{array}{l}
\Delta v_{1}+2e^{v_{1}}\left( 1-2e^{v_{1}}\right) -e^{v_{2}}\left(
1-2e^{v_{2}}-e^{v_{1}}\right) =4\pi \sum_{j=1}^{N_{1}}\delta _{p_{j}} \\
\Delta v_{2}+2e^{v_{2}}\left( 1-2e^{v_{2}}\right) -e^{v_{1}}\left(
1-2e^{v_{1}}-e^{v_{2}}\right) =4\pi \sum_{j=1}^{N_{2}}\delta _{q_{j}}%
\end{array}%
\right. \mbox{in }\mathbb{R}^{2}.  \label{cs4}
\end{equation}%
 According to the
asymptotic behavior of solutions at infinity, we define
three types of solutions as follows:
\begin{definition}
\label{D2} Let $\left( v_{1},v_{2}\right) $ be a solution of (\ref{cs4}).

(i) $\left( v_{1},v_{2}\right) $ is called a topological solution if $%
\lim_{\left \vert x\right \vert \rightarrow+\infty}v_{i}(x)=0$, $i=1,2.$

(ii) $\left( v_{1},v_{2}\right) $ is called a non-topological solution if $%
\lim_{\left \vert x\right \vert \rightarrow+\infty}v_{i}(x)=-\infty$, $i=1,2.$

(iii) $\left( v_{1},v_{2}\right) $ is called a mixed-type solution if either%
\begin{equation*}
\lim_{\left\vert x\right\vert \rightarrow+\infty }\left( v_{1}(x),v_{2}(x)\right)
=\left( \ln \frac{1}{2},-\infty \right) \text{ or }\lim_{\left\vert
x\right\vert \rightarrow+\infty }\left( v_{1}(x),v_{2}(x)\right) =\left( -\infty
,\ln \frac{1}{2}\right).
\end{equation*}
\end{definition}

For any configuration $\{p_{j}^{a}\}$ in $\mathbb{R}^{2}$, Yang in \cite%
{yang2} proved the existence of topological solutions for (\ref{cs3}), again
by the variational method and the Moser-Trudinger inequality. Since \cite%
{yang2}, it has been a long standing open problem to find non-topological as
well as mixed-type solutions.

 In this paper, we mainly concern with the structure of non-topological solutions of \eqref{cs4}.
Given any pair $(\beta _{1},\beta _{2})$ with $\beta _{i}>1$, $i=1,2$, we
ask the following question about the existence of non-topological
solutions:\medskip

\emph{Given any  pair} $(\beta _{1},\beta _{2})$, \emph{is there a solution} $(v_{1},v_{2})$ \emph{of} \eqref{cs4} \emph{%
satisfying   }
\begin{equation}
\left\{
\begin{array}{l}
v_{1}(x)=-2\beta _{1}\ln |x|+O(1) \\
v_{2}(x)=-2\beta _{2}\ln |x|+O(1)%
\end{array}%
\right. \ \emph{as}\ |x|\rightarrow +\infty ?  \label{q1}
\end{equation}%

Very recently, the first partial  result for the existence of
non-topological solutions of \eqref{cs4} satisfying \eqref{q1} has been obtained by Ao, Lin, and Wei in \cite{ALW}%
. Similar to the single equation \eqref{cs1}, they considered \eqref{cs4} as a
perturbation of $SU(3)$ Toda system, and obtained non-topological solutions when $(\beta_1,\beta_2)$ is closed to $(N_2+2,N_1+2)$. However, (\ref{cs4}) is much more difficult to
solve than (\ref{cs1}) due to the large dimension of freedom for solutions
to $SU(3)$ Toda system, but they could overcome this difficulty by  assuming some conditions on $\{p_j\}$, $\{q_j\}$, $N_1$ and $N_2$ (see \eqref{alw_con} below).

At this point, one might ask  what is the possible range of $(\beta _{1},\beta _{2})$ for
existence of solutions to \eqref{cs4} satisfying (\ref{q1})? Obviously, $\beta _{i}$
must satisfy $\beta _{i}>1$ for $e^{v_i}\in L^1(\RN)$, $i=1,2$. To find the suitable range of $%
\beta _{i}$, it is natural to start with radial symmetric solutions of %
\eqref{cs4}. In \cite{HL}, Huang and Lin studied the system (\ref{cs4}) when
$p_{j}=q_{j}=0$, that is,%
\begin{equation}
\left\{
\begin{array}{l}
 \frac{d^2 v_{1}}{dr^2}+\frac{1}{r}\frac{d v_1}{dr} +2e^{v_{1}}\left( 1-2e^{v_{1}}\right) -e^{v_{2}}\left(
1-2e^{v_{2}}-e^{v_{1}}\right) =4\pi N_{1}\delta _{0} \\
\frac{d^2 v_{2}}{dr^2}+\frac{1}{r}\frac{d v_2}{dr}+2e^{v_{2}}\left( 1-2e^{v_{2}}\right) -e^{v_{1}}\left(
1-2e^{v_{1}}-e^{v_{2}}\right) =4\pi N_{2}\delta _{0}%
\end{array}%
\right. . \label{cs5}
\end{equation}%
%Hereafter, the notation $A_{k}=O\left(  B_{k}\right)  $ for any two sequences
%of numbers means that there exists $C>0$, independent of $k$ such that
%$\left \vert A_{k}\right \vert \leq C\left \vert B_{k}\right \vert $. Similarly,
%$A_{k}=o\left(  B_{k}\right)  $ means that $\frac{A_{k}}{B_{k}}\rightarrow0$
%as $k\rightarrow+\infty$.
It is convenient to rewrite the nonlinear terms of \eqref{cs5} as $%
2F_{1}-F_{2}$ and $2F_{2}-F_{1}$ respectively, where
$
F_{1}=e^{v_{1}}-2e^{2v_{1}}+e^{v_{1}+v_{2}},\
F_{2}=e^{v_{2}}-2e^{2v_{2}}+e^{v_{1}+v_{2}}.
$
Among others, they proved

\medskip
\noindent
\textbf{Theorem B.}\cite{HL} \emph{Let }$\left( v_{1}(r),v_{2}(r)\right) $
\emph{be an entire radial solutions of \eqref{cs5}. Then}
\begin{equation}
\int_{\mathbb{R}^{2}}|F_{1}|dx<+\infty ,\quad \int_{\mathbb{R}%
^{2}}|F_{2}|dx<+\infty, \label{bddint}
\end{equation}%
\emph{and} $\left( v_{1},v_{2}\right) $ \emph{must satisfy one of the boundary
condition at }$\infty $ $\emph{in}$ \emph{Definition \ref{D2}. Moreover, one of the
followings holds.}

\noindent
(1) \emph{If} $\left( v_{1},v_{2}\right) $ \emph{is a non-topological
solution, then}%
\begin{equation}
\begin{aligned}\label{b} \left\{ \begin{array}{ll} v_{1}\left( r\right) &
=-2\beta_{1}\ln r+O\left( 1\right)  \\ v_{2}\left( r\right) &
=-2\beta_{2}\ln r+O\left( 1\right)  \end{array}\right.\ \emph{at}\ \infty, \end{aligned}
\end{equation}%
 \emph{for some }$\beta _{1}>1$\emph{\ and }$\beta _{2}>1$%
\emph{. Furthermore, }%
\begin{equation}
J\left( \beta _{1}-1,\beta _{2}-1\right) >J\left( N_{1}+1,N_{2}+1\right)
,\ \textrm{where }\ J\left( x,y\right) =x^{2}+xy+y^{2}.
\label{1}
\end{equation}
\noindent
(2) \emph{If} $(v_{1},v_{2})$ \emph{is a mixed-type solution, then either}

$\quad v_1(r)\to\ln\frac{1}{2}$ \emph{and} $v_2(r)=-2\beta_2\ln r +O(1)$ \emph{%
for some} $\beta_2>1$ as $r\to+\infty$ \emph{or}

$\quad v_2(r)\to\ln\frac{1}{2}$ \emph{and} $v_1(r)=-2\beta_1\ln r +O(1)$ \emph{%
for some} $\beta_1>1$ as $r\to+\infty$.

\noindent
(3) \emph{If} $(v_{1},v_{2})$ \emph{is a topological solution, then}
$ (v_{1}(r),v_{2}(r))\to(0,0)$ \emph{exponentially as} $r\to+\infty$. \medskip

Compared to \eqref{cs1}, one of main difficulties of \eqref{cs4} is whether
the nonlinear terms $2F_{1}-F_{2}$ and $2F_{2}-F_{1}$ belong to  $L^{1}(\mathbb{R}%
^{2})$ or not. Indeed, it is clear that our system is neither cooperative
nor competitive in the conventional classification of systems of equations,   that is, each nonlinear term in \eqref{cs4} is not  monotone with respect to any of $v_1$ and $v_2$.
Even in the case for radial symmetric solutions, the proof of %
\eqref{bddint} is not easy. For non-radial  solution of %
\eqref{cs4}, the question whether \eqref{bddint} holds or not is an
important open problem for understanding the complete picture for the
structure of solutions to (\ref{cs4}).

According to \eqref{1}, we define%
\begin{equation*}
\Omega =\left\{ \left( \beta _{1},\beta _{2}\right) |\ \beta _{1}>1,\beta
_{2}>1,J\left( \beta _{1}-1,\beta _{2}-1\right) >J\left(
N_{1}+1,N_{2}+1\right) \right\} .
\end{equation*}%
Then naturally we ask:

\medskip
\noindent
\textbf{Question.} \emph{Suppose }$\left( \beta _{1},\beta _{2}\right) \in
\Omega $\emph{. Does there exist a radial symmetric solution of (\ref{cs5}%
) subject to the boundary condition (\ref{b})?\medskip }

In view of Theorem B, the condition for $\left( \beta _{1},\beta _{2}\right)
\in \Omega $ is necessary for the existence of radial symmetric
non-topological solutions of (\ref{cs5}). However, Choe, Kim, Lin in \cite{CKL2}
pointed out that $\left( \beta _{1},\beta _{2}\right) \in \Omega $ might not
be sufficient for the existence of solutions due to the phenomena of partial
blowing up, i.e. only one of components of solutions tends to $-\infty$ locally in   $\RN$,
but the other does not. In \cite{CKL2}, Choe, Kim and Lin studied the above
problem and introduced for any $N=(N_{1},N_{2})$,
\begin{equation*}
S_{N}=\left\{ \left( \beta _{1},\beta _{2}\right) \left\vert
\begin{array}{l}
\beta _{1}>1,\beta _{2}>1, \\
-2N_{1}-N_{2}-3<\beta _{2}-\beta _{1}<N_{1}+2N_{2}+3, \\
2\beta _{1}+\beta _{2}>N_{1}+2N_{2}+6\mbox{, }\beta _{1}+2\beta
_{2}>2N_{1}+N_{2}+6%
\end{array}%
\right. \right\} .
\end{equation*}%
We note that $S_{N}\subseteq \Omega $ (see \cite{CKL2} for the proof). They
derived a priori bound for all solutions by deforming the parameters $%
\beta _{1}$ and $\beta _{2}$, and applied the degree theory to prove the
existence of radial symmetric non-topological  solutions provided that $%
(\beta _{1},\beta _{2})\in S_{N}$:

\medskip
\noindent
\textbf{Theorem C.}\cite{CKL2} \emph{If} $\left( \beta_{1},\beta_{2}\right)
\in S_{N}$\emph{\ then (\ref{cs5}) has a radial symmetric solution subject
to the boundary condition (\ref{b}).}

\medskip

As we already
mentioned, the main step in the proof of Theorem C is to derive the
global a priori bounds for any solution with $(\beta _{1},\beta _{2})\in
S_{N}$. In fact, it has also been proved that $S_{N}$ is the best possible
region to obtain a priori bounds for radial solutions. Indeed, for $(\beta
_{1},\beta _{2})\in \partial S_{N}$, they did prove the existence of partial
blowing up solutions. More precisely, let $\ell _{i}$ be the line
defined by%
\begin{equation*}
\ \ \quad\ell _{1}:=\left\{ \ (\beta _{1},\beta _{2})\ |\ \beta _{1}>1,\beta
_{2}>1,\beta _{2}-\beta _{1}=N_{1}+2N_{2}+3\right\} ,\text{ and}
\end{equation*}%
\begin{equation*}
\ell _{2}:=\left\{ \ (\beta _{1},\beta _{2})\ |\ \beta _{1}>1,\beta
_{2}>1,2\beta _{1}+\beta _{2}=N_{1}+2N_{2}+6\right\} \text{.}
\end{equation*}%
Then  $\ell _{1}\cup \ell _{2}\subseteq \partial S_{N}$.
They used shooting method to prove

\medskip
\noindent
\textbf{Theorem D.}\cite{CKL2,CKL3}  \emph{Let }$\left( \beta _{1},\beta
_{2}\right) \in \ell _{1}\cup \ell _{2}$\emph{. Then there exists a sequence
of partial blowing up (radial) solutions }$\left( v_{1,n}\left( r\right)
,v_{2,n}\left( r\right) \right) $ \emph{ to (\ref{cs5}) such that}
 \begin{equation*}
 \left \{
\begin{array}{l} v_{i,n}=-2\beta_{i,n}\ln r+O\left( 1\right) \mbox{ as }r\rightarrow +\infty,
\ i=1,2,\\   \left( \beta_{1,n},\beta_{2,n}\right) \rightarrow \left( \beta_{1},\beta
_{2}\right) \in \ell_1\cup\ell_2,
\end{array}
\right.
\end{equation*}
\emph{and $\lim_{n\rightarrow  \infty}(v_{1,n}(r),v_{2,n}(r))=(V(r),-\infty)$  in $C_{\textrm{loc}}(\RN)$, where $V(r)+\ln2$ is a solution of  \eqref{cs1}. Let $V$ satisfy
 $ V(r)=-2\left( {\gamma} -N_{1}\right) \ln r
+O\left( 1\right) \mbox{ as }\  r\rightarrow+\infty$. Then  \\
 (i) if $(\beta_1,\beta_2)\in\ell_1$, then $\gamma>2N_1+2N_2+4$, and only $v_{2,n}$  blows up at infinity,
\\
(ii) if $(\beta_1,\beta_2)\in\ell_2$, then $\gamma<2N_1+2N_2+4$, and both $v_{1,n}$ and $v_{2,n}$  blow up at infinity \\
Here  we say $v_{i,n}$ blows up at infinity if the contribution of the mass of $v_{i,n}$ at infinity cannot be ignored.
}

\medskip

 %Theorem D does not tell us whether $(\beta_{1,n} \beta_{2,n})\in S_N$ or not. It would be interesting to know whether $(\beta_{1,n} \beta_{2,n})\in S_N$ or not.

In view of  Theorem D,  we note that  $2N_1+2N_2+4$ is the critical value for $\gamma$ to determine the asymptotic behavior of $\left( v_{1,n},v_{2,n}\right)$. If  $\gamma>2N_1+2N_2+4$, then
    only  $v_{2,n}$ blows up at infinity, that is, although  the contribution of the mass of $v_{2,n}$ at infinity cannot be ignored, but this behavior of $v_{2,n}$ does not affect the total mass of $v_{1,n}$ such that $\lim_{n\to+\infty}\int_{\RN}2e^{v_{1,n}(x)}(1-2e^{v_{1,n}(x)})dx=\intr 2e^{V(x)}(1-2e^{V(x)})dx$.  We call this kind of solutions \textit{bubbling type I solutions}. \\ On the other hand, the case  $\gamma<2N_1+2N_2+4$  is rather more complicated since not only $v_{2,n}$ but also $v_{1,n}$ blow  up at infinity  on the different regions. We call this kind of solutions \textit{bubbling type II solutions}.

\medskip

After Theorem C and Theorem D, one might ask whether $S_{N}$ is the optimal
range of $(\beta _{1},\beta _{2})$ to guarantee the existence of solutions to %
\eqref{cs4} satisfying the boundary conditions \eqref{q1}. As in the
single equation  \eqref{cs1}, it is very difficult to use the variational method
to solve the problem \eqref{q1}. So we turn to the method of degree theory
as Choe, Kim, and Lin did in \cite{CKL}. Starting with this paper, we want
to initiate the program to study this problem. The first step is to
see whether a priori bound would fail if $(\beta _{1},\beta _{2})\in
\partial S_{N}$, in other words, we want to understand for a generic set of
vortex points  and $(\beta _{1},\beta _{2})\in \partial S_{N}$, whether
there is a sequence of blowing-up non-topological solutions of \eqref{cs4}  satisfying \eqref{q1} with $(\beta
_{1,n},\beta _{2,n})\to (\beta _{1},\beta _{2})$. Solving this
question would be helpful for us to understand the structure of
non-topological solutions to the equation \eqref{cs4}.

Let $\{p_{j}\}_{j=1}^{N_{1}}$ be the vortex points of the first equation of %
\eqref{cs4} and $v$ be a solution of \eqref{cs1} with $%
\{p_{j}\}_{j=1}^{N_{1}}$ satisfying
\begin{equation}
v\left( x\right) =-2\left( {\gamma} -N_{1}\right) \ln \left\vert x\right\vert
+O\left( 1\right) \mbox{ as }\left\vert x\right\vert \rightarrow+\infty .
\label{asymp}
\end{equation}%
We remark that  Theorem A implies the existence of $v$ if ${\gamma} > 2N_1+2$ and ${\gamma} \notin
\{2N_{1}\frac{k}{k-1}|k=2,3,\cdot \cdot \cdot ,N_{1}\}$. Now we define the non-degeneracy condition for the equation \eqref{cs1}.
\begin{definition}
\label{D1} A solution $v(x)$ of \eqref{cs1} is called a non-degenerate
solution if the associated linearized operator $L:=\Delta +e^{v(x)}(1-2e^{v(x)})$   has no
zero eigenvalue. In other words, there is no non-trivial kernel of $L$   in $L^\infty(\mathbb{R}^2)$.
\end{definition}

Firstly, we observe the limit behavior for   $(\beta_{1,\varepsilon},\beta_{2,\varepsilon})$
 of a family of bubbling solutions  to  \eqref{cs4} according to the range of $\gamma$.
\begin{theorem}
\label{T0}
   {Let} $\left(
v_{1,\varepsilon},v_{2,\varepsilon}\right) $  {be a family of non-topological solutions}  {to} (\ref{cs4}) satisfying \eqref{q1} with $(\beta_{1,\varepsilon},\beta_{2,\varepsilon})$. Let $(v_{1,\e}(x),v_{2,\varepsilon}(x))\to(V(x),-\infty)$ in $C^0_{\textrm{loc}}(\RN)$, where
a function $V$ satisfies \eqref{asymp}.
We   assume that
  $v_{2,\varepsilon}\left( \frac {x}{\varepsilon}\right) -2\ln \varepsilon\to W(x)\ \textrm{in}\  C^0(\mathbb{R}^2)$ for some function $W$, and $\sup_{x\in\mathbb{R}^2}(e^{v_{i,\varepsilon}(x)}|x|^2)<C\ \textrm{ for some constant} \ C>0, \textrm{independent of}\ \e>0.$
Then  we have

(i) if $\gamma>2N_1+2N_2+4$, then  $\lim_{\e\to0}\left( \beta_{1,\e},\beta_{2,\e}\right)\in\ell_1.$

(ii) if $2N_1+2<\gamma<2N_1+2N_2+4$  and  $\lim_{\e\to0}\int_{\RN}2e^{v_{1,\e}}(1-2e^{v_{1,\e}})dx=4\pi(2N_1+2N_2+4)$,  then  $\lim_{\e\to0}\left( \beta_{1,\e},\beta_{2,\e}\right)\in\ell_2.$
\end{theorem}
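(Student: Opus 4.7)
The plan is to combine two flux identities for the system with a distributional analysis of the bubble profile $W$. Integrating the two equations of \eqref{cs4} over $\RN$ and using $v_{i,\e}\sim -2\beta_{i,\e}\ln|x|$ at infinity gives $\intr(2F_1-F_2)dx=4\pi(\beta_{1,\e}+N_1)$ and $\intr(2F_2-F_1)dx=4\pi(\beta_{2,\e}+N_2)$, where $F_i=e^{v_i}-2e^{2v_i}+e^{v_1+v_2}$. The linear combinations $3F_1=2(2F_1-F_2)+(2F_2-F_1)$ and $3F_2=(2F_1-F_2)+2(2F_2-F_1)$ convert these into the equivalent identities
\begin{align*}
\intr F_{1,\e}\,dx &= \tfrac{4\pi}{3}(2\beta_{1,\e}+\beta_{2,\e}+2N_1+N_2),\\
\intr F_{2,\e}\,dx &= \tfrac{4\pi}{3}(\beta_{1,\e}+2\beta_{2,\e}+N_1+2N_2),
\end{align*}
so it suffices to compute $\lim_\e \intr F_{i,\e}\,dx$ and read off $(\beta_1,\beta_2)$.

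For part (ii), only the $F_1$-identity is needed. Writing $F_1=\tfrac12[2e^{v_1}(1-2e^{v_1})]+e^{v_1+v_2}$, the mass hypothesis gives $\tfrac12\intr 2e^{v_{1,\e}}(1-2e^{v_{1,\e}})\,dx\to 2\pi(2N_1+2N_2+4)$. For the cross term, on compact sets $e^{v_{1,\e}+v_{2,\e}}\to 0$ since $v_{2,\e}\to-\infty$; at the bubble scale $y=x/\e$ we have $e^{v_{2,\e}(y)}=\e^2 e^{W(x)}(1+o(1))$ while $e^{v_{1,\e}(y)}\le C\e^2/|x|^2$ by hypothesis, so after Jacobian $dy=\e^{-2}dx$ the product is $O(\e^2)$ times a fixed integrable function of $x$. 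Hence $\intr F_{1,\e}\,dx\to 2\pi(2N_1+2N_2+4)$, and the first identity yields $2\beta_1+\beta_2=N_1+2N_2+6$, i.e.\ $\lim(\beta_{1,\e},\beta_{2,\e})\in\ell_2$.

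For part (i), one first shows that, in the absence of $v_1$-bubbling, $\mu_{1,\e}:=\intr e^{v_{1,\e}}dx\to M_V:=\intr e^V dx$ and $\nu_{11,\e}:=\intr e^{2v_{1,\e}}dx\to N_V:=\intr e^{2V}dx$: on compact sets this is immediate from $v_{1,\e}\to V$ in $C^0_{\mathrm{loc}}$, and on the outer region the asymptotic $v_{1,\e}(y)\sim -2\beta_{1,\e}\ln|y|+c_\e$ carries a positive power of $\e$ in the constant $c_\e$ (because the limit $\beta_{1,\e}$ is strictly below $\gamma-N_1$ in this regime), giving a tail that vanishes uniformly in $\e$ as the truncation radius tends to infinity. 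The same estimates yield $\nu_{22,\e}:=\intr e^{2v_{2,\e}}dx\to 0$ and $\nu_{12,\e}:=\intr e^{v_{1,\e}+v_{2,\e}}dx\to 0$. Consequently $\intr F_{1,\e}\,dx\to M_V-2N_V=\intr e^V(1-2e^V)\,dx=2\pi\gamma$ (using $\intr e^u(1-e^u)\,dx=4\pi\gamma$ for $u=V+\ln 2$), so the first identity gives
\begin{equation*}
2\beta_1+\beta_2 = \tfrac{3\gamma}{2}-2N_1-N_2.
\end{equation*}

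The decisive new input is the limit equation satisfied by $W$. Setting $W_\e(x):=v_{2,\e}(x/\e)-2\ln\e$ and dividing the rescaled second equation by $\e^2$ gives $\Delta W_\e(x) + \e^{-2}NL_{2,\e}(x/\e) = 4\pi\sum_j\delta_{\e q_j}(x)$. Of the five pieces $-e^{v_1}+2e^{v_2}+2e^{2v_1}-4e^{2v_2}+e^{v_1+v_2}$ of $NL_2$, those involving $e^{v_2}$ and $e^{v_1+v_2}$ give the pointwise limits $2e^W$, $0$, $0$ after multiplication by $\e^{-2}$. The remaining pieces $-\e^{-2}e^{v_{1,\e}(x/\e)}$ and $2\e^{-2}e^{2v_{1,\e}(x/\e)}$ vanish pointwise away from $x=0$ (thanks to $e^{v_{1,\e}}|y|^2\le C$ together with the non-topological decay), yet their integrals over $\RN$ in the variable $x$ equal $\mu_{1,\e}$ and $\nu_{11,\e}$; hence they concentrate as Dirac masses
\begin{equation*}
\e^{-2}e^{v_{1,\e}(x/\e)}\rightharpoonup M_V\,\delta_0,\qquad \e^{-2}e^{2v_{1,\e}(x/\e)}\rightharpoonup N_V\,\delta_0
\end{equation*}
in the sense of distributions. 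Combining with $\sum_j\delta_{\e q_j}\rightharpoonup N_2\delta_0$, the limit equation becomes
\begin{equation*}
\Delta W + 2e^W = (4\pi N_2 + M_V - 2N_V)\,\delta_0 = (4\pi N_2 + 2\pi\gamma)\,\delta_0\quad\text{on }\RN.
\end{equation*}
Standard Liouville classification then yields $\intr e^W\,dx = 4\pi(N_2+1)+2\pi\gamma$; combined with $\intr F_{2,\e}\,dx\to \intr e^W\,dx$ (using $\nu_{22,\e},\nu_{12,\e}\to 0$) and the $F_2$-identity, this produces $\beta_1+2\beta_2=\tfrac{3\gamma}{2}+N_2-N_1+3$. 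Solving the $2\times 2$ linear system with the previous equation gives $\beta_1=\tfrac{\gamma}{2}-N_1-N_2-1$, $\beta_2=\tfrac{\gamma}{2}+N_2+2$, so $\beta_2-\beta_1 = N_1+2N_2+3$, proving $\lim(\beta_{1,\e},\beta_{2,\e})\in\ell_1$. The main technical obstacle is justifying the distributional concentration claim, which requires a uniform tail estimate $\lim_{R\to\infty}\sup_\e\int_{|y|>R}e^{v_{1,\e}}\,dy=0$ assembled from the pointwise bound $e^{v_{1,\e}}|x|^2\le C$, the non-topological decay of $v_{1,\e}$, and careful matching of constants at the transition radius $|y|\sim 1/\e$.
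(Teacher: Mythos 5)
Your overall architecture coincides with the paper's: the two flux identities you derive are exactly the paper's \eqref{1flux} rewritten in terms of $F_1,F_2$; the limit Liouville equation $\Delta W+2e^{W}=4\pi\left(N_2+\frac{\gamma}{2}\right)\delta_0$ is obtained, as in the paper, from the concentration of the rescaled first component at the origin; and the mass count $\int_{\RN}2e^{W}dx=8\pi\left(N_2+\frac{\gamma}{2}+1\right)$ then closes the linear system for $(\beta_1,\beta_2)$. Part (ii) is essentially complete and matches the paper: the only extra point is the vanishing of the cross term $\int_{\RN}e^{v_{1,\e}+v_{2,\e}}dx$, which your splitting (compact set plus the bound $e^{v_{1,\e}}|x|^2\le C$ against $\int e^{v_{2,\e}}\le C$) handles correctly.

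The gap is in part (i), at exactly the step you yourself flag as "the main technical obstacle" but do not carry out: the uniform tail estimate $\lim_{R\to\infty}\sup_{\e}\int_{|y|>R}e^{v_{1,\e}}\,dy=0$, which underlies both the convergence $\mu_{1,\e}\to M_V$, $\nu_{11,\e}\to N_V$ and the Dirac concentration $\e^{-2}e^{v_{1,\e}(x/\e)}\rightharpoonup M_V\delta_0$. Two concrete problems. First, your sketched justification is circular: you appeal to "$\beta_{1,\e}$ is strictly below $\gamma-N_1$ in this regime", but the relation between $\lim\beta_{1,\e}$ and $\gamma$ is precisely the conclusion of the theorem; a priori one only knows $\beta_{1,\e}>1$, and the constant in $e^{v_{1,\e}}\sim C_\e|x|^{-2\beta_{1,\e}}$ is not uniform in $\e$, so no tail bound follows from \eqref{q1} alone. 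Second, the hypothesis $\gamma>2N_1+2N_2+4$ never enters your tail argument, yet it must: in case (ii), where $\gamma<2N_1+2N_2+4$, mass of $2e^{v_{1,\e}}(1-2e^{v_{1,\e}})$ \emph{does} escape to infinity and $\int F_{1,\e}\not\to 2\pi\gamma$, so any proof of the no-escape claim that does not use the inequality would be wrong. The paper fills this gap with two estimates you would need to reproduce: (a) the spherical-average inequality $|x|\frac{d\bar{u}_{1,\e}}{d|x|}\le-(2+\nu)$ for all large $|x|$, obtained by integrating the first equation over $B_{|x|}(0)$ and bounding the contribution of $e^{v_{2,\e}}$ by its total limiting mass $4\pi\left(N_2+\frac{\gamma}{2}+1\right)$ --- it is here that $\gamma>2N_1+2N_2+4$ gives $4\pi\left(N_1+N_2-\frac{\gamma}{2}+1\right)<-4\pi$ beyond the transition radius $\delta/\e$, not merely inside it; and (b) the oscillation bound $u_{1,\e}(x)-u_{1,\e}(x')=O(1)$ for $|x|=|x'|$ via the Green representation together with $\int e^{u_{i,\e}}\le C$ and $e^{u_{i,\e}}|x|^2\le C$, which upgrades the decay of the average $\bar{u}_{1,\e}$ to the pointwise decay $e^{u_{1,\e}}=O(|x|^{-(2+\nu')})$. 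Without (a) and (b), both the limit equation for $W$ and the identity $\int F_{1,\e}\to 2\pi\gamma$ remain unproven.
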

In the proof of Theorem \ref{T0},  we observe rigorously how the range of $\gamma$ determine the mass concentration of $v_{1,\e}$.
Since $(v_{1,\e}(x),v_{2,\varepsilon}(x))\to(V(x),-\infty)$ in $C^0_{\textrm{loc}}(\RN)$ and the function $V$ satisfies \eqref{asymp}, we see that   \[
\lim_{\varepsilon\to0}\int_{|x|\le R}2e^{v_{1,\varepsilon}(x)}%
\left(1-2e^{v_{1,\varepsilon}(x)}\right)dx=4\pi{\gamma}+o(1),\] where $o(1)$ as $R\to+\infty$.
We note that $\gamma$ is related to the decay of $e^{v_{1,\varepsilon}}$.
If $\gamma>2N_1+2N_2+4$, then the decay of $e^{v_{1,\varepsilon}}$ is   fast enough at infinity so that the behavior of  $v_{2,\e}$ (blowing up at infinity)
does not affect the total mass of the first component $\int_{\mathbb{R}%
^2}2e^{v_{1,\varepsilon}}(1-2e^{v_{1,\varepsilon}})dx$, although the
asymptotic behavior of $v_{1,\varepsilon}$ at infinity is affected. Thus   $v_{1,\e}$ does not blow up at infinity,  that is,  \begin{equation}  \label{addmass}
\lim_{\varepsilon\to0}\int_{\mathbb{R}^2}2e^{v_{1,\varepsilon}(x)}%
\left(1-2e^{v_{1,\varepsilon}(x)}\right)dx=4\pi{\gamma}.
\end{equation} However, if $2N_1+2<\gamma<2N_1+2N_2+4$, then \eqref{addmass} does not hold any more and the extra mass of $v_{1,\e}$ appears at infinity. Therefore,  not only $v_{2,\e}$ but also $v_{1,\e}$ blow up at infinity. This
observation plays a important role in our analysis for the behavior of
 blowing up at infinity.

Our main goal    is that for any  $\left( \beta_{1},\beta
_{2}\right)\in\ell_1\cup\ell_2,$  we want to construct
 partial blowing up solutions  $\left(
v_{1,\varepsilon},v_{2,\varepsilon}\right) $ to \eqref{cs4}  satisfying   \eqref{q1} with $(\beta_{1,\varepsilon},\beta_{2,\varepsilon})\to \left( \beta_{1},\beta
_{2}\right)\in\ell_1\cup\ell_2$ and   $ v_{2,\varepsilon}\to-\infty$ in $C^0_\textrm{loc}(\RN)$.
    In this paper,   we only construct
   bubbling type I solutions. We will discuss    bubbling type II solutions in another paper \cite{LL}.

For any $(\beta _{1},\beta _{2})\in \ell_1$, we define \begin{equation}\label{ga1}
\gamma_1  :=\frac{2}{3}\left( 2N_{1}+2\beta _{1}+N_{2}+\beta _{2}\right).\end{equation}  We see that $2\beta_1+\beta_2>N_1+2N_2+6$, which implies that
$
{\gamma_1} >2N_{1}+2N_{2}+4.
$
Now  we introduce our main result.
\begin{theorem}
\label{T121}Let $\left( \beta_{1},\beta_{2}\right) \in \ell_1$.  Assume that there exists a    non-degenerate solution $v(x)$ of \eqref{cs1} satisfying
\eqref{asymp} with
$\gamma=\gamma_1$.
Then   there exists
non-topological    solution    $\left(
v_{1,\varepsilon},v_{2,\varepsilon}\right) $ to  \eqref{cs4} with $%
\varepsilon>0$  satisfying $v_{1,\e}(x)\to v(x)-\ln2$ and  $v_{2,\e}(x)\to-\infty$ in $C^0_\textrm{loc}(\RN)$,
$
v_{i,\varepsilon}\left( x\right) =-2\beta_{i,\varepsilon}\ln \left \vert
x\right \vert +O\left( 1\right) \mbox{ as }\left \vert x\right \vert
\rightarrow+\infty,$ where  $\lim_{\varepsilon\to0}\left(
\beta_{1,\varepsilon},\beta_{2,\varepsilon}\right)= \left( \beta_{1},\beta
_{2}\right)\in\ell_1.
$
\end{theorem}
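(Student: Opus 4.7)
The plan is to construct $(v_{1,\e},v_{2,\e})$ by a singular-perturbation / Lyapunov--Schmidt argument built on a two-scale approximate solution. The inner profile for the first component is forced to be $V_1(x):=v(x)-\ln 2$, because substituting this ansatz together with $v_2\equiv -\infty$ into the first equation of \eqref{cs4} collapses the coupling and reproduces exactly \eqref{cs1} for $v$. For the second component one expects a bubble at infinity at scale $1/\e$: setting $y=\e x$ and $\widetilde v_2(y):=v_{2,\e}(y/\e)-2\ln\e$, the rescaled second equation passes formally, as $\e\to 0$, to the Liouville-type equation $\Delta W + 2 e^{W}=4\pi N_2\delta_0$, since the remaining cross term $\e^{-2}e^{v_{1,\e}(y/\e)}$ is of order $\e^{2\beta_{1,\e}-2}\to 0$ under $\beta_1>1$. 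I take $W$ to be a radial solution of this limit equation with a free scale parameter $\mu>0$ reserved as a modulation.

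The approximate solution $(V_{1,\e},V_{2,\e})$ is then obtained by gluing. For the first slot I patch $V_1$ (which decays fast, as $|x|^{-2(\gamma_1-N_1)}$) with a far-field logarithmic correction producing the slower decay $|x|^{-2\beta_{1,\e}}$ that the coupling with the outer bubble forces on $v_{1,\e}$. For the second slot I use $W(\e\cdot)+2\ln\e$ together with the singular pieces $2\sum_j\ln|x-q_j|$ at the vortex points of the second equation, plus an additional logarithmic correction in the ``neck region'' $1\ll|x|\ll 1/\e$ dictated by the flux identities obtained from integrating \eqref{cs4}. Those identities give $\int F_1=2\pi\gamma_1$ and $\int F_2=\frac{4\pi}{3}(N_1+\beta_1+2N_2+2\beta_2)$, which is exactly how the value $\gamma_1$ in \eqref{ga1} is singled out: $V_1$ supplies the full $2\pi\gamma_1$ in the inner region while $W$ supplies only $4\pi(N_2+1)$ to $\int F_2$, the remaining $4\pi(N_1+\beta_1+N_2+1)$ being produced in the neck.

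Writing $v_{i,\e}=V_{i,\e}+\phi_i$ and linearizing \eqref{cs4} at $(V_{1,\e},V_{2,\e})$, the principal part decouples at leading order into an inner block governed by $L_1:=\Delta+e^{v}(1-2e^{v})$, the scalar Chern--Simons linearization at $v$, and an outer block given by the linearization of $\Delta+2e^{\cdot}$ at $W$. The non-degeneracy hypothesis on $v$ is used exactly to invert $L_1$ on a weighted H\"older space accommodating the logarithmic growth of $\phi_1$ at infinity; the outer Liouville linearization has a one-dimensional kernel generated by the scale mode $y\cdot\nabla W+2$ (translations being frozen by the vortex at the origin of the rescaled picture), which I treat by projecting orthogonally to this mode and then solving the resulting one-dimensional reduced equation by tuning $\mu$. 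With the coupled linear operator inverted modulo this kernel, the system becomes a fixed-point problem $(\phi_1,\phi_2)=\mathcal{T}_\e(\phi_1,\phi_2)$ which is a contraction on a small ball in the weighted space for $\e>0$ small; the solution so produced satisfies $v_{1,\e}\to v-\ln 2$ and $v_{2,\e}\to-\infty$ locally, and the parameters $(\beta_{1,\e},\beta_{2,\e})$ read off from the asymptotics of $V_{i,\e}+\phi_i$ converge to $(\beta_1,\beta_2)\in\ell_1$ by the flux identities and Theorem~\ref{T0}.

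The main technical obstacle I foresee is the neck region $1\ll|x|\ll 1/\e$: there the inner profile $V_1$ has already decayed at the fast rate $|x|^{-2(\gamma_1-N_1)}$, while the true $v_{1,\e}$ must approach the slow asymptotic $|x|^{-2\beta_{1,\e}}$, and the outer bubble $W(\e\cdot)$ only describes the window $|x|\sim 1/\e$. Constructing an approximate solution whose residual is small across this transition, controlling the non-sign-definite coupling term $e^{v_1+v_2}$ (for which no maximum principle is available because \eqref{cs4} is neither cooperative nor competitive), and verifying that the neck contribution to $\int F_2$ has the value $4\pi(N_1+\beta_1+N_2+1)$ predicted by the flux identities, are the delicate steps; they are what ultimately enforce $(\beta_{1,\e},\beta_{2,\e})\to(\beta_1,\beta_2)$ along the one-parameter family $\ell_1$ rather than to a single distinguished point.
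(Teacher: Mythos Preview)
Your overall architecture (two-scale ansatz, linearization into an inner Chern--Simons block and an outer Liouville block, Lyapunov--Schmidt reduction) matches the paper, but there is a genuine error in your identification of the outer limit problem that propagates through the rest of the argument. After rescaling $y=\e x$, the cross term in the second equation is $\e^{-2}e^{u_{1,\e}(y/\e)}(1-2e^{u_{1,\e}(y/\e)})$; your claim that this is $O(\e^{2\beta_{1,\e}-2})\to 0$ is only valid pointwise for fixed $y\neq 0$ using the \emph{far-field} asymptotics of $v_{1,\e}$. Near $y=0$ (i.e.\ $|x|=O(1)$) the profile is $V$, not the far field, and the term is of size $\e^{-2}$; its total mass is $\int_{\RN}e^{V}(1-2e^{V})\,dy=2\pi\gamma_1$, so as $\e\to 0$ it concentrates to $2\pi\gamma_1\,\delta_0$. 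The correct outer limit is therefore
\[
\Delta W+2e^{W}=4\pi\Big(N_2+\tfrac{\gamma_1}{2}\Big)\delta_0,\qquad \int_{\RN}2e^{W}=8\pi\Big(1+N_2+\tfrac{\gamma_1}{2}\Big),
\]
as in \eqref{6}--\eqref{17}. This extra $\tfrac{\gamma_1}{2}$ singularity accounts \emph{exactly} for the flux you are trying to manufacture in a ``neck region'': in the Type~I regime $\gamma_1>2N_1+2N_2+4$ there is no neck contribution at all (cf.\ Theorem~\ref{T0}(i) and \eqref{flux2}), and your neck analysis is an attempt to recover mass that was lost by taking the wrong limit. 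In the paper this concentrating term is not even replaced by a Dirac mass but by a precise auxiliary potential $u_{2,\e}^*$ with $\Delta u_{2,\e}^*=\tfrac{1}{2\e^2}f(V(\cdot/\e))$ (see \eqref{u2epstar}, \eqref{definitionofu2e}), whose expansion to order $\e^2$ (Lemma~\ref{diffgreen}) is needed to make the error small enough.

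A second consequence is that your reduced problem is misidentified. With singular weight $N_2+\tfrac{\gamma_1}{2}$, the linearized Liouville operator has kernel $\{Z_0\}$ if $\tfrac{\gamma_1}{2}\notin\mathbb{N}$ and $\{Z_0,Z_1,Z_2\}$ if $\tfrac{\gamma_1}{2}\in\mathbb{N}$ (see \eqref{z0}, \eqref{kerinx2}); the scale mode $Z_0$ is absorbed by working in $E_\alpha$, and the Lyapunov--Schmidt reduction is onto the two-dimensional parameter $a=a_1+ia_2$ in \eqref{17}, not a single scale $\mu$. Translations are \emph{not} automatically frozen by the vortex at $0$ when the total singular strength is an integer, which is precisely the case requiring the reduction (Proposition~\ref{propgoal}).
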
We note that when $N_1=1$, any non-topological (radial) solution of \eqref{cs1} is non-degenerate (see
\cite{CFL}). Therefore, Theorem \ref{T121} guarantees the existence of
 non-topological solutions to    \eqref{cs4} under even  some special condition on vortices, which was not considered in \cite{ALW}.
\begin{corollary}
\label{T122}Let $\left( \beta_{1},\beta_{2}\right) \in \ell_1$.  Assume that \begin{equation}N_1=1, \ p_{1}\neq\sum^{N_2}_{i=1}q_i.\label{condiforvortex}\end{equation}
Then  there exists
non-topological    solution    $\left(
v_{1,\varepsilon},v_{2,\varepsilon}\right) $ to  \eqref{cs4} with $%
\varepsilon>0$  satisfying    $v_{2,\e}(x)\to-\infty$ in $C^0_\textrm{loc}(\RN)$,
$
v_{i,\varepsilon}\left( x\right) =-2\beta_{i,\varepsilon}\ln \left \vert
x\right \vert +O\left( 1\right) \mbox{ as }\left \vert x\right \vert
\rightarrow+\infty,$ where  $\lim_{\varepsilon\to0}\left(
\beta_{1,\varepsilon},\beta_{2,\varepsilon}\right)= \left( \beta_{1},\beta
_{2}\right)\in\ell_1.
$\end{corollary}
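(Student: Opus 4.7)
The plan is to derive Corollary \ref{T122} as a direct application of Theorem \ref{T121}; the whole task reduces to verifying the sole hypothesis of that theorem---existence of a non-degenerate solution $v$ of \eqref{cs1} satisfying \eqref{asymp} with $\gamma=\gamma_1$---under the assumption $N_1=1$.

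First I would check that $\gamma_1$ lies in the admissible range. For $(\beta_1,\beta_2)\in\ell_1$ and $N_1=1$ one has $\beta_2-\beta_1=N_1+2N_2+3=2N_2+4$, and substituting into \eqref{ga1} yields
\begin{equation*}
\gamma_1=\tfrac{2}{3}\bigl(2+2\beta_1+N_2+\beta_2\bigr)=2\beta_1+2N_2+4>4=2N_1+2,
\end{equation*}
since $\beta_1>1$. Moreover, for $N_1=1$ the exceptional set $\{\tfrac{2N_1 k}{k-1}\,|\,k=2,\dots,N_1\}$ appearing in Theorem A is empty. Translating so that $p_1=0$, I would then invoke \cite{CFL} to select the (unique) radial non-topological solution $v$ of \eqref{cs1} with $v(x)=-2(\gamma_1-1)\ln|x|+O(1)$ at infinity.

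Second, to apply Theorem \ref{T121} one needs non-degeneracy of $v$ in the sense of Definition \ref{D1}. This is precisely the Chan--Fu--Lin fact quoted in the remark immediately following Theorem \ref{T121}: every radial non-topological solution of \eqref{cs1} with $N_1=1$ has the property that the linearization $L=\Delta+e^v(1-2e^v)$ has trivial kernel in $L^\infty(\mathbb{R}^2)$. With this non-degenerate $v$ in hand, Theorem \ref{T121} immediately produces the family $(v_{1,\varepsilon},v_{2,\varepsilon})$ with the asserted boundary behavior.

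The extra assumption $p_1\neq\sum_{i=1}^{N_2}q_i$ is not needed to verify the hypothesis of Theorem \ref{T121}; rather, I expect it to enter inside the proof of Theorem \ref{T121} itself as a non-vanishing condition in the Lyapunov--Schmidt reduction that fixes the location/scale of the concentrating bubble for the second component $v_{2,\varepsilon}$. Since the substantive analytic work is carried out in Theorem \ref{T121} and the non-degeneracy of $v$ is handled by \cite{CFL}, I anticipate no serious additional obstacle in the deduction of Corollary \ref{T122}; the main point to be careful about is simply matching conventions (flux, centering, radiality) when quoting \cite{CFL} and translating into the coordinate system used in the construction.
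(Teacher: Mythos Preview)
Your deduction is correct and matches the paper's own proof: Corollary~\ref{T122} is obtained directly from Theorem~\ref{T121} together with the \cite{CFL} non-degeneracy result for radial non-topological solutions when $N_1=1$.

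One correction to your speculation: the hypothesis $p_1\neq\sum_{i=1}^{N_2}q_i$ is \emph{not} used anywhere in the proof---neither in verifying the hypothesis of Theorem~\ref{T121} nor inside its proof (the reduced matrix in Proposition~\ref{propgoal} is positive regardless of the vortex configuration, and the centering condition \eqref{sumqeqc} can always be achieved by a translation). The condition \eqref{condiforvortex} appears in the statement only to single out a configuration not covered by the earlier result of Ao--Lin--Wei \cite{ALW} (see \eqref{alw_con}): when $N_1=1$ and $p_1\neq\sum q_i$, neither branch of \eqref{alw_con} applies, so Corollary~\ref{T122} is genuinely new in that case. In fact Theorem~\ref{T121} combined with \cite{CFL} yields the conclusion for \emph{every} vortex configuration with $N_1=1$.
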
  As far as we know, Corollary \ref{T122} is the only existence result under this particular assumption \eqref{condiforvortex}. For example, in \cite{ALW},  the existence of  non-topological solutions to \eqref{cs4} was proved  when $(\beta_1,\beta_2)$ is closed to $(N_2+2,N_1+2)$ under the following conditions:
\begin{equation}\label{alw_con}\textrm{either}\ \sum_{j=1}^{N_1}p_j=\sum_{j=1}^{N_2}q_j\ \textrm{ or}\ \sum_{j=1}^{N_1}p_j\neq\sum_{j=1}^{N_2}q_j,\ N_1, N_2>1,\ |N_1-N_2|\neq1.\end{equation}

To prove Theorem \ref{T121}, we use different scalings for
different components of the system (\ref{cs4}). To the best of our
knowledge, this idea seems new. In detail,  with a change of variables, we consider the following
functions.
\begin{equation}  \label{1.5}
u_{1,\varepsilon}\left( x\right) :=v_{1,\varepsilon}\left( x\right)\ \ \mbox{ and} \ \
u_{2,\varepsilon}\left( x\right) :=v_{2,\varepsilon}\left( \frac{x}{\varepsilon}\right)
-2\ln \varepsilon.
\end{equation}
Then $(v_{1,\varepsilon}(x),v_{2,\varepsilon}(x))$ is a solution of (\ref{cs4}) if and only if $\left(
u_{1,\varepsilon},u_{2,\varepsilon}\right) $ is a solution of
\begin{equation}
\left\{
\begin{array}{l}
\Delta u_{1,\varepsilon}+2
e^{u_{1,\varepsilon}}\left(1-2e^{u_{1,\varepsilon}}\right)  =\varepsilon^{2}
e^{u_{2,\varepsilon}\left( \varepsilon x\right)
}\left(1-2\varepsilon^{2}e^{u_{2,\varepsilon}\left( \varepsilon x\right)
}-e^{u_{1,\varepsilon}(x) }\right)
+4\pi \sum_{j=1}^{N_{1}}\delta_{p_{j}},  \label{2} \\
\Delta u_{2,\varepsilon}+2 e^{u_{2,\varepsilon}}\left(1-2\varepsilon
^{2}e^{u_{2,\varepsilon}}\right)  = \frac{ e^{u_{1,\varepsilon}\left( \frac{x%
}{\varepsilon}\right) }(1-2e^{u_{1,\varepsilon}\left( \frac{x}{\varepsilon}%
\right) })}{\varepsilon^{2}} -e^{u_{1,\varepsilon}\left( \frac{x}{\varepsilon}%
\right) +u_{2,\varepsilon}} +4\pi \sum_{j=1}^{N_{2}}\delta_{\varepsilon
q_{j}}.
\end{array}%
\right.
\end{equation}%
Each equation in the  system (\ref{2}) can be
viewed as a perturbation of the following Chern-Simons-Higgs equation and Liouville
equation respectively ($V(x)=v(x)+\ln\frac{1}{2}$):%
\begin{equation}
\left \{
\begin{array}{l}
\Delta V+2e^{V}\left(1 -2e^{V}\right) =4\pi \sum_{j=1}^{N_{1}}\delta_{p_{j}},
\\
\int_{\mathbb{R}^2}2e^{V}\left(1 -2e^{V}\right) dx=4\pi {{\gamma_1}},%
\end{array}
\right.  \label{5}
\end{equation}
\begin{equation}
\left \{
\begin{array}{l}
\Delta W+2e^{W}=4\pi \left( \frac{{{\gamma_1}}}{2}+N_{2}\right) \delta_{0}, \\
\int_{\mathbb{R}^2}2e^{W}dx=8\pi(1+ \frac{{{\gamma_1}}}{2}+N_{2}),%
\end{array}
\right.  \label{6}
\end{equation}
where the extra singular source $\frac{{{\gamma_1}}}{2}\delta_0$ in the second equation appears  due to the
first component. It is well known that the solutions to (\ref{6}) have been
completely classified by Prajapat and Tarantello in \cite{PT} such that%
\begin{equation}
W\left( x\right) =W_{\lambda_1,a}\left( x\right) =\ln \frac{4\left( 1+\frac{%
{{\gamma_1}}}{2}+N_{2}\right) ^{2}e^{\lambda_1}\left \vert x\right \vert
^{{{\gamma_1}}+2N_{2}}}{\left( 1+e^{\lambda_1}\left \vert x^{1+\frac{{{\gamma_1}}}{2}%
+N_{2}}+a\right \vert ^{2}\right) ^{2}},  \label{17}
\end{equation}
where $\lambda_1 \in \mathbb{R}$,  $x =|x|(\cos\theta+i\sin\theta),\  a=a_{1} + i a_{2} \in \mathbb{C}$.  We denote $W_a(x):=W_{0,a}(x)$.

Since  the term  $\frac{  e^{u_{1,\varepsilon}\left( \frac{x}{\varepsilon}%
\right) }\left(1-2e^{u_{1,\varepsilon}\left( \frac{x}{\varepsilon}\right) }\right)}{
\varepsilon^{2}}$ in   \eqref{2}
cannot be simply replaced by a Dirac measure, we
 consider the following   as in \cite{LY}:
\begin{equation}\label{u2epstar}
 \Delta u_{2,\varepsilon}^{*}\left( x\right) =\frac{e^{V(\frac {x}{%
\varepsilon})}(1-2e^{V(\frac{x}{\varepsilon})})}{\varepsilon^{2}}.
\end{equation}
From Lemma \ref{diffgreen}, we have for any constant $R, r>0$, independent of $\varepsilon>0$,
\begin{equation*}
\Vert u_{2,\varepsilon}^{*}(x)-{{\gamma_1}} \ln|x|\Vert_{L^\infty(B_R(0)\setminus B_r(0))}=O(\varepsilon).
\end{equation*}
Due to the term $u_{2,\varepsilon}^{*}(x)$,  there are errors coming from not only the term $%
\prod_{j=1}^{N_2}|x-\varepsilon q_j|^2-|x|^{2N_2}$, but also the term $%
u_{2,\varepsilon}^*-{{\gamma_1}}\ln|x|$.  Indeed, the term $%
u_{2,\varepsilon}^*-{{\gamma_1}}\ln|x|$  gives us the errors including $-\frac{x\cdot \overrightarrow{c}}{|x|^2}\e$ where $\overrightarrow{c}:=\frac{1}{2\pi}\int_{\mathbb{R}^2}
ye^{V(y)}(1-2e^{V(y)})dy$. This is also one of the differences from
the previous works (for example, see \cite{ALW}). From this point of view,
by a shift of origin, without loss of generality, we always assume that
\begin{equation}  \label{sumqeqc}
2\sum^{N_2}_{i=1}q_i+\overrightarrow{c}=0,
\end{equation} to vanish $\varepsilon$-order term in the error
and get a suitable reduced problem.  Even when $N_2=0$,   \eqref{sumqeqc} is still satisfied (see the detail in Section \ref{sectionb}).

As we mentioned before, if ${\gamma_1}>2N_1+2N_2+4$, then  the term $\varepsilon^{2}e^{u_{2,\varepsilon}\left(
\varepsilon x\right) }$ in the first equation of \eqref{2} does not affect the mass of the first component $\int_{\mathbb{R}%
^2}2e^{u_{1,\varepsilon}}(1-2e^{u_{1,\varepsilon}})dx$,  but  the
asymptotic behavior of $u_{1,\varepsilon}$ at infinity is affected by $\varepsilon^{2}e^{u_{2,\varepsilon}\left(
\varepsilon x\right) }$.    So we  consider the following problem: \begin{equation}\label{uqepstar}
\Delta u_{1,\varepsilon}^*=\varepsilon^2e^{W_a(\varepsilon x)}.
\end{equation}
Denote   the regular part of  $W_a$ by $\tilde{W}_a$ and let   $u_{1,\varepsilon}^*(x)=\frac{\tilde{W}_a(0)-\tilde{W}_a(\varepsilon x)}{2}$. Then $u_{1,\e}^*$ controls the effect of  $\varepsilon^2e^{W_a(\varepsilon x)}$ although   $u_{1,\e}^*$ itself is small enough on any bounded subset in $\RN$.
From the above arguments, at this moment,  we write an 0-th order approximate solutions for bubbling type I solutions to \eqref{cs4} such that  \begin{equation}\label{apfortypeI}(V(x)+u_{1,\e}^*(x),\tilde{W}_a(\e x)+2\ln\e+ 2\sum_{j=1}^{N_2}\ln|\e x-\e q_j|+u_{2,\e}^*(\e x)).\end{equation}

However, the above form \eqref{apfortypeI} is not sufficiently accurate to carry out the next further steps. In order to improve the approximate solution to the next order,  we have to study the invertibility of
the linearized operator $\mathbb{QL}_{\varepsilon}^a$  (see \eqref{231}), which is most important part in our paper. The linearized operator $\mathbb{QL}_{\e}^a$ is defined from $%
X^{1}_{\alpha}\times E_{\alpha}$ to $Y_{\alpha}\times
F_{\alpha}$, where $X^{1}_{\alpha}$, $E_{\alpha} (\subseteq
X^{2}_{\alpha} )$, $Y_{\alpha}$, $F_{\alpha}$ are some suitable
weighted function spaces defined in Section 3 (see Definition \ref{D3}). For our linearized problem, we need to  use the different function
spaces $X^1_\alpha$ and $X^2_\alpha$ for different component because a limiting problem for the
first component of   (\ref{2}) is Chern-Simons-Higgs
equation, but a limiting problem for the second component of \eqref{2} is
Liouville equation.
It is worth to note that any function with logarithmic growth at infinity cannot
belong to $X^1_\alpha$ unlike $X^2_\alpha$. The non-degeneracy assumption about the linearized equation of \eqref{cs1} requires that any function with logarithmic growth does not allow to be in the function
space for the first component (see \cite[Theorem 4.1]{CFL}). On the other hand, the function space  for the second component should contain functions with logarithmic growth (see \cite[Lemma 2.2]{CI}).

In order to prove the invertibility of the linearized
operator $\mathbb{QL}_{\e}^a$, there is a major difficulty which is
caused by the different scaling factors for different   components  in \eqref{2}. We overcome this obstacle by using     the Green  representation formula and the property of $X^1_\alpha$.
We   note that the analysis in this paper  would provide an important insight for the studies of  mixed type entire solution in \cite{CKL4, FLL} and  blow up phenomena in various models (for example, see \cite{HL0, HLY, LLTY, LY3, LWZ}).

The organization of this paper is as follows: In Section 2, we prove Theorem \ref{T0}. In Section 3, we
construct  an approximate solution with a suitable change of variables.  In Section 4, we use
contraction mapping theorem,  solve a finite dimensional reduced problem, and complete the proof of Theorem \ref{T121} and Corollary
\ref{T122}.  In Section 5, we prove the invertibility of $\mathbb{QL}%
_{\e}^a$, which is the main part
in this paper. Finally,  we  put some basic estimates in Section \ref{sectionb}.
\section{Proof of Theorem \ref{T0}}
The arguments here will be a motivation to  construct approximate solutions for Theorem \ref{T121}.

\textbf{Proof of Theorem \ref{T0}} Since $\left(
v_{1,\varepsilon},v_{2,\varepsilon}\right) $ satisfies the system
\eqref{cs4}, the assumptions $v_{1,\varepsilon}(x)\to V(x)$ and $v_{2,\varepsilon}\to-\infty$ in $C_{\textrm{loc}}^0(\mathbb{R}^2)$ imply that   $V(x)$ satisfies
  \begin{equation*}
\begin{aligned}\left\{ \begin{array}{ll}  &\Delta V+2
e^{V}\left(1-2e^{V}\right) =4\pi \sum_{j=1}^{N_{1}}\delta_{p_{j}},
\\&\int_{\mathbb{R}^2}2e^{V(x)}\left(1 -2e^{V(x)}\right)dx=4\pi\gamma, \ \gamma>2N_1+2. \end{array}\right. \end{aligned}
\end{equation*}
%We denote  \[2\ln \varepsilon:= {v_{2,\varepsilon}}(0).\] By the assumption of Theorem \ref{T0}, we see that $\varepsilon\to0$.
Recall that  $(u_{1,\varepsilon}\left( x\right),u_{2,\varepsilon}\left( x\right)
)=(v_{1,\varepsilon}\left( x\right), v_{2,\varepsilon}\left( \frac {x}{\varepsilon}\right) -2\ln \varepsilon)$ satifies \eqref{2},  the assumption in Theorem \ref{T0} implies that $(u_{1,\varepsilon} ,u_{2,\varepsilon}
) \to(V ,W )$.

Now we claim that there exists constant $C>0$, independent of $\varepsilon>0$, such that \begin{equation}\int_{\mathbb{R}^2}e^{u_{i,\varepsilon}(x)}dx\le C.\label{eubdd}\end{equation}
After multiplying 2 and 1 (resp. 1 and 2) on both sides of the first and
second (resp. the second and first) equation in \eqref{2}, adding new
equations respectively, and taking the integration on $\mathbb{R}^2$, we get
that
\begin{equation}  \label{1flux}
\begin{aligned}\left\{ \begin{array}{ll}
\lim_{\varepsilon\to0}\int_{\mathbb{R}^2} 2e^{u_{1,\varepsilon}(x)}\left( 1
-2e^{u_{1,\varepsilon}(x)}+\varepsilon^2e^{u_{2,\varepsilon}(\varepsilon
x)}\right) dx=\frac{8\pi}{3}\left(
2N_{1}+2\beta_{1}+ N_{2}+ \beta_{2}\right),\\ \lim_{\varepsilon\to0}\int_{\mathbb{R}^2}
2e^{u_{2,\varepsilon}(x)}\left( 1
-2\varepsilon^2e^{u_{2,\varepsilon}(x)}+e^{u_{1,\varepsilon}(
x/\varepsilon)}\right) dx=\frac{8\pi}{3}\left(
N_{1}+\beta_{1}+2N_{2}+2\beta_{2}\right). \end{array}\right. \end{aligned}
\end{equation}
By the assumption $\sup_{x\in\mathbb{R}^2}(e^{v_{i,\varepsilon}(x)}|x|^2)<C$,
we see that if $|x| \gg1$, \begin{equation}  \label{flux00} (1-2e^{u_{1,\varepsilon}(x)}+\varepsilon^2e^{u_{2,\varepsilon}(\varepsilon x)}), \   (1-2\varepsilon^2e^{u_{2,\varepsilon}(x)}+e^{u_{1,\varepsilon}(x/\varepsilon)})>\frac{1}{2}.\end{equation}
From $(u_{1,\varepsilon}\left( x\right),u_{2,\varepsilon}\left( x\right)
) \to(V(x),W(x))$ and \eqref{1flux}-\eqref{flux00},  we prove  the claim \eqref{eubdd}.

Let \begin{equation}\label{defofft}f(t):=2e^t(1-2e^t)\ \textrm{for}\ t\in\mathbb{R}\ \textrm{and}\ \ \bar{u}_{1,\varepsilon}(|x|):=\frac{1}{2\pi|x|}\int_{\partial B_{|x|}}u_{1,\varepsilon}dS.\end{equation}
If $1\ll R <|x|< \frac{\delta}{\varepsilon}$ where  $R, \delta>0$ are   independent of $\e>0$,  then we see that as $\varepsilon\to0$,
 \begin{equation*}
\begin{aligned}
&2\pi|x|\Big(\frac{d\bar{u}_{1,\varepsilon}}{d|x|}\Big)
=-\int_{B_{|x|}(0)}f(u_{1,\varepsilon}(y)) dy +\int_{B_{\varepsilon|x|}(0)}e^{u_{2,\varepsilon}\left(
y\right) }\left( 1 -2\varepsilon^{2}e^{u_{2,\varepsilon}\left(
 y\right) }-e^{u_{1,\varepsilon}(y/\varepsilon)}\right)dy +4\pi N_1
 \\&\le-\int_{B_{R}(0)}f(u_{1,\varepsilon}(y)) dy+\int_{B_{\delta}(0)}e^{u_{2,\varepsilon}}dy +4\pi N_1
 \\&\to-\int_{B_{R}(0)}f(V(y))dy +O(\delta^2)+4\pi N_1=-4\pi\gamma +O(R^{-2})+O(\delta^2)+4\pi N_1.
  \end{aligned}
\end{equation*}
Since $\gamma>2N_1+2$, we get that   if $R<|x|< \frac{\delta}{\varepsilon}$, then \begin{equation}
\begin{aligned}
& |x|\Big(\frac{d\bar{u}_{1,\varepsilon}}{d|x|}\Big)
\le -(2+\nu)\ \textrm{for some constant} \ \nu>0,\ \textrm{independent of}\ \varepsilon>0.
  \end{aligned}\label{average}
\end{equation}
In view of Green representation formula (for example, see Lemma \ref{w}), there is a constant $c_{1,\varepsilon}$ satisfying $
u_{1,\varepsilon}(x)
=c_{1,\varepsilon}+ \frac{1}{2\pi}\int_{\mathbb{R}^2}\ln|x-y|\Delta u_{1,\varepsilon}(y)dy.
$
Note that if $|x|=|x'|>R$, then
\begin{equation*}\begin{aligned}
&u_{1,\varepsilon}(x)-u_{1,\varepsilon}(x')
\\&=2\sum_{i=1}^{N_1}\ln\frac{|x-p_{j}|}{|x'-p_{j}|}
 +\frac{1}{2\pi}\int_{\mathbb{R}^2}\ln\frac{|x'-y|}{|x-y|}\Big( f(u_{1,\varepsilon} )-\varepsilon^{2}e^{u_{2,\varepsilon}(
\varepsilon y) }( 1 -2\varepsilon^{2}e^{u_{2,\varepsilon}(
\varepsilon y) }-e^{u_{1,\varepsilon}})\Big)dy
\\&=
O\Big(\int_{|y|\le\frac{|x|}{2},|y|\ge2|x|}\Big| f(u_{1,\varepsilon} )-\varepsilon^{2}e^{u_{2,\varepsilon}(
\varepsilon y) }( 1 -2\varepsilon^{2}e^{u_{2,\varepsilon}(
\varepsilon y) }-e^{u_{1,\varepsilon}})\Big|dy\Big)
\\&+\Big(\max_{\frac{|x|}{2}\le|y|\le 2|x|}e^{u_{1,\varepsilon}(y)}+\max_{\frac{|x|}{2}\le|y|\le 2|x|}\varepsilon^{2}e^{u_{2,\varepsilon}(
\varepsilon y) }\Big)\int_{\frac{|x|}{2}\le|y|\le 2|x|}\left(\frac{|x-x'|}{|x-y|}+\frac{|x-x'|}{|x'-y|}\right)dy+O(1).
\end{aligned}\end{equation*}
Since   $\int_{\mathbb{R}^2}e^{u_{i,\varepsilon}(x)}dx<C$ and $\sup_{x\in\mathbb{R}^2}(e^{u_{i,\varepsilon}(x)}|x|^2)<C\ \textrm{for}\ i=1,2,$
if $|x|=|x'|>R$, then
\begin{equation}\label{O}u_{1,\varepsilon}(x)-u_{1,\varepsilon}(x')=O(1).\end{equation}
In view of \eqref{average}-\eqref{O}, we see that  for any fixed small $\delta>0$,
\begin{equation*}
\begin{aligned}&\lim_{\varepsilon\to0}\int_{B_\delta(0)}\frac{f(u_{1,\varepsilon}\left(\frac{x}{\varepsilon}\right) )}{2\varepsilon^{2}}dx=\lim_{\varepsilon\to0}\int_{B_{\frac{\delta}{\varepsilon}}(0)}\frac{f(u_{1,\varepsilon}(x))}{2} dx
\\&=\lim_{\varepsilon\to0}\Big[\int_{B_{R}(0)}\frac{f(u_{1,\varepsilon}(x))}{2}  dx+O\Big(\int_{B_{\frac{\delta}{\varepsilon}}(0)\setminus B_{R}(0)}e^{u_{1,\varepsilon}(x)-\bar{u}_{1,\varepsilon}(x)+\bar{u}_{1,\varepsilon}(x)} dx\Big)\Big]
\\&=\int_{B_{R}(0)}\frac{f(V(x))}{2} dx+O(R^{-\nu})\to2\pi\gamma\ \ \ \textrm{as}\ \ R\to+\infty.
\end{aligned}
\end{equation*}
Then  $u_{2,\varepsilon}\to W$ in $C^0(\mathbb{R}^2)$ implies that  $W$ would satisfy the following equation:
\begin{equation*}
\begin{aligned}
\Delta W+2e^{W(x)}=4\pi \left( N_2+ \frac{\gamma}{2}\right) \delta_{0},\ \ e^W\in
L^1(\mathbb{R}^2).   \end{aligned}
\end{equation*}
Since $W$ satisfies
Liouville equation, we have $\int_{\mathbb{R}^2}2e^{W(x)}dx=8\pi\Big(N_2+\frac{{{\gamma}}}{2}+1\Big).$
We note that  although the component $\varepsilon^{2}e^{u_{2,\varepsilon}\left( \varepsilon
x\right) }\rightarrow0$ in $C^0_{\text{loc}}(\mathbb{R}^2)$ as $\varepsilon\to0
$ in the first equation in \eqref{2},   the total mass $\varepsilon^{2}e^{u_{2,\varepsilon}\left( \varepsilon
x\right) }$ in $\mathbb{R}^2$  converges to a positive value such that
\begin{equation}\label{positivemassatinfty}
\lim_{\varepsilon\to0}\int_{\mathbb{R}^{2}}\varepsilon^{2}e^{u_{2,\varepsilon}\left( \varepsilon
x\right) }dx=\lim_{\varepsilon\to0}\int_{\mathbb{R}^{2}}e^{u_{2,\varepsilon}\left( x\right) }dx=\intr e^{W(x)}dx=
4\pi\Big(N_2+\frac{{{\gamma}}}{2}+1\Big)>0,
\end{equation}
since  $u_{2,\e}(x)=v_{2,\varepsilon}\left( \frac {x}{\varepsilon}\right) -2\ln \varepsilon\to W(x)\ \textrm{in}\  C^0(\mathbb{R}^2)$. Here \eqref{positivemassatinfty} implies that  the mass of   $e^{v_{2,\e}(x)}=\varepsilon^{2}e^{u_{2,\varepsilon}\left( \varepsilon x\right) }$
concentrates at infinity.

(i) We consider the case  ${{\gamma}}>2N_1+2N_2+4$.
 Then we see that  for any large $|x|> R$, as $\varepsilon\to0$,
 \begin{equation*}
\begin{aligned}
 &2\pi{|x|}\Big(\frac{d\bar{u}_{1,\varepsilon}}{d{|x|}}\Big)
 =-\int_{B_{|x|}(0)}f(u_{1,\varepsilon} ) dx +\int_{B_{\varepsilon |x|}(0)}e^{u_{2,\varepsilon} (
x ) } ( 1 -2\varepsilon^{2}e^{u_{2,\varepsilon} (
 x ) }-e^{u_{1,\varepsilon}(x/\varepsilon)} )dx +4\pi N_1,
 \\&\le-\int_{B_{R}(0)}f(u_{1,\varepsilon}) dx+4\pi(N_2+\frac{{{\gamma}}}{2}+1) +4\pi N_1
  \to-\int_{B_{R}(0)}f(V)dx +4\pi(N_1+ N_2+\frac{{{\gamma}}}{2}+1)
\\&=4\pi(N_1+ N_2-\frac{{{\gamma}}}{2}+1)+O(R^{-2})<-4\pi.
  \end{aligned}
\end{equation*}
Therefore, the decay of $e^{u_{1,\varepsilon}(x)}=O(|x|^{-(2 +\nu') })$ for some $\nu'>0$ is fast enough at infinity so that the concentration of $\varepsilon^{2}e^{u_{2,\varepsilon}\left( \varepsilon x\right) }$ at infinity
does not affect the mass of  $\int_{\mathbb{R}%
^2}f(u_{1,\varepsilon})dx$.
Thus we  see
\begin{equation}  \label{flux2}
\begin{aligned}\left\{ \begin{array}{ll}
\lim_{\varepsilon\to0}\int_{\mathbb{R}^2} 2e^{u_{1,\varepsilon}(x)}\left( 1
-2e^{u_{1,\varepsilon}(x)}+\varepsilon^2e^{u_{2,\varepsilon}(\varepsilon
x)}\right) dx=\int_{\mathbb{R}^2}f(V  )dx=4\pi\gamma,\\ \lim_{\varepsilon\to0}\int_{\mathbb{R}^2}
2e^{u_{2,\varepsilon}(x)}\left( 1
-2\varepsilon^2e^{u_{2,\varepsilon}(x)}+e^{u_{1,\varepsilon}(
x/\varepsilon)}\right) dx =\int_{\mathbb{R}^2}2e^{W  }dx=8\pi (N_2+\frac{\gamma}{2}+1 ). \end{array}\right. \end{aligned}
\end{equation}
By  \eqref{1flux},   \eqref{flux2}, and  $u_{2,\varepsilon}\to W$ in $C^0(\mathbb{R}^2)$, we get that
\begin{equation}  \label{magofgamma}
\begin{aligned}\left\{ \begin{array}{ll}
{{\gamma}}= \frac{2}{3}\left(
2N_{1}+2\beta_{1}+N_{2}+\beta_{2}\right)>2N_1+2N_2+4,\\ \beta_{2}-\beta_{1}=N_{1}+2N_{2}+3, \ \beta_i>1. \end{array}\right. \end{aligned}
\end{equation}
(ii) If $2N_1+2<{{\gamma}}<2N_1+2N_2+4$, then  the assumption $\lim_{\e\to0}\int_{\RN}f(v_{1,\e})dx=4\pi(2N_1+2N_2+4)$ and
 \eqref{1flux}  yield
$2\beta_{1}+\beta_{2}=N_{1}+2N_{2}+6, \ \beta_i>1.$
This completes the proof.\hfill$\square$
\section{Approximate solution and Linearized operator}
%In this section, we want to find a good approximate solution  to construct a family of partial blowing up non-topological solutions $\left(
%v_{1,\varepsilon},v_{2,\varepsilon}\right) $  to (\ref{cs4}) such that
%\begin{equation*}
%v_{i,\varepsilon}\left( x\right) =-2\beta_{i,\varepsilon}\ln \left \vert
%x\right \vert +O\left( 1\right) \mbox{ at }\infty,
%\end{equation*}
%where $v_{2,\varepsilon}(x) \to-\infty$ in $C^0_{\textrm{loc}}(\RN)$, and  $\lim_{\varepsilon\to0}(\beta_{1,\varepsilon},\beta_{2,\varepsilon})=(\beta_{1},\beta_2)\in\ell_1$.

\subsection{0-th order approximate solution}\label{0th}
In view of the assumption of Theorem \ref{T121}, we choose a non-degenerated solution $V$ of the  following equation
  \begin{equation}\label{chernsimonseq}
\begin{aligned}\left\{ \begin{array}{ll}  &\Delta V+f(V(x)) =4\pi \sum_{j=1}^{N_{1}}\delta_{p_{j}},
\\&\int_{\mathbb{R}^2}f(V(x))dx=4\pi\gamma_1,\ \ V(x)=(-2\gamma_1+2N_1)\ln|x|+O(1)\ \ \textrm{as}\ \ |x|\to+\infty,\end{array}\right.\end{aligned}
\end{equation}where $\gamma_1   =\frac{2}{3}\left( 2N_{1}+2\beta _{1}+N_{2}+\beta _{2}\right) >2N_1+2N_2+4$ (see \eqref{ga1}). \\
 We also consider the following  Liouville equation: \begin{equation} \label{eqliou}
\begin{aligned}
\Delta W+2e^{W(x)}=4\pi \left(N_{2}+ \frac{{\gamma_1}}{2}\right) \delta_{0},\ \ e^W\in
L^1(\mathbb{R}^2).   \end{aligned}
\end{equation} As mentioned in the introduction (see \eqref{17}), we have
\begin{equation*}
W\left( x\right) =W_{\lambda_1,a}\left( x\right) =\ln \frac{4\left( 1+\frac{%
{\gamma_1}}{2}+N_{2}\right) ^{2}e^{\lambda_1}\left \vert x\right \vert
^{{\gamma_1}+2N_{2}}}{\left( 1+e^{\lambda_1}\left \vert x^{1+\frac{{\gamma_1}}{2}%
+N_{2}}+a\right \vert ^{2}\right) ^{2}}.
\end{equation*}   We choose
 $W\left( x\right) =W_{0,a}\left( x\right) $ (denoted by $W_{a}\left(
x\right) $) where $a=  a_{1}+i a_{2}$ would be determined later.
For the regular part for $W_a$, we use the following notation:  \begin{equation}
 \tilde{W}_a(x):=W_a(x)-({\gamma_1}+2N_2)\ln|x|.\label{regularpart}
\end{equation}
As mentioned in the introduction (see \eqref{u2epstar} and \eqref{uqepstar}),  we need to   consider the following problem: \begin{equation}\label{dirac}
\Delta u_{1,\varepsilon}^*=\varepsilon^2e^{W_a(\varepsilon x)}\quad\mbox{and}%
\quad \Delta u_{2,\varepsilon}^*=\frac{f(V(x/\varepsilon))}{2\varepsilon^2},
\end{equation}
Let
$
u_{1,\varepsilon}^*(x):=\frac{\tilde{W}_a(0)-\tilde{W}_a(\varepsilon x)}{2},$ and
\begin{equation}\label{definitionofu2e}
\begin{aligned}u_{2,\varepsilon}^*(x):=\left\{ \begin{array}{ll} -\frac{V ( \frac{x}{\varepsilon} )-2\ln|\frac{x}{\varepsilon}-p_1|}{2}+
\gamma_1\ln\varepsilon+\frac{I_{p_1}}{2} \ \textrm{if}\ \ N_1=1,
\\ \frac{1}{2\pi}\int_{\mathbb{R}^2}\ln|x-y|\frac{
e^{V(y/\varepsilon)}(1-2e^{V(y/\varepsilon)})}{\varepsilon^2}dy\ \ \textrm{if}\ \ N_1\ge2,\end{array}\right. \end{aligned}
\end{equation}
where  if $N_1=1$, then we choose  a (non-degenerate)  solution $V$  of \eqref{chernsimonseq} which is a radial symmetric with respect to $p_1$ and satisfies
$V(x)=(-2\gamma_1+2)\ln|x -p_{1} |+I_{p_1}+O(|x -p_1|^{-\sigma})$ for $|x |\gg1$ (see \cite[Lemma 2.6]{CFL}) where $\sigma>0$.   Here  we define $u_{2,\e}^*$ depending on $N_1$ to make  $|y|^6|f(V(y)|$ integrable in the proof of Lemma \ref{diffgreen}.
We note that  $u_{i,\varepsilon}^*$ satisfies \eqref{dirac} for $i=1,2$.  Moreover,  by the definition of $\tilde{W}_a$, we see that $u_{1,\varepsilon}^*\to0\ \ \textrm{in}\ \  C^0_{\textrm{loc}}(\mathbb{R}^2).$
We also note that   Lemma \ref{diffgreen} gives us $\|u_{2,\varepsilon}^*-{\gamma_1}%
\ln|x|\|_{C_{\text{loc}}(\mathbb{R}^2\setminus\{0\})}=O(\varepsilon).$

Now we define the $0$-th order approximate solution  for bubbling type I to \eqref{cs4}  such that \begin{equation}
\begin{aligned}\left\{ \begin{array}{ll}\label{approximatesolutionforbubblingI}
U_{1,\varepsilon}^{a}\left( x\right) & :=V\left( x\right)
 +u_{1,\varepsilon}^*(x) =V\left( x\right) +\frac{1}{2}\left( \tilde{W}%
_{a}\left( 0\right) -\tilde{W}_{a}\left( \varepsilon x\right) \right), \\
U_{2,\varepsilon}^{a}\left( x\right) & :=\tilde{W}_a\left(\e  x\right)+2\ln\e
+2\sum_{j=1}^{N_{2}}\ln \left \vert \e x-\varepsilon q_{j}\right \vert
+u_{2,\varepsilon}^{*}(\e x).
\end{array}\right. \end{aligned}
\end{equation}
By  the definition of ${\gamma_1}$, $\beta_2-\beta_1=N_1+2N_2+3$, and Lemma \ref{diffgreen},  it is easy to see that
\begin{equation}\label{rmk0}
\begin{aligned}\left\{ \begin{array}{ll} U_{1,\varepsilon}^{a}\left(
x\right) &=-2\beta_1\ln|x|+O(1)
\ \mbox{ at }\infty, \\ U_{2,\varepsilon}^{a}\left( x\right) &=-2\beta_2\ln|x|+O(1)
\ \mbox{ at }\infty.
\end{array}\right. \end{aligned}
\end{equation}
However,   as mentioned in the introduction,  we need to improve the approximate solution to the  next    order such that   \[\left( V_{1,\varepsilon}^{a}, V_{2,\varepsilon}^{a}\right)=\left( U_{1,\varepsilon}^{a}+\phi_{1,\varepsilon}^a,U_{2,\varepsilon}^{a}+\phi_{2,\varepsilon}^a\right),\]
where $(\phi^a_{1,\e},\phi^a_{2,\e})$ satisfies \eqref{26}.  Then
we have to study
the invertibility of the linearized operator, which is the most important part in our paper.
\subsection{Linearized operator}\label{linearizedoperator}
In order to define the suitable function space for the linearized operator,   let
\begin{equation}
\begin{aligned}\left\{ \begin{array}{ll}\label{z0}
Z_{0}(x):=\frac{\partial W_{\lambda_1,a}}{\partial \lambda_1}\Big|_{\lambda_1=0,a=0}
%=\frac{1-e^{\lambda_1}|x^{N_{2}+\frac{\gamma_1}{2}+1}+a|^2}{1+e^{\lambda_1}|x^{N_{2}+\frac{\gamma_1}{2}+1}+a|^2}\Big|_{\lambda_1=0,a=0}
=\frac{1- |x|^{2N_{2}+\gamma_1+2}}{1+|x|^{2N_{2}+\gamma_1+2}},
\\ Z_{1}(x):=-\frac{1}{4}\frac{\partial W_{\lambda_1,a}}{\partial a_1}\Big|_{\lambda_1=0,a=0}
%=\frac{e^{\lambda_1}(|x|^{N_{2}+\frac{{\gamma_1}}{2}+1}\cos[(N_{2}+\frac{{\gamma_1}}{2}+1)\theta]+a_1)}{1+|x^{N_{2}+\frac{\gamma_1}{2}+1}+a|^2}\Big|_{\lambda_1=0,a=0}
=\frac{ |x|^{N_{2}+\frac{{\gamma_1}}{2}+1}\cos[(N_{2}+\frac{{\gamma_1}}{2}%
+1)\theta] }{1+|x|^{2N_{2}+\gamma_1+2}},
\\ Z_{2}(x):=-\frac{1}{4}\frac{\partial W_{\lambda_1,a}}{\partial a_2}\Big|_{\lambda_1=0,a=0}
%=\frac{e^{\lambda_1}(|x|^{N_{2}+\frac{{\gamma_1} }{2}+1}\sin[(N_{2}+\frac{{\gamma_1}}{2}+1)\theta]+a_2)}{1+|x^{N_{2}+\frac{\gamma_1}{2}+1}+a|^2}\Big|_{\lambda_1=0,a=0}
=\frac{ |x|^{N_{2}+\frac{{\gamma_1}}{2}+1}\sin[(N_{2}+\frac{{\gamma_1}}{2}%
+1)\theta] }{1+|x|^{2N_{2}+\gamma_1+2}}.
\end{array}\right. \end{aligned}
\end{equation}
 We denote by $\mathbb{N}$ the set of positive integers, and introduce weighted function spaces as follows:
\begin{definition}
\label{D3}For any $0<\alpha<\min\left\{2\Big(N_2+\frac{\gamma_1-2}{2}\Big),\frac{1}{3}\right\}$,

(a) A function $w(x)$ is said to be in $X_{\alpha}^{i}$ if
\begin{equation*}
\begin{aligned} \|w \|^2_{X^i_\alpha}= \int_{\mathbb{R}^2}|\Delta
w|^2(1+|x|)^{2+\alpha}+(w(x)\rho_i(x))^2dx<+\infty, \  \ i=1,2\end{aligned}.
\end{equation*}where $
\rho_{1}(x):=(1+|x|)^{-1}\big(\ln(2+|x|)\big)^{-1-\frac{\alpha}{2}},\ \rho_{2}(x):=(1+|x|)^{-1-\frac{\alpha}{2}}.$

(b) A function $h(x)$ is said to be in $Y_{\alpha}$ if
$$\|h \|_{Y_\alpha}^2 =\int_{\mathbb{R}^2}
h^2(x)(1+|x|)^{2+\alpha}dx<+\infty.$$

(c) $
{E}_{\alpha}:= \left \{
\begin{array}{l}
\{\eta \in X_{\alpha}^{2}\ |\ \int_{\mathbb{R}^{2}}(-\Delta
Z_{i}+2e^{W_{0}}Z_{i})\eta dx=0,\ i=0,1,2\} \ \ \mbox{if}\ \frac{{\gamma_1}}{2}%
\in\mathbb{N}, \\
\{\eta \in X_{\alpha}^{2}\ |\ \int_{\mathbb{R}^{2}}(-\Delta
Z_{0}+2e^{W_{0}}Z_{0})\eta dx=0\} \ \ \mbox{if}\ \frac{{\gamma_1}}{2}\notin%
\mathbb{N}.%
\end{array}
\right.
$

(d) $
{F}_{\alpha}:= \left \{
\begin{array}{l}
\{h\in Y_{\alpha}\ |\ \int_{\mathbb{R}^{2}}hZ_{i}dx=0,\ \ i=1,2\} \ \ %
\mbox{if}\ \frac{{\gamma_1}}{2}\in\mathbb{N}, \\
Y_{\alpha} \ \ \mbox{if}\ \frac{{\gamma_1}}{2}\notin\mathbb{N}.%
\end{array}
\right.
$
\end{definition}

Obviously,   $X_{\alpha}^{i}$ $(i=1,2)$,  $Y_{\alpha}$ are Hilbert
spaces. Here we see that $Z_{0},\ Z_{1},\ Z_{2}\in X^2_\alpha$. In  \cite{CI,CL,DEM}, it has been known that the kernel of  $\mathfrak{L}_0 u=\Delta u+2e^{W_0(x)}u$ in $X_\alpha^2$ is
\begin{equation}
\left \{\label{kerinx2}
\begin{array}{l}
\mbox{ker}\ \mathfrak{L}_0= \mbox{Span} \{Z_{0}, Z_{1}, Z_{2}\} \ \ \mbox{if}\ \frac{{\gamma_1}}{2}\in\mathbb{N}, \\
\mbox{ker}\ \mathfrak{L}_0= \mbox{Span} \{Z_{0}\} \ \ \mbox{if}\ \frac{{\gamma_1}}{2}\notin\mathbb{N}.
\end{array}
\right.
\end{equation}Since $\gamma_1>2N_1+2$, we also see that  if $0<\alpha<\min\left\{2\Big(N_2+\frac{\gamma_1-2}{2}\Big),\frac{1}{3}\right\}$, then   $Z_{1},\ Z_{2}\in Y_\alpha$, but $Z_{0}\notin Y_\alpha$.

Now we define the
operator ${Q}:Y_{\alpha}\rightarrow {F}_{\alpha}$ by%
\begin{equation}\label{proj_q}
{Q}h:= \left \{
\begin{array}{l}
h-\sum_{i=1}^{2}c_{i}Z_{i},\ \mbox{where}\  \{c_{i}\}_{i=1,2}\ \mbox{are chosen so that}\ {Q}h\in F_\alpha\
\mbox{if}\ \frac{{\gamma_1}}{2}\in\mathbb{N}, \\
h \ \ \mbox{if}\ \frac{{\gamma_1}}{2}\notin\mathbb{N}.%
\end{array}
\right.
\end{equation}
Define
$
\mathbb{QL}_{\e}^{a}:X_{\alpha}^{1}\times
{E}_{\alpha}\rightarrow Y_{\alpha}\times {F}_{\alpha}
$
by%
\begin{equation}\label{231}
\mathbb{QL}_{\varepsilon}^{a}(w_{1},w_{2}):=\left( \mathbb{L}%
_{\varepsilon}^{a,1}\left( w_{1},w_{2}\right) ,{Q}\mathbb{L}%
_{\varepsilon}^{a,2}\left( w_{1},w_{2}\right) \right),
\end{equation} and $
\mathbb{L}_{\varepsilon}^{a}\left( w_{1},w_{2}\right) :=\left( \mathbb{%
L}_{\e}^{a,1}\left( w_{1},w_{2}\right) ,\mathbb{L}%
_{\e}^{a,2}\left( w_{1},w_{2}\right) \right),
$ where
\begin{equation}
\begin{aligned}\left\{ \begin{array}{ll}\label{241}
\mathbb{L}_{\e}^{a,1}\left( w_{1},w_{2}\right) :=\Delta w
_{1}\left( x\right) +f^{\prime}\left( V\left( x\right) \right)
w_{1}\left( x\right) -\varepsilon^{2}e^{W_{a}\left( \varepsilon x\right)
}w_{2}\left( \varepsilon x\right),
\\
\mathbb{L}_{\e}^{a,2}\left( w_{1},w_{2}\right) :=\Delta w
_{2}\left( x\right) +2e^{W_{0}\left( x\right) }w_{2}\left( x\right) -%
\frac{f^{\prime}\left( V\left( \frac{x}{\varepsilon}\right) \right) }{%
2\varepsilon^{2}}w_{1}\left( \frac{x}{\varepsilon}\right).
\end{array}\right. \end{aligned}
\end{equation}
Now we state our main result in this paper.
\begin{theorem}
\label{thma} If $a=a(\e)\to0$ as $\e\to0$, then  for small $\e>0$, the map
$
\mathbb{QL}_{\e}^{a}:X_{\alpha}^{1}\times
{E}_{\alpha}\rightarrow Y_{\alpha}\times {F}_{\alpha}
$
is isomorphism. Moreover, for any $(w_{1},w_{2})\in
X_{\alpha}^{1}\times {E}_{\alpha}$ and $(h_{1},h_{2})\in
Y_{\alpha}\times {F}_{\alpha}$ satisfying
$
\mathbb{QL}_{\e}^{a}(w_{1},w_{2})=(h_{1},h_{2}),
$
  there is a constant $C>0$, independent of $\varepsilon>0$, such
that
\begin{equation}\label{importantineq}
|\ln \e|^{-1}\left(\Vert w_{1}\Vert_{L^{\infty}(\mathbb{R}^{2})}+ \Big\|\frac{w_2(x)}{\ln(2+|x|)}\Big\|_{L^\infty(\mathbb{R}^2)}\right)+\sum_{i=1}^2\Vert w_i(x)\rho_2(x)\Vert_{L^2(\mathbb{R}^2)}\leq C\left(\Vert h_{1}\Vert_{Y_{\alpha}}+\Vert h_{2}\Vert_{Y_{\alpha}}\right).
\end{equation}
\end{theorem}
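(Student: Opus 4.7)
\textbf{Proof proposal for Theorem \ref{thma}.} The plan is to first establish the a priori estimate \eqref{importantineq} by a contradiction/blow-up argument using the two scales inherent in $\mathbb{QL}_\varepsilon^a$, and then upgrade it to an isomorphism via the Fredholm alternative. Assume \eqref{importantineq} fails, so there exist sequences $\varepsilon_n\to 0$, $a_n\to 0$, $(w_{1,n},w_{2,n})\in X_\alpha^1\times E_\alpha$, and $(h_{1,n},h_{2,n})=\mathbb{QL}_{\varepsilon_n}^{a_n}(w_{1,n},w_{2,n})$ with the left-hand side of \eqref{importantineq} equal to $1$ while $\|h_{1,n}\|_{Y_\alpha}+\|h_{2,n}\|_{Y_\alpha}\to 0$.

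Next I would extract limits in each of the two scales separately. On compact subsets of $\mathbb{R}^2$ the coupling $\varepsilon_n^2 e^{W_{a_n}(\varepsilon_n x)}w_{2,n}(\varepsilon_n x)$ appearing in $\mathbb{L}_{\varepsilon_n}^{a_n,1}$ tends to $0$ pointwise, because the normalization forces $|w_{2,n}(y)|\leq C|\ln\varepsilon_n|\ln(2+|y|)$ and the factor $\varepsilon_n^2 e^{W_{a_n}(\varepsilon_n x)}$ is exponentially small in $n$ on any compact set. Elliptic regularity and the bound $\|w_{1,n}\|_{L^\infty}\leq |\ln\varepsilon_n|$ give, after renormalization, a limit $w_1^\star\in L^\infty(\mathbb{R}^2)$ satisfying $\Delta w_1^\star+f'(V)w_1^\star=0$; the non-degeneracy assumption built into Theorem~\ref{T121} (Definition~\ref{D1}) forces $w_1^\star\equiv 0$. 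Dually, in the $W_0$-scale the cross term $\frac{f'(V(\cdot/\varepsilon_n))}{2\varepsilon_n^2}w_{1,n}(\cdot/\varepsilon_n)$ concentrates as a measure at the origin whose $L^1$-mass is controlled by $\|w_{1,n}\|_{L^\infty}$, so off $\{0\}$ it vanishes in the limit. The limit $w_2^\star$ of $w_{2,n}$ therefore solves $\Delta w_2^\star+2e^{W_0}w_2^\star=0$ in $\mathbb{R}^2\setminus\{0\}$, and the growth bound $|w_2^\star|\leq C\ln(2+|x|)$ together with a removable-singularity argument places $w_2^\star\in X_\alpha^2$. By the kernel description \eqref{kerinx2} and the orthogonality built into $E_\alpha$ via $Q$, we get $w_2^\star\equiv 0$.

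The core obstacle is Step~3: converting these local vanishing statements into vanishing of the full normalized norm. The idea is to use the Green representation
\begin{equation*}
w_{i,n}(x)=c_{i,n}+\frac{1}{2\pi}\int_{\mathbb{R}^2}\ln|x-y|\,\Delta w_{i,n}(y)\,dy,\qquad i=1,2,
\end{equation*}
and substitute $\Delta w_{i,n}$ from $\mathbb{QL}_{\varepsilon_n}^{a_n}(w_{1,n},w_{2,n})=(h_{1,n},h_{2,n})$. Crucially, membership $w_{1,n}\in X_\alpha^1$ forbids logarithmic growth at infinity for the first component; this pins down $c_{1,n}$ and forces a cancellation between the $\ln|x|$-coefficients coming from $\int f'(V)w_{1,n}$, from $\int h_{1,n}$, and from the cross term $\int \varepsilon_n^2 e^{W_{a_n}(\varepsilon_n y)}w_{2,n}(\varepsilon_n y)\,dy$. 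After rescaling, the last integral is $\int 2e^{W_0(z)}w_{2,n}(z)\,dz$, whose size is $O(|\ln\varepsilon_n|)$ because of the logarithmic bound on $w_{2,n}$; this is the origin of the prefactor $|\ln\varepsilon_n|^{-1}$ in \eqref{importantineq}. The same strategy, applied in the $W_0$-scale and using the $Q$-projection to eliminate the dangerous modes $Z_1,Z_2$ when $\gamma_1/2\in\mathbb{N}$, controls $w_{2,n}/\ln(2+|x|)$. Coupled with the weighted $L^2$ part of the norm (handled by multiplying each equation by $w_{i,n}\rho_2^2$ and integrating, using $Z_0\notin Y_\alpha$ to absorb borderline terms), one obtains $\|(w_{1,n},w_{2,n})\|_{\mathrm{LHS}}\to 0$, contradicting the normalization. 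This gives \eqref{importantineq}.

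Finally, the a priori bound \eqref{importantineq} immediately implies injectivity. For surjectivity one observes that $\mathbb{QL}_\varepsilon^a$ is a compact perturbation of the block-diagonal operator $(\Delta+f'(V),\;Q\circ(\Delta+2e^{W_0}))$ on $X_\alpha^1\times E_\alpha$; the first block is invertible to $Y_\alpha$ by the non-degeneracy of $V$, and the second block is an isomorphism onto $F_\alpha$ by \eqref{kerinx2} together with the definition of $E_\alpha$ and $Q$ in \eqref{proj_q}. Stability of the Fredholm index under compact perturbation, combined with the injectivity from \eqref{importantineq}, promotes $\mathbb{QL}_\varepsilon^a$ to an isomorphism for all sufficiently small $\varepsilon$, completing the proof. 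The hardest technical point is the two-scale Green's-function bookkeeping in Step~3, where the interaction between the space $X_\alpha^1$ (no logarithmic growth allowed) and the space $X_\alpha^2\supseteq E_\alpha$ (logarithmic growth allowed) dictates the precise $|\ln\varepsilon|^{-1}$ loss.
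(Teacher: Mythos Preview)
Your overall architecture matches the paper: contradiction with normalized LHS, extract limits at the two scales, use non-degeneracy of $V$ and the kernel description of $\Delta+2e^{W_0}$, then Fredholm for surjectivity. The Green-representation idea and the observation that $w_{1,n}\in X_\alpha^1$ forces the vanishing integral identity $\int\big(f'(V)w_{1,n}-e^{W_a}w_{2,n}-h_{1,n}\big)=0$ are exactly the paper's \eqref{v1}.

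There is, however, a real gap at the point where you invoke non-degeneracy. You write ``after renormalization, a limit $w_1^\star\in L^\infty$'', but renormalizing by $|\ln\e_n|$ only gives $w_{1,n}/|\ln\e_n|\to 0$ locally and hence $\|f'(V)w_{1,n}\|_{Y_\alpha}=o(|\ln\e_n|)$; plugging this into the Green-representation bounds for $w_{2,n}$ produces errors of order $o(|\ln\e_n|)\cdot|\ln\e_n|$, which does not close. The paper does \emph{not} renormalize: it passes to the limit directly (compactness comes from the weighted $L^2$ bound $\|w_{1,n}\rho_2\|_{L^2}\le 1$) and must then show the \emph{un-renormalized} limit $w_1$ lies in $L^\infty(\RN)$. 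The single identity from $X_\alpha^1$ only gives $\int f'(V)w_1=\int e^{W_0}w_2$, which a priori need not vanish. The missing ingredient is a \emph{second} identity, obtained by testing the second equation against $Z_0$ and using the integration-by-parts lemma (the paper's \eqref{st1}--\eqref{st3}): this yields $\int\big(2e^{W_0}w_2-f'(V)w_1\big)=0$. Combining the two gives $\int f'(V)w_1=\int e^{W_0}w_2=0$, whence $w_1\in L^\infty$, non-degeneracy applies, and $\|f'(V)w_{1,n}\|_{Y_\alpha}\to 0$. This last fact is also what you need for your removable-singularity step for $w_2^\star$ at the origin (cf.\ \eqref{smallarea}); without it the concentrating source $\tfrac{1}{2\e^2}f'(V(\cdot/\e))w_{1,n}(\cdot/\e)$ could leave a nonzero multiple of $\delta_0$ in the limit equation.

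Two smaller remarks. The factor $|\ln\e|^{-1}$ does not come from the size of $\int e^{W_0}w_{2,n}$ as you suggest, but from the Green-kernel estimates across the two scales: once $\|f'(V)w_{1,n}\|_{Y_\alpha}\to 0$ and $\|e^{W_0}w_{2,n}\|_{Y_\alpha}\to 0$ are established, the representation formula on the ranges $|x|\le 2\e$, $2\e\le |x|\le 2$, $|x|\ge 2$ (resp.\ $|x|\le 2$, $2\le|x|\le 2/\e$, $|x|\ge 2/\e$) produces the logarithms of the scale ratios; see \eqref{convt1}--\eqref{convtw2}. Also, the paper does not control the weighted $L^2$ part by pairing with $w_{i,n}\rho_2^2$; it simply reads it off from the pointwise Green-representation bounds, so the energy step you sketch is unnecessary.
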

 Since the proof is long and technical, we postpone it in Section \ref{sectiona}.
 We note that   even when $\Vert w_i\Vert_{X^i_\alpha}=O(1)$,     $\Vert\frac{f'(V(x/\varepsilon))w_1(x/\varepsilon)}{\varepsilon^2}\Vert_{Y_\alpha}=O(\varepsilon^{-1})$ and
 $\Vert\varepsilon^2e^{W_a(\varepsilon x)}w_2(\varepsilon)\Vert_{Y_\alpha}=O(\varepsilon^{-\frac{\alpha}{2}})$  are too big.
 To overcome this difficulty caused by the large norm of   the perturbations terms, we shall use   Green representation formula and the property of $X^1_\alpha$.
 %Moreover, to control the error parts containing $e^{w_i}$, we also need to  consider pointwise estimation although it causes  $(\mathbb{QL}_{\varepsilon}^{a})^{-1}=O(\e^{-1})$, which is big.

\begin{corollary}
\label{magofc} Let $\frac{{\gamma_{1}}}{2}\in\mathbb{Z}$. Then for any $%
(h_{1},h_{2})\in Y_{\alpha}\times F_{\alpha}$, there exist $%
(w_{1},w_{2})\in X_{\alpha}^{1}\times E_{\alpha}$ and $\left(
c_{1},c_{2}\right) \in \mathbb{R}^{2}$ such that
\begin{equation}
\begin{aligned}\left\{\label{withc} \begin{array}{ll}
\mathbb{L}_{1,\varepsilon}^{a}(w_{1},w_{2})=h_{1},\\
\mathbb{L}_{2,\varepsilon}^{a}(w_{1},%
w_{2})=h_{2}+c_{1}Z_{1}+c_{2}Z_{2}, \end{array}\right. \end{aligned}
\end{equation}
and $c_i=O(\varepsilon^{2-\frac{\alpha}{2}}\|w_1\rho_2\|_{L^2(\RN)})$.
\end{corollary}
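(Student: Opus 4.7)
The existence of $(w_1,w_2)$ is essentially a direct application of Theorem \ref{thma}. Given $(h_1,h_2) \in Y_\alpha \times F_\alpha$, invert $\mathbb{QL}_\varepsilon^a$ to produce $(w_1,w_2) \in X_\alpha^1 \times E_\alpha$ with $\mathbb{QL}_\varepsilon^a(w_1,w_2) = (h_1,h_2)$. The first component gives $\mathbb{L}_\varepsilon^{a,1}(w_1,w_2) = h_1$ directly, while the second reads $Q\mathbb{L}_\varepsilon^{a,2}(w_1,w_2) = h_2$. By the definition \eqref{proj_q} of $Q$ there exist unique constants $c_1,c_2 \in \mathbb{R}$ with $\mathbb{L}_\varepsilon^{a,2}(w_1,w_2) - c_1 Z_1 - c_2 Z_2 \in F_\alpha$, and that $F_\alpha$-element must then equal $h_2$. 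This yields \eqref{withc}.

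To estimate $c_1, c_2$, test the second line of \eqref{withc} against $Z_k$, $k=1,2$, and integrate over $\mathbb{R}^2$. Because $h_2 \in F_\alpha$ the right-hand side collapses to $\sum_i c_i \int Z_i Z_k\,dx$; the Gram matrix $\left(\int Z_i Z_k\,dx\right)_{i,k=1,2}$ is non-singular by the orthogonality of the angular modes $\cos\!\big[(N_2+\tfrac{\gamma_1}{2}+1)\theta\big]$ and $\sin\!\big[(N_2+\tfrac{\gamma_1}{2}+1)\theta\big]$ appearing in \eqref{z0}, so each $c_i$ is controlled by a linear combination of the integrals $\int \mathbb{L}_\varepsilon^{a,2}(w_1,w_2) Z_k\,dx$. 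On the left, use $\mathfrak{L}_0 Z_k = 0$ from \eqref{kerinx2} and integrate by parts to annihilate the $(\Delta + 2 e^{W_0})$-part of $\mathbb{L}_\varepsilon^{a,2}(w_1,w_2)$. The boundary terms at infinity vanish because $|Z_k(x)| \lesssim |x|^{-(N_2 + \gamma_1/2+1)}$ with $N_2+\gamma_1/2+1 \geq 2$, while $w_2 \in X_\alpha^2$ grows at worst logarithmically. After the change of variables $x = \varepsilon y$, what remains is
\[
\sum_{i=1}^2 c_i \int Z_i Z_k\,dx \;=\; -\frac{1}{2}\int_{\mathbb{R}^2} f'(V(y))\, w_1(y)\, Z_k(\varepsilon y)\,dy.
\]

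The final step is Cauchy--Schwarz: the right-hand side is bounded by $C\|w_1 \rho_2\|_{L^2}\,\big\|f'(V(y)) Z_k(\varepsilon y) \rho_2(y)^{-1}\big\|_{L^2}$. To estimate the second factor, use the uniform bounds $|Z_k(\varepsilon y)| \leq C\min\!\big((\varepsilon|y|)^{N_2+\gamma_1/2+1},(\varepsilon|y|)^{-(N_2+\gamma_1/2+1)}\big)$ and $|f'(V(y))| \leq C e^{V(y)} \leq C(1+|y|)^{-2(\gamma_1-N_1)}$. Splitting the integral into $\{|y|\leq 1/\varepsilon\}$ and $\{|y|>1/\varepsilon\}$: on the first region pull out $(\varepsilon|y|)^2$ from $Z_k$ (using $N_2+\gamma_1/2+1\geq 2$) and integrate a function that is uniformly $L^1$ thanks to the strong decay of $f'(V)$ and the hypothesis $\gamma_1 > 2N_1+2N_2+4$; on the second region the decay of $Z_k$ together with $\gamma_1 > N_1+2$ handles the $\varepsilon^{-2}$ from the upper bound of $Z_k$. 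This yields $\|f'(V) Z_k(\varepsilon\cdot)\rho_2^{-1}\|_{L^2} = O(\varepsilon^2)$, which is in fact stronger than the claimed $O(\varepsilon^{2-\alpha/2})$.

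The only subtle step is the justification of integration by parts: one must check that $w_2 \nabla Z_k - Z_k \nabla w_2$ has vanishing flux at infinity, which follows from the decay of $Z_k$ and its gradient combined with the logarithmic control on $w_2$ implied by $w_2 \in X_\alpha^2$. Once this is done, the remainder of the argument is routine.
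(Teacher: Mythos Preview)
Your proof is correct and follows essentially the same route as the paper's: invoke Theorem \ref{thma} for existence, test the second equation against $Z_k$, use $\mathfrak{L}_0 Z_k=0$ and integration by parts (the paper packages this step as Lemma \ref{integrationbyparts}, proved via a cutoff argument rather than a direct flux computation), then apply Cauchy--Schwarz with weight $\rho_2$ and split at $|y|=\varepsilon^{-1}$. The paper keeps the full power $|\varepsilon y|^{N_2+\gamma_1/2+1}$ in $Z_k$ rather than bounding it by $|\varepsilon y|^2$, which yields the slightly sharper $O(\varepsilon^{N_2+\gamma_1/2+1}+\varepsilon^{2\gamma_1-2N_1-2-\alpha/2})$ before relaxing to the stated $O(\varepsilon^{2-\alpha/2})$, but your cruder extraction already suffices.
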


\begin{proof}
In view of Theorem \ref{thma}, for any $(h_{1},h_{2})\in Y_{\alpha}\times
F_{\alpha}$, we get $(w_{1},w_{2})\in X_{\alpha}^{1}\times
E_{\alpha}$ and $\left( c_{1},c_{2}\right) \in \mathbb{R}^{2}$
satisfying the linear problem \eqref{withc}.

By   Lemma \ref{integrationbyparts},  $h_{2}\in F_{\alpha}$, and \eqref{withc}, we see that for $i\neq j$,
\begin{equation*}
\begin{aligned} &\Big|c_{i}\int_{\mathbb{R}^2} Z_{i}^2(x)dx+c_{j}\int_{\mathbb{R}^2} Z_{i}(x)Z_{j}(x)dx\Big|
=\Big|\int_{\mathbb{R}^2}
\frac{f'(V(x/\varepsilon))}{2\varepsilon^2}w_{1}(x/\varepsilon)Z_{i}(x)dx%
\Big|
\\&\le\frac{1}{2}
\|w_{1}\rho_2\|_{L^2(\mathbb{R}^2)} \Big\|| f'(V(x))\rho_2^{-1}(x)|\frac{|\varepsilon
x|^{N_2+\frac{{\gamma_1}}{2}+1}  }{1+|\varepsilon x|^{2N_2+\gamma_1+2}}\Big\|_{L^2(\RN)}
\\&\le O(
\|w_{1}\rho_2\|_{L^2(\mathbb{R}^2)})\Big(\|
(1+| x|)^{-2\gamma_1+2N_1+1+\frac{\alpha}{2}}|\varepsilon
x|^{N_2+\frac{{\gamma_1}}{2}+1}\|_{L^2(| x | \le\varepsilon^{-1})}
\\&\ +\|(1+| x|)^{-2\gamma_1+2N_1+1+\frac{\alpha}{2}}\|_{L^2(\e^{-1}\le| x |)}
\le O
(\varepsilon^{N_2+\frac{{\gamma_1}}{2}+1} +\e^{2\gamma_1-2N_1-2-\frac{\alpha}{2}})\|w_1\rho_2\|_{L^2(\RN)}.\end{aligned}
\end{equation*}
 Since $\int_{\RN} Z_{j}Z_{i}dx=\left(\int_{\RN} Z_{i}^2dx\right)\delta_{i,j}$,
 we get that  $c_i=O(\varepsilon^{2-\frac{\alpha}{2}}\|w_1\rho_2\|_{L^2(\RN)})$  for  $i=1,2$. This completes the proof of Corollary \ref{magofcc}.
\end{proof}

\subsection{Improved $1$-st order approximate solution}\label{1st}
Now we are going to improve our approximate solution  in the form  of $\left(
V_{1,\varepsilon}^{a},V_{2,\varepsilon}^{a}\right)=\left({U}_{1,\varepsilon}^{a}+
\phi_{1,\varepsilon}^{a},{U}_{2,\varepsilon}^{a}+\phi_{2,\varepsilon}^{a}\right) $. \\
Let $R_{0}>0$ such that $\{p_{j},q_{i}\} \subseteq B_{\frac{R_{0}}{2}}(0)$.
We denote
\begin{equation}\label{definitionofdx}d(x):=\int_{\mathbb{R}^2}\Big(\frac{2(x\cdot y)^2-|x|^2|y|^2}{8\pi|x|^4}\Big)f(V(y))dy,\  A(x):=
\sum_{j=1}^{N_{2}} \left \vert q_{j}\right \vert ^{2} -d\left( x\right)|x|^2 -2\sum_{j=1}^{N_{2}}\frac{\left( x\cdot
q_{j}\right) ^{2}}{\left \vert x\right \vert ^{2}}.\end{equation}
Let $x=|x|(\cos \theta +i\sin \theta)$,  $a=a_1+i(a_2)$, and $q_i=q_{i,1}+i(q_{i,2})$ for $1\le i\le N_2$. By the standard computation, we get
\begin{equation}\label{ang2}
\begin{aligned} A(x)&= \sum^{N_2}_{i=1}(-2q_{i_1}q_{i_2}\sin2%
\theta-(q_{i_1}^2-q_{i_2}^2)\cos2\theta)-\frac{1}{8\pi}(C_1\cos2\theta+C_2\sin2\theta), \end{aligned}
\end{equation}
where  $C_1:=\int_{\mathbb{R}^{2}}(y_{1}^{2}-y_{2}^{2})f(V(y))dy$, and $%
C_2:=2\int_{\mathbb{R}^{2}} y_{1}y_{2}f(V(y))dy$.

Now we consider the following linear problem:%
\begin{equation}
\left \{
\begin{array}{c}
\mathbb{L}_{1,\varepsilon}^a\left(
\phi_{1,\varepsilon}^{a},\phi_{2,\varepsilon}^{a}\right)
=k_{1,\varepsilon}^{a}, \\
Q\mathbb{L}_{2,\varepsilon}^a\left(
\phi_{1,\varepsilon}^{a},\phi_{2,\varepsilon}^{a}\right)
=k_{2,\varepsilon}^{a},%
\end{array}
\right.  \label{26}
\end{equation}
where%
\begin{align}
k_{1,\varepsilon}^{a}\left( x\right) & :=\varepsilon^{2}\left[ e^{\tilde {W}%
_{a}\left( \varepsilon x\right) +2\sum_{j=1}^{N_{2}}\ln \left \vert
\varepsilon x-\varepsilon q_{j}\right \vert +u_{2,\varepsilon}^{\ast}\left(
\varepsilon x\right) }-e^{W_{a}\left( \varepsilon x\right) }\right]
\label{27} \\
& -2\varepsilon^{4}e^{2\tilde{W}_{a}\left( \varepsilon x\right)
+4\sum_{j=1}^{N_{2}}\ln \left \vert \varepsilon x-\varepsilon q_{j}\right
\vert +2u_{2,\varepsilon}^{\ast}\left( \varepsilon x\right) },  \notag
\end{align}
and%
\begin{equation}
k_{2,\varepsilon}^{a}\left( x\right) :=\varepsilon^{2}\left(
4e^{2W_{0}\left( x\right) }-2e^{W_{0}\left( x\right) }\frac{A(x)}{|x|^2} \right) .
\label{28}
\end{equation}Here $k_{i,\varepsilon}^a$ can be regarded as the biggest part in the difference between $(u_{1,\varepsilon},u_{2,\varepsilon})$ and $(U^a_{1,\varepsilon},U^a_{2,\varepsilon})$.
It is easy to check that $k_{1,\varepsilon}^{a}\left( x\right),
k_{2,\varepsilon}^{a}\left( x\right) \in Y_{\alpha} $. Since $k_{2,\varepsilon}^{a}\left(
x\right)$ is a linear combination of radial symmetric function, $\cos 2\theta
$, and $\sin 2\theta$ by \eqref{ang2}, we can
see that $k_{2,\varepsilon}^{a}\left( x\right) \in F_{\alpha}$ from
the property of $Z_1$ and $Z_2$ (see \eqref{z0}). Then Corollary %
\ref{magofc} implies that there are $\left( \phi
_{1,\varepsilon}^{a},\phi_{2,\varepsilon}^{a}\right) \in
X_{\alpha}^{1}\times E_{\alpha}$ and two constants $%
c_{i,\varepsilon}^{a}=O(\varepsilon^{2-\frac{\alpha}{2}}\|\phi_{1,%
\varepsilon}^{a}\rho_2\|_{L^2(\RN)})$ $(i=1,2)$ satisfying
\begin{equation}\label{eqk}
\begin{aligned}\left\{ \begin{array}{ll}
\mathbb{L}_{1,\varepsilon}^{a}\left( \phi
_{1,\varepsilon}^{a},\phi_{2,\varepsilon}^{a}\right)=k_{1,\varepsilon}^{a},%
\\ \mathbb{L}_{2,\varepsilon}^{a}\left( \phi
_{1,\varepsilon}^{a},\phi_{2,\varepsilon}^{a}\right)=k_{2,%
\varepsilon}^{a}+c_{1,\varepsilon}^{a}Z_{1}+c_{2,\varepsilon}^{a}Z_{2}.
\end{array}\right. \end{aligned}
\end{equation}%
Furthermore, we prove that

\begin{proposition}
\label{T2}
(i) $|\ln \e|^{-1}\left(\Vert \phi_{1,\varepsilon}^{a}\Vert_{L^{\infty}(\mathbb{R}^{2})}+ \left\Vert \frac{\phi_{2,\varepsilon}^{a}}{\ln(2+|x|)}\right\Vert_{L^{\infty}(\mathbb{R}^{2})}\right)+\sum
_{i=1}^{2}\Vert \phi_{i,\varepsilon}^{a}\rho_2\Vert_{L^{2}(\mathbb{R}^{2})}=O(\varepsilon
^{2-\frac{\alpha}{2}}),$

(ii) $c_{i,\varepsilon}^{a}=O(\e^{4- \alpha })$ for $i=1,2$.
\end{proposition}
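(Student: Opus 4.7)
The structure of Proposition~\ref{T2} is to bootstrap: from a size estimate on the source terms $(k_{1,\varepsilon}^a, k_{2,\varepsilon}^a)$, the a priori inequality \eqref{importantineq} produces (i); then substituting (i) into the projection estimate from Corollary~\ref{magofc} produces (ii). Accordingly, the core task is to show
\[
\|k_{1,\varepsilon}^a\|_{Y_\alpha} + \|k_{2,\varepsilon}^a\|_{Y_\alpha} = O(\varepsilon^{2-\alpha/2}).
\]

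For $k_{2,\varepsilon}^a$ the estimate is direct: by \eqref{28}, $k_{2,\varepsilon}^a = \varepsilon^2\bigl(4e^{2W_0(x)} - 2e^{W_0(x)}A(x)/|x|^2\bigr)$, and since $A(x)/|x|^2$ is a bounded trigonometric polynomial and $e^{W_0}$ has the polynomial decay $|x|^{-\gamma_1-2N_2-4}$ at infinity, integration against the weight $(1+|x|)^{2+\alpha}$ gives $\|k_{2,\varepsilon}^a\|_{Y_\alpha} = O(\varepsilon^2)$ in view of the choice of $\alpha$. For $k_{1,\varepsilon}^a$, I would rewrite the exponent in the first summand of \eqref{27} as
\[
W_a(\varepsilon x) \;+\; \bigl[u_{2,\varepsilon}^*(\varepsilon x) - \gamma_1\ln|\varepsilon x|\bigr] \;+\; 2\sum_{j=1}^{N_2}\ln\frac{|\varepsilon x - \varepsilon q_j|}{|\varepsilon x|}.
\]
By Lemma \ref{diffgreen} the bracketed term is $O(\varepsilon)$ on compact subsets of $\mathbb{R}^2\setminus\{0\}$, while a Taylor expansion of the logarithmic sum produces a leading $1/|x|$ term controlled by $\sum q_j$ and a $1/|x|^2$ remainder. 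The normalization \eqref{sumqeqc} cancels precisely the first-order contribution (after combining with the $\vec{c}/|x|$ correction coming from $u_{2,\varepsilon}^* - \gamma_1\ln|x|$), so that the total perturbation in the exponent is effectively $O(\varepsilon)$ plus $O(|x|^{-2})$. Using $e^A - e^B = e^B(e^{A-B}-1)$, the difference in \eqref{27} is bounded by $\varepsilon^2 e^{W_a(\varepsilon x)}$ times such a small quantity, while the $-2\varepsilon^4 e^{2\tilde W_a + \cdots}$ term is of strictly lower order. Splitting the integration into $|x|\le \varepsilon^{-1}$ (where the weight is harmless and one extracts $\varepsilon^2$) and $|x|\ge \varepsilon^{-1}$ (where the heavy tail of the exponential is compensated by the rapid decay of $e^{W_a(\varepsilon x)}$), one gets $\|k_{1,\varepsilon}^a\|_{Y_\alpha} = O(\varepsilon^{2-\alpha/2})$; the half-power loss is exactly what the outer region produces against the $(1+|x|)^{2+\alpha}$ weight.

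Given these source estimates, Theorem \ref{thma} applied to \eqref{eqk} with $(h_1, h_2) = (k_{1,\varepsilon}^a, k_{2,\varepsilon}^a)$ and the inequality \eqref{importantineq} immediately deliver part (i). For part (ii), Corollary~\ref{magofc} yields $c_{i,\varepsilon}^a = O\bigl(\varepsilon^{2-\alpha/2}\,\|\phi_{1,\varepsilon}^a \rho_2\|_{L^2(\mathbb{R}^2)}\bigr)$, and substituting the bound from (i) gives $c_{i,\varepsilon}^a = O(\varepsilon^{2-\alpha/2}\cdot\varepsilon^{2-\alpha/2}) = O(\varepsilon^{4-\alpha})$.

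The main technical obstacle is the $k_{1,\varepsilon}^a$ estimate: one must carefully exploit both the cancellation built into the normalization \eqref{sumqeqc} and the $\varepsilon$-closeness of $u_{2,\varepsilon}^*$ to $\gamma_1\ln|x|$ away from the origin, because without these cancellations the leading order of the exponent perturbation would only be $O(|x|^{-1})$, producing an $O(\varepsilon)$ rather than $O(\varepsilon^{2-\alpha/2})$ bound and rendering the subsequent contraction argument in Section 4 unworkable. In particular it is this refined estimate that makes the constants $c_{i,\varepsilon}^a$ of order $\varepsilon^{4-\alpha}$, small enough to be absorbed when solving the finite-dimensional reduction for the modulation parameter $a$.
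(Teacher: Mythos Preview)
Your proposal is correct and follows the paper's own route: bound $\|k^a_{i,\varepsilon}\|_{Y_\alpha}=O(\varepsilon^{2-\alpha/2})$ (the paper packages your exponent decomposition and the cancellation \eqref{sumqeqc} into Lemma~\ref{mean}, which gives $e^{\tilde W_a+\cdots}-e^{W_a}=e^{W_a}\bigl(\varepsilon^2A(y)/|y|^2+O(\varepsilon^3|y|^{-3})\bigr)$ on $|y|\ge\varepsilon R_0$, and then a single change of variables $y=\varepsilon x$ replaces your inner/outer splitting), then invoke Theorem~\ref{thma} for (i) and Corollary~\ref{magofc} for (ii). One small correction to your intermediate bookkeeping: in the $x$-variable the bracket $u_{2,\varepsilon}^*(\varepsilon x)-\gamma_1\ln|\varepsilon x|$ equals $-\overrightarrow{c}\cdot x/|x|^2+O(|x|^{-2})$ rather than $O(\varepsilon)$, so after \eqref{sumqeqc} the exponent perturbation is purely $O(|x|^{-2})$ with no residual uniform $O(\varepsilon)$ term --- this matters, since an honest $O(\varepsilon)$ piece would only yield $\|k^a_{1,\varepsilon}\|_{Y_\alpha}=O(\varepsilon^{1-\alpha/2})$.
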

\begin{proof}
Since $\Vert k_{2,\varepsilon}^{a}\Vert_{Y_{\alpha}}=O(\varepsilon^{2})$ is
trivial, we consider $\Vert k_{1,\varepsilon}^{a}\Vert_{Y_{\alpha}}$. In
view of Lemma \ref{mean}, we see that
\begin{equation*}
\begin{aligned}  \|k^a_{1,\varepsilon}\|_{Y_\alpha}^2  & \le
 c \int_{\mathbb{R}^2} \varepsilon^2(e^{\tilde{W}_a(x) +2\sum_{i=1}^{N_2}\ln|x
-\varepsilon
q_i|+u_{2,\varepsilon}^*(x)}-e^{W_a(x)})^2\Big(1+\frac{|x|^{2+\alpha}}{%
\varepsilon^{2+\alpha}}\Big)dx
\\&+c \int_{\mathbb{R}^2}\varepsilon^6e^{4\tilde{W}_a(x)
+8\sum_{i=1}^{N_2}\ln|x -\varepsilon
q_i|+4u_{2,\varepsilon}^*(x)}\Big(1+\frac{|x|^{2+\alpha}}{\varepsilon^{2+%
\alpha}}\Big)dx \\&\le C\left[\varepsilon^{4+4N_2+2{\gamma_1}} +\int_{|x|\ge
\varepsilon
R_0}\varepsilon^{4-\alpha}\Big(e^{2W_a(x)}|x|^{-4}+e^{4W_a(x)}%
\Big) (1+|\ln|x||)^{12}|x|^{2+\alpha}dx\right]  \\& \le  O(\varepsilon^{4-\alpha}),\end{aligned}
\end{equation*}for some constants $c, C>0$.
Together with Theorem \ref{thma} and {Corollary}
\ref{magofc}, we complete the proof of Proposition \ref{T2}.\end{proof}

At this point, we can describe the final form of our approximate solution $(V_{1,\varepsilon}^a,V_{2,\varepsilon}^a)$ such that
\begin{equation}
\begin{aligned} \left \{ \begin{array}{ll}&V_{1,\varepsilon}^a(x):=V ( x ) +\frac{ \tilde{W}%
_{a} ( 0 ) -\tilde{W}_{a} ( \varepsilon x ) }{2}+\phi_{1,\varepsilon}^a(x),\\& V_{2,\varepsilon}^a(x):=\tilde{W}_a (\varepsilon x )+2\ln\e
+2\sum_{j=1}^{N_{2}}\ln   \vert \varepsilon x-\varepsilon q_{j} \vert
+u_{2,\varepsilon}^{\ast}(\varepsilon x)+\phi_{2,\varepsilon}^a(\varepsilon x),\end{array}\right. \end{aligned}\label{finalapp}
\end{equation}
where $(\phi_{1,\varepsilon}^a,\phi_{2,\varepsilon}^a)$ satisfies \eqref{26}.

\section{Proof of Theorem \ref{T121}  and Corollary \ref{T122}: Existence of solutions}
\subsection{Contraction mapping theorem}
We shall construct solutions  of \eqref{cs4} in the form of%
\begin{equation}
\left \{\label{finalapp2}
\begin{array}{l}
v_{1,\varepsilon}\left( x\right)=V ( x ) +\frac{ \tilde{W}%
_{a} ( 0 ) -\tilde{W}_{a} ( \varepsilon x ) }{2}  +\phi_{1,\varepsilon}^a\left(
x\right) +\eta_{1,\varepsilon}^a\left(
x\right)=V_{1,\varepsilon}^a(x)+\eta_{1,\varepsilon}^a\left(
x\right),  \\
v_{2,\varepsilon}\left( x\right) =\tilde{W}_a ( \e x )+2\ln \e
+2\sum_{j=1}^{N_{2}}\ln   \vert \e x-\varepsilon q_{j} \vert
+u_{2,\varepsilon}^{*}(\e x)   +\phi_{2,\varepsilon}^a\left(\e
x\right) +\eta_{2,\varepsilon}^a\left(\e
x\right)\\ \quad\quad\quad\ =V_{2,\varepsilon}^a(x)+\eta_{2,\varepsilon}^a\left(\e
x\right).
\end{array}
\right.
\end{equation}
To construct solutions of the form (\ref{finalapp2}), we need to find  $\left(
\eta_{1,\varepsilon}^{a},\eta_{2,\varepsilon}^{a}\right) $ solving the
following system:%
\begin{equation}
\left \{
\begin{array}{c}
\mathbb{L}_{\e}^{a,1}\left(
\eta_{1,\varepsilon}^{a},\eta_{2,\varepsilon}^{a}\right)
=g_{1,\varepsilon}^{a}\left(
\eta_{1,\varepsilon}^{a},\eta_{2,\varepsilon}^{a}\right),\quad \quad\quad \\
\mathbb{L}_{\e}^{a,2}\left(
\eta_{1,\varepsilon},\eta_{2,\varepsilon}\right)
=g_{2,\varepsilon}^{a}\left(
\eta_{1,\varepsilon}^{a},\eta_{2,\varepsilon}^{a}\right),\ \textrm{where}
\end{array}
\right.  \label{33}
\end{equation}
 \begin{equation*}
\begin{aligned}
& g_{1,\varepsilon}^{a}\left(\eta_{1,\varepsilon}^{a},\eta_{2\varepsilon
}^{a}\right)  :=-f\left( V_{1,\varepsilon}^{a}\left( x\right)  +\eta_{1,\varepsilon}^{a}\left( x\right) \right)
+f\left( V\left( x\right) \right) +f^{\prime}\left(V\left( x\right) \right) (\phi_{1,\e}^a(x)+\eta
_{1,\varepsilon}^{a}\left( x\right))
\\& +
e^{V_{2,\varepsilon}^{a}(x)+\eta_{2,\varepsilon}^{a}\left( \varepsilon x\right) }
-e^{V_{2,\varepsilon}^{a}(x)-\phi^a_{2,\e}(\e x)}-\e^2e^{W_{a}\left( \varepsilon x\right)
} (\phi_{2,\varepsilon}^{a}\left( \varepsilon x\right)+\eta_{2,\varepsilon}^{a}\left( \varepsilon x\right))
\\&-2 e^{2V_{2,\varepsilon}^{a}(x)  +2\eta_{2,\varepsilon}^{a}\left( \varepsilon x\right) }
+2e^{2V_{2,\varepsilon}^{a}(x)-2\phi^a_{2,\e}(\e x) }
\\& -e^{V_{1,\varepsilon}^{a}\left( x\right)
+ \eta_{1,\varepsilon}^{a}\left(
x\right) +V_{2,\varepsilon}^{a}\left( x\right)
+\eta_{2,\varepsilon}^{a}\left( \varepsilon x\right) },\ \ \textrm{and}
\\
 &g_{2,\varepsilon}^{a}\left( \eta_{1,\varepsilon}^{a},\eta_{2\varepsilon
}^{a}\right) :=
\frac{f\left( V_{1,\varepsilon}^{a}\left( \frac {x}{%
\varepsilon}\right)  +\eta_{1,\varepsilon}^{a}\left( \frac{x}{\varepsilon }\right)
\right) -f\left( V\left( \frac {x}{\varepsilon}%
\right) \right)
-f^{\prime}\left(V\left( \frac{x}{\varepsilon }%
\right) \right)\left(\phi_{1,\varepsilon}^{a}\left( \frac{x}{\varepsilon }\right)+\eta_{1,\varepsilon}^{a}\left( \frac{x}{\varepsilon }\right)\right)}{2\e^2}
 \\
& -  2e^{\tilde{W}_{a}\left( x\right) +2\sum_{j=1}^{N_{2}}\ln \left \vert
x-\varepsilon q_{j}\right \vert +u_{2,\varepsilon}^{*}\left( x\right)
 +\phi_{2,\varepsilon}^{a}(x) +\eta_{2,\varepsilon}^{a}\left(
x\right) }\quad\\&+2e^{W_{a}\left( x\right) }\Big(1+
\frac{A(x)}{|x|^2} \varepsilon^{2}+\phi_{2,\varepsilon}^{a}\left( x\right)+
\eta_{2,\varepsilon}^{a}\left( x\right)\Big)-4\e^2e^{2W_a(x)}
 +4\varepsilon^{-2}e^{2V_{2,\varepsilon}^{a}(\frac{x}{\e})  +2\eta_{2,\varepsilon}^{a}\left( x\right) }
 \\
& -\varepsilon^{-2}e^{V_{1,\varepsilon}^{a}\left( \frac{x}{\varepsilon}\right)+\eta_{1,\varepsilon
}^{a}\left( \frac{x}{\varepsilon}\right) +V_{2,\varepsilon}^{a}\left(
\frac{x}{\e}\right)
+\eta_{2,\varepsilon}^{a}\left( x\right)}
 +\varepsilon^{2}\left[
4(e^{2W_{a}\left( x\right) }-e^{2W_{0}\left( x\right) })-2(e^{W_{a}\left( x\right) }-e^{W_{0}\left( x\right) })\frac{A(x)}{|x|^2} \right]\\&+2(e^{W_0(x)}-e^{W_a(x)})(\phi_{2,\e}^a(x)+\eta_{2,\e}^a(x))-R_\e(x),
\end{aligned}\end{equation*}
here
\begin{equation}
R_\varepsilon(x):= \left \{
\begin{array}{l}
\sum_{i=1}^2c^a_{i,\varepsilon}Z_i \ \mbox{if}\ \frac{{\gamma_1}}{2}\in\mathbb{Z}%
, \\
0 \ \ \mbox{if}\ \frac{{\gamma_1}}{2}\notin\mathbb{Z}.%
\end{array}
\right.
\end{equation}
In view of contraction mapping theorem, we get the following proposition.
\begin{proposition}
\label{existence_of_eta} Let $0<\alpha<\min\left\{2(\gamma_1-2N_1-2N_2-4),\frac{1}{3},2\left(N_2+\frac{\gamma_1-2}{2}\right)\right\}$. There is  $\varepsilon_0>0$ such that for
any $\varepsilon\in(0,\varepsilon_0)$ and $a=a(\e)=O(\e^{2\alpha})$, there is $%
(\eta_{1,\varepsilon}^a,\eta_{2,\varepsilon}^a)\in X^1_\alpha\times
E_{\alpha}$ satisfying
\begin{equation}\left \{
\begin{array}{l}  \label{eq_for_existence_of_eta}
\mathbb{QL}_{\varepsilon}^{a}(\eta_{1,\varepsilon}^a,\eta_{2,%
\varepsilon}^a)=(g_{1,\varepsilon}^a(\eta_{1,\varepsilon}^a,\eta_{2,%
\varepsilon}^a),
{Q}g_{2,\varepsilon}^a(\eta_{1,\varepsilon}^a,\eta_{2,\varepsilon}^a)),
\\
|\ln\e|^{-1}\left(\Vert \eta_{1,\varepsilon}^{a}\Vert_{L^{\infty}(\mathbb{R}^{2})}+ \Big\|\frac{\eta_{2,\varepsilon}^a(x)}{\ln(2+|x|)}\Big\|_{L^\infty(\mathbb{R}^2)}\right)+\sum_{i=1}^2\Vert\eta_{i,\varepsilon}^{a}(x)\rho_{2}(x)\Vert_{L^2(\mathbb{R}^2)}
\le\e^{2+\alpha}.\end{array}
\right.
\end{equation}
\end{proposition}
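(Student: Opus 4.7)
The plan is to recast \eqref{eq_for_existence_of_eta} as a fixed point problem and apply the Banach contraction mapping theorem. Since Theorem \ref{thma} guarantees that $\mathbb{QL}_\varepsilon^a : X^1_\alpha\times E_\alpha\to Y_\alpha\times F_\alpha$ is an isomorphism for all sufficiently small $\varepsilon$, we set
$$ T_\varepsilon(\eta_1,\eta_2) := (\mathbb{QL}_\varepsilon^a)^{-1}\bigl(g_{1,\varepsilon}^a(\eta_1,\eta_2),\, Q g_{2,\varepsilon}^a(\eta_1,\eta_2)\bigr), $$
and, writing $\|\cdot\|_*$ for the norm on the left-hand side of \eqref{eq_for_existence_of_eta}, we seek a fixed point in the closed ball $\mathcal{B}_\varepsilon := \{(\eta_1,\eta_2) \in X^1_\alpha \times E_\alpha : \|(\eta_1,\eta_2)\|_* \le \varepsilon^{2+\alpha}\}$. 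By \eqref{importantineq},
$$ \|T_\varepsilon(\eta)\|_* \le C\bigl(\|g_{1,\varepsilon}^a(\eta)\|_{Y_\alpha} + \|g_{2,\varepsilon}^a(\eta)\|_{Y_\alpha}\bigr), $$
so the whole argument reduces on $\mathcal{B}_\varepsilon$ to the residual bound (a) $\|g_{i,\varepsilon}^a(\eta)\|_{Y_\alpha} \le (4C)^{-1}\varepsilon^{2+\alpha}$ and the Lipschitz estimate (b) $\|g_{i,\varepsilon}^a(\eta) - g_{i,\varepsilon}^a(\eta')\|_{Y_\alpha} \le (4C)^{-1}\|\eta-\eta'\|_*$.

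For (a), the approximate solution $(V_{1,\varepsilon}^a,V_{2,\varepsilon}^a)$ in \eqref{finalapp} was built from $(\phi_{1,\varepsilon}^a,\phi_{2,\varepsilon}^a)$ via \eqref{eqk} precisely so that the $O(\varepsilon^{2-\alpha/2})$ errors $k_{i,\varepsilon}^a$ are absorbed, leaving only higher-order remainders in $g_{i,\varepsilon}^a(0,0)$. I would decompose this residual into: (i) quadratic Taylor remainders of $f(V+\cdot)$ and of the exponential nonlinearities, which by Proposition \ref{T2}(i) are of size $O(\phi^2) = O(\varepsilon^{4-\alpha})$; (ii) the mismatch $e^{\tilde W_a(\varepsilon x)+2\sum_j\ln|\varepsilon x-\varepsilon q_j|+u_{2,\varepsilon}^*(\varepsilon x)} - e^{W_a(\varepsilon x)}\bigl(1+\varepsilon^2 A(x)/|x|^2\bigr)$ beyond what is already encoded in $k_{2,\varepsilon}^a$, controlled by Lemma \ref{diffgreen}, Lemma \ref{mean}, and the vanishing condition \eqref{sumqeqc} which removes the $O(\varepsilon)$-order term; (iii) the deviations $\varepsilon^2(e^{2W_a}-e^{2W_0})$ and $2(e^{W_0}-e^{W_a})(\phi_{2,\varepsilon}^a+\eta_2)$, which under $a(\varepsilon)=O(\varepsilon^{2\alpha})$ are $O(\varepsilon^{2+2\alpha})$; (iv) the projection correction $R_\varepsilon=\sum_i c_{i,\varepsilon}^a Z_i=O(\varepsilon^{4-\alpha})$ from Proposition \ref{T2}(ii); and (v) the coupling term $e^{V_{1,\varepsilon}^a+V_{2,\varepsilon}^a}$, whose weighted $Y_\alpha$-integral is finite thanks to $\gamma_1>2N_1+2N_2+4$. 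Summing yields $\|g_{i,\varepsilon}^a(0,0)\|_{Y_\alpha} = O(\varepsilon^{\min(2+2\alpha,\,4-\alpha)}) = o(\varepsilon^{2+\alpha})$, and the three upper bounds on $\alpha$ assumed in the hypothesis are exactly what keep all these weighted integrals finite.

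For (b), I would write $g_{i,\varepsilon}^a(\eta)-g_{i,\varepsilon}^a(\eta') = \int_0^1 D_\eta g_{i,\varepsilon}^a(\eta_s)(\eta-\eta')\,ds$ with $\eta_s=(1-s)\eta'+s\eta$, and note that $D_\eta g_{i,\varepsilon}^a(\eta_s)$ is the difference between $\mathbb{L}_{\varepsilon}^{a,i}$ and the genuine linearization at $(V_{1,\varepsilon}^a+\eta_{1,s}, V_{2,\varepsilon}^a+\eta_{2,s})$; by Proposition \ref{T2}(i) and $\|\eta\|_*\le\varepsilon^{2+\alpha}$, this difference has operator norm $O(\varepsilon^{2-\alpha/2}+\varepsilon^{2+\alpha}|\ln\varepsilon|)$ from $\|\cdot\|_*$ into $\|\cdot\|_{Y_\alpha}$. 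Once (a) and (b) are established, $T_\varepsilon$ contracts $\mathcal{B}_\varepsilon$ into itself for small enough $\varepsilon$, and its unique fixed point furnishes the required $(\eta_{1,\varepsilon}^a,\eta_{2,\varepsilon}^a)$. I expect the main technical obstacle to be the cross-scale Taylor remainder $\varepsilon^{-2}[f(V_{1,\varepsilon}^a(x/\varepsilon)+\eta_1(x/\varepsilon))-f(V(x/\varepsilon))-f'(V(x/\varepsilon))(\phi_{1,\varepsilon}^a(x/\varepsilon)+\eta_1(x/\varepsilon))]$ appearing in $g_{2,\varepsilon}^a$: the singular $\varepsilon^{-2}$ prefactor must be balanced against the $|\ln\varepsilon|$-weighted $L^\infty$ control on $\eta_1$ supplied by Theorem \ref{thma} and the rapid decay $f(V) = O(|x|^{-2\gamma_1})$ at infinity, and it is precisely this balance that forces the upper bound $\alpha < 2(\gamma_1-2N_1-2N_2-4)$ in the statement.
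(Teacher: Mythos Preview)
Your proposal is correct and follows essentially the same route as the paper: recast \eqref{eq_for_existence_of_eta} as a fixed point problem via $(\mathbb{QL}_\varepsilon^a)^{-1}$, work on the closed ball $\mathcal{B}_\varepsilon$ in the composite norm $\|\cdot\|_*$, and reduce everything to estimating $\|g_{i,\varepsilon}^a\|_{Y_\alpha}$ through Proposition~\ref{T2}, Lemma~\ref{mean}, and the hypothesis $a=O(\varepsilon^{2\alpha})$. The paper organizes the residual slightly differently, splitting $g_{1,\varepsilon}^a=I^{(1)}_\varepsilon+II^{(1)}_\varepsilon+III^{(1)}_\varepsilon$ and estimating these directly for $(w_1,w_2)\in S_\varepsilon$ rather than separating into $g(0,0)$ plus a Lipschitz bound, but the two bookkeepings are equivalent.

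One small correction: the constraint $\alpha<2(\gamma_1-2N_1-2N_2-4)$ is actually first needed for the term $-f'(V(x))\bigl(\tfrac{\tilde W_a(0)-\tilde W_a(\varepsilon x)}{2}\bigr)$ inside $I^{(1)}_\varepsilon$ (to make $|f'(V(x))|\,|\varepsilon x|^{2N_2+\gamma_1+2}$ integrable against the $Y_\alpha$ weight), which then feeds into $g_{2,\varepsilon}^a$ through the $\varepsilon^{-1}\|I^{(1)}_\varepsilon\|_{Y_\alpha}$ contribution; it is not the quadratic Taylor remainder itself that forces this bound. Consequently the leading order of $\|g_{2,\varepsilon}^a(0,0)\|_{Y_\alpha}$ is $O(\varepsilon^{3-\alpha}|\ln\varepsilon|^2+\varepsilon^{2+2\alpha})$ rather than $O(\varepsilon^{\min(2+2\alpha,4-\alpha)})$, but since $\alpha<\tfrac13$ this is still $o(\varepsilon^{2+\alpha})$ and your conclusion stands.
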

\begin{proof}
By using Theorem \ref{thma}, \eqref{eq_for_existence_of_eta} can be written
as
\begin{equation*}
\begin{aligned}
(\eta_{1,\varepsilon}^a,\eta_{2,\varepsilon}^a)&=(\mathbb{QL}^a_{\e})^{-1}(g_{1,\varepsilon}^a(\eta_{1,\varepsilon}^a,\eta_{2,%
\varepsilon}^a),
{Q}g_{2,\varepsilon}^a(\eta_{1,\varepsilon}^a,\eta_{2,\varepsilon}^a))
\\&=:B_\varepsilon^a(\eta_{1,\varepsilon}^a,\eta_{2,\varepsilon}^a)=(B_{1,%
\varepsilon}^a(\eta_{1,\varepsilon}^a,\eta_{2,\varepsilon}^a),B_{2,%
\varepsilon}^a(\eta_{1,\varepsilon}^a,\eta_{2,\varepsilon}^a)). \end{aligned}
\end{equation*}
From Theorem \ref{thma} and Lemma  \ref{pronorm}, we have  for a constant  $C>0$,  independent of $\varepsilon>0$,
\begin{equation}
\begin{aligned}\label{bnorm}
&|\ln \e|^{-1}\left(\|B_{1,\varepsilon}^a(\eta_{1,\varepsilon}^a,\eta_{2,\varepsilon}^a)\|_{L^\infty(\mathbb{R}^2)}+ \Big\|\frac{B_{2,\varepsilon}^a(\eta_{1,\varepsilon}^a,\eta_{2,%
\varepsilon}^a)(x)}{\ln(2+|x|)}\Big\|_{L^\infty(\mathbb{R}^2)}\right)+\sum_{i=1}^2\|B_{i,\varepsilon}^a(\eta_{1,\varepsilon}^a,\eta_{2,%
\varepsilon}^a)\rho_2\|_{L^2(\mathbb{R}^2)}\\& \le
C ( \|g_{1,\varepsilon}^a(\eta_{1,\varepsilon}^a,\eta_{2,%
\varepsilon}^a)\|_{Y_\alpha} + \|g_{2,\varepsilon}^a(%
\eta_{1,\varepsilon}^a,\eta_{2,\varepsilon}^a)\|_{Y_\alpha} ). \end{aligned}
\end{equation}Let
\begin{equation}
\begin{aligned}
&S_{\varepsilon}:=\Big\{(w_{1},w_{2})\in X^{1}_{\alpha}\times E_{\alpha}\ \Big|\\&
 |\ln \e|^{-1}\left(\| w_{1}\Vert_{L^{\infty}(\mathbb{R}%
^{2})}+   \Big\|\frac{w_{2} (x)}{\ln(2+|x|)}\Big\|_{L^\infty(\mathbb{R}^2)}\right)+ \sum_{i=1}^2\Vert
w_{i}\rho_{2}\Vert_{L^2(\mathbb{R}^2)}\le \e^{2+\alpha}\Big\}.
\end{aligned}
\end{equation}
We will prove that there exists  $\varepsilon_{0}>0$ such that for any
 $\varepsilon \in(0,\varepsilon_{0}]$ and $a=a(\e)=o(1)$, $%
B_{\varepsilon }^{a}=(B_{1,\varepsilon}^{a},B_{2,\varepsilon}^{a})$ is a
contraction mapping from $S_{\varepsilon}$ to $S_{\varepsilon}$.
First of all,  we claim that for small $\varepsilon>0$,
$B_{\varepsilon}^{a}: S_{\varepsilon}\longrightarrow S_{\varepsilon}.$
To prove it, we  shall  estimate $\|g_{\varepsilon
,i}^{a}(w_{1},w_{2})\|_{Y_{\alpha}}$, $ i=1,2,$ for $(w_{1},w_{2})\in S_{\varepsilon}$.
Denote
$
g_{1,\varepsilon}^{a}(w_{1},w_{2})
=I^{(1)}_{\varepsilon}+II^{(1)}_{\varepsilon}+III^{(1)}_{\varepsilon},\ \textrm{where}
$
\begin{equation}\label{123}
\begin{aligned}\left\{ \begin{array}{ll}  I^{(1)}_\varepsilon(x)
&:=-f\left(( V_{1,\varepsilon}^{a}  +w_1)\left( x\right) \right)
+f\left( V\left( x\right) \right) +f^{\prime}\left( V\left( x\right) \right)  (\phi^a_{1,\e}(x)+ w_1\left( x\right)
),
\\ II^{(1)}_\varepsilon(x)
&:=e^{V_{2,\varepsilon}^{a}(x)}(e^{ w_2\left( \varepsilon x\right) }-e^{-\phi^a_{2,\e}(\e x)})
-\e^2e^{W_{a}\left( \varepsilon x\right)}(\phi^a_{2,\e}(\e x)+ w_2\left( \varepsilon x\right))
 \\& -2e^{2V_{2,\varepsilon}^{a}(x)}(e^{ 2w_2\left( \varepsilon x\right) }-e^{-2\phi^a_{2,\e}(\e x)}),
 \\ III^{(1)}_\varepsilon(x)&:=-e^{V_{1,\varepsilon}^{a}(x)+w_1(x)+V_{2,\varepsilon}^{a}(x)+w_2(\e x)}.\end{array}\right. \end{aligned}
\end{equation}
Note that $e^{V(x)}=O((1+|x|)^{-2\gamma_1+2N_1})$ and   $\gamma_1>2N_1+2N_2+4$.
 Lemma \ref{mean} yields
\begin{equation*}
\begin{aligned} V_{1,\varepsilon}^{a}(x)&=V ( x ) +\frac{ \tilde{W}%
_{a} ( 0 ) -\tilde{W}_{a} ( \varepsilon x ) }{2}+\phi^a_{1,\e}(x) =V(x)+\ln \Big(\frac{  1+ \left \vert (\e x)^{1+\frac{{\gamma_1}}{2}%
+N_{2}}+a\right \vert ^{2}  }{  1+ \left \vert a\right \vert ^{2}  }\Big)+\phi^a_{1,\e}(x)\\&=V(x)+\phi^a_{1,\e}(x)+O(|\e x|^{2N_2+\gamma_1+2}+|a||\e x|^{N_2+\frac{\gamma_1}{2}+1}).\end{aligned}
\end{equation*}
We see that for some $\theta \in(0,1)$,
\begin{equation*}
\begin{aligned}&I^{(1)}_\varepsilon(x)
= -f'(V(x))\Big(\frac{\tilde{W}_a(0)-\tilde{W}_a(\e x)}{2}\Big)\\&-\frac{1}{2}f''(V(x)+\theta (V_{1,\varepsilon}^{a}(x)-V(x)+w_1(x)))\Big(\frac{\tilde{W}_a(0)-\tilde{W}_a(\e x)}{2}+\phi^a_{1,\e}(x)+w_1(x)\Big)^2.\end{aligned}
\end{equation*}By  $0<\alpha< \min\left\{2(\gamma_1-2N_1-2N_2-4),\frac{1}{3}\right\}$ and Proposition \ref{T2}, we get that
\begin{equation}\label{I0Y3}
\begin{aligned} &\|I^{(1)}_\varepsilon \|_{Y_\alpha}= O( \| w_1\|_{L^\infty(\RN)}^2+\|\phi^a_{1,\e}\|_{L^\infty(\RN)}^2+
\e^4+\e^{3}|a|)=O( \| w_1\|_{L^\infty(\RN)}^2 +
\e^{4-\alpha}|\ln \e|^2+\e^{3}|a|).
\end{aligned}\end{equation}
We see that
\begin{equation*}
\begin{aligned} II^{(1)}_\varepsilon(x)
&=\varepsilon^{2}\left[
e^{\tilde{W}_{a}\left( \varepsilon x\right) +2\sum_{j=1}^{N_{2}}\ln \left
\vert \varepsilon x-\varepsilon q_{j}\right \vert +u_{2,\varepsilon
}^{*}\left( \varepsilon x\right)}( e^{\phi^a_{2,\e}(\e x)+w_2\left( \varepsilon x\right) }-1)
 -e^{W_{a}\left( \varepsilon x\right)
}(\phi^a_{2,\e}(\e x)+ w_2\left( \varepsilon x\right))\right]
\\&-2\varepsilon^{4}
e^{2\tilde{W}_{a}\left( \varepsilon x\right) +4\sum_{j=1}^{N_{2}}\ln \left
\vert \varepsilon x-\varepsilon q_{j}\right \vert +2u_{2,\varepsilon
}^{*}\left( \varepsilon x\right)}(e^{ 2\phi^a_{2,\e}(\e x)+2w_2\left( \varepsilon x\right) }-1).
\end{aligned}
\end{equation*}
By $e^{W_a(x)}=O \left(\frac{|x|^{2N_2+\gamma_1}}{(1+|x| )^{4N_2+2\gamma_1+4}}\right)$, Lemma \ref{mean},   and $\gamma_1>2N_1+2N_2+4$,  we see that for some $\theta \in(0,1)$,
 \begin{equation}\label{I0Y1}
\begin{aligned} &\|II^{(1)}_\varepsilon\|_{Y_\alpha}= O \left( \||x| ^{1+\frac{\alpha}{2}}II^{(1)}_\varepsilon(x) \|_{L^2(|x|\ge R_0)}+\e^{2N_2+\gamma_1+1}( \| \phi^a_{2,\e}\rho_2\|_{L^2(\RN)}+\| w_2\rho_2\|_{L^2(\RN)})\right)
\\&\le O(\e^{-\frac{\alpha}{2}}) \Bigg\| |x|^{1+\frac{\alpha}{2}}e^{W_a}
 \Bigg[\phi^a_{2,\e}+ w_2\\&-\left\{\left(1+O\left( \frac{%
\varepsilon^2}{|x|^2} (1+|\ln|x||)^3\right)\right) \left(\phi^a_{2,\e}+w_2+e^{\theta(\phi^a_{2,\e}+ w_2 )}\frac{( \phi^a_{2,\e}+w_2)^2}{2}\right)\right\}\Bigg]\Bigg\|_{L^2(|x|\ge\e R_0)}
\\&+ O(\e^{2-\frac{\alpha}{2} })\Big\||x|^{1+\frac{\alpha}{2}}e^{2(W_a+\theta (\phi^a_{2,\e}+w_2))}(\phi^a_{2,\e}+w_2)
\left\{1+O\left( \frac{\varepsilon^2}{|x|^2}(1+|\ln|x||^6)\right)\right\}
\Big\|_{L^2(|x|\ge \e R_0)}
\\&+\e^{2N_2+\gamma_1+1} (\|\phi^a_{2,\e}\rho_2\|_{L^2(\RN)}+\| w_2\rho_2\|_{L^2(\RN)})
\\&=O (1)\left(\e^{-\frac{\alpha}{2}} \Big(\Big\|\frac{w_2(x)}{\ln(2+|x|)}\Big\|_{L^\infty(\RN)}^2+ \Big\|\frac{\phi^a_{2,\e}(x)}{\ln(2+|x|)}\Big\|_{L^\infty(\RN)}^2 \Big) \right)\\&+O\left(\e^{2-\frac{\alpha}{2}} (\| w_2\rho_2\|_{L^2(\RN)}+   \| \phi^a_{2,\e}\rho_2\|_{L^2(\RN)})\right)\\&=O (1)\left(\e^{-\frac{\alpha}{2}} \Big(\Big\|\frac{w_2(x)}{\ln(2+|x|)}\Big\|_{L^\infty(\RN)}^2+\e^{2-\frac{\alpha}{2}} \| w_2\rho_2\|_{L^2(\RN)}+\e^{4-\frac{3\alpha}{2}}|\ln \e|^2\right).
 \end{aligned}
\end{equation}
Note that $
III^{(1)}_\varepsilon(x)=-\varepsilon^2e^{\tilde{W}_a(\varepsilon x)
+2\sum^{N_2}_{i=1}\ln|\varepsilon x -\varepsilon
q_i|+u_{2,\varepsilon}^*(\varepsilon x )+\phi^a_{2,\e}(\e x)+w_2(\varepsilon x)+V(x)+\frac{\tilde{W}_a(0)-\tilde{W}_a(\varepsilon
x)}{2} +\phi^a_{1,\e}(x)+w_1(x)}. $\\
 By using    Lemma \ref{mean}, we see that
\begin{equation}\label{I0Y2}
\begin{aligned} &\|III^{(1)}_\varepsilon \|_{Y_\alpha}= O(1)\Big( \e^4+\e^4\|w_1\|_{L^\infty(\RN)}+\e^4\Big\|\frac{w_2(x)}{\ln(2+|x|)}\Big\|_{L^\infty(\RN)}\Big).\end{aligned}
\end{equation}
From \eqref{I0Y3}-\eqref{I0Y2},      we obtain that
\begin{equation}
\begin{aligned}\label{mag_of_gi2}
 \|g_{1,\varepsilon}^a(w_1,w_2)\|_{Y_\alpha}&\le      O(1)\Big( \| w_1\|_{L^\infty(\RN)}^2 + \e^{-\frac{\alpha}{2}}\Big\|\frac{w_2(x)}{\ln(2+|x|)}\Big\|_{L^\infty(\RN)}^2  +\e^{2-\frac{\alpha}{2}} \| w_2\rho_2\|_{L^2(\RN)}\Big)\\&+
 O(1)\Big( \e^3+\e^3\|w_1\|_{L^\infty(\RN)}+\e^3\Big\|\frac{w_2(x)}{\ln(2+|x|)}\Big\|_{L^\infty(\RN)}\Big). \end{aligned}
\end{equation}
In order to estimate $\|g_{2,\varepsilon}^a(w_1,w_2)\|_{Y_\alpha}$, we  see that if $|x|\ge \e R_0$, then Lemma \ref{mean} implies that \begin{equation}\label{II2}\begin{aligned}II_\e^{(2)}(x)&:=-2e^{\tilde{W}_{a}\left( x\right) +2\sum_{j=1}^{N_{2}}\ln \left \vert
x-\varepsilon q_{j}\right \vert +u_{2,\varepsilon}^{*}\left( x\right)
 +\phi_{2,\varepsilon}^{a}(x) +w_2\left(
x\right) }\quad\\&+2e^{W_{a}\left( x\right) }\Big(1+
\frac{A(x)}{|x|^2} \varepsilon^{2}+\phi_{2,\varepsilon}^{a}\left( x\right)+
w_2\left( x\right)\Big)-4\e^2e^{2W_a(x)}
 +4\varepsilon^{-2}e^{2V_{2,\varepsilon}^{a}(\frac{x}{\e})  +2w_2\left( x\right) }
 \\&=-2e^{W_a }\Big\{1+\frac{A(x)}{|x|^2}\e^2+O(\frac{\e^3}{|x|^3}(1+|\ln|x||)^3)\Big\}\Big\{1+ \phi_{2,\varepsilon}^{a} +w_2  +O(|\phi_{2,\varepsilon}^{a}|^2 +|w_2|^2)\Big\}
 \\&+2e^{W_{a}\left( x\right) }\Big(1+
\frac{A(x)}{|x|^2} \varepsilon^{2}+\phi_{2,\varepsilon}^{a}\left( x\right)+
w_2\left( x\right)\Big)
\\&-4\e^2e^{2W_a(x)}
 +4\e^2e^{2W_a(x)}\Big\{1+  O(\frac{\e^2}{|x|^2}(1+|\ln|x||)^6) + O(|\phi_{2,\varepsilon}^{a}|  +|w_2| )\Big\}\\&=O(e^{W_a})\Big(|\phi_{2,\varepsilon}^{a}|^2 +|w_2|^2+\frac{\e^2|\phi_{2,\varepsilon}^{a}|}{|x|^2}  +\frac{\e^2|w_2|}{|x|^2}+\frac{\e^3}{|x|^3}\Big)(1+|\ln|x||)^6.\end{aligned}\end{equation}
Together with the similar arguments in \eqref{I0Y3}-\eqref{I0Y2}, we obtain that
\begin{equation}
\begin{aligned}\label{mag_of_gi}
&\|g_{2,\varepsilon}^a(w_1,w_2)\|_{Y_\alpha}
\\&\le O(1)\Big(\frac{\|I_\e^{(1)}\|_{Y_\alpha}}{\e}+\|II_\e^{(2)}\|_{Y_\alpha}+\frac{\|III_\e^{(1)}\|_{Y_\alpha}}{\e}\Big)
\\& +O(1)\Big(|a|\e^2+|a|\|\phi^a_{2,\e}\rho_2\|_{L^2(\RN)}+|a|\|w_2\rho_2\|_{L^2(\RN)}+\sum_{i=1}^2|c_{i,\e}^a|\Big)
\\&
 \le   O(1)\Big( \Big\|\frac{w_{2}(x)}{\ln(2+|x|)}\Big\|_{L^\infty(\mathbb{R}^2)}^2+\varepsilon^2
 \| w_{2}\rho_2\|_{L^2(\mathbb{R}^2)}+\frac{\| w_1\|_{L^\infty(\RN)}^2}{\e}\Big)
\\&+ O(1)\Big(\e^{3-\alpha}|\ln\e|^2+\e^3\|w_1\|_{L^\infty(\RN)}+\e^3\Big\|\frac{w_2(x)}{\ln(2+|x|)}\Big\|_{L^\infty(\RN)}\Big)
 \\&+O(1)\Big(  |a|\e^2+|a|\|\phi^a_{2,\e}\rho_2\|_{L^2(\RN)}+|a|\|w_2\rho_2\|_{L^2(\RN)}\Big). \end{aligned}
\end{equation}
By using  \eqref{bnorm}, \eqref{mag_of_gi2}- \eqref{mag_of_gi}, and $a=O(\e^{2\alpha})$, we see that
\begin{equation*}
\begin{aligned} &|\ln \e|^{-1}\left(\|B_{1,\varepsilon}^a(w_1,w_2)\|_{L^\infty(\mathbb{R}^2
)}+ \Big\|\frac{B_{2,\varepsilon}^a(w_1,w_2)}{\ln(2+|x|)}\Big\|_{L^\infty(\mathbb{R}^2)}\right)+\sum_{i=1}^2\|B_{i,\varepsilon}^a(w_1,w_2)\rho_2\|_{L^2(\mathbb{R}^2)}
 =o(\e^{2+\alpha}),  \end{aligned}
\end{equation*}which implies the claim
$B_{\varepsilon}^{a}: S_{\varepsilon}\longrightarrow S_{\varepsilon}$ for small $\varepsilon>0$.
Similarly, if $(w_1,w_2)$, $(\tilde{w}_1,\tilde{w}_2)\in S_\e$, then
%Theorem \ref{thma}  and Lemma \ref{pronorm} imply
   for small $\e>0$,
\begin{equation*}
\begin{aligned}
&|\ln \e|^{-1}\left(\|B_{1,\varepsilon}^a(w_1,w_2)-B_{1,\varepsilon}^a(\tilde{w}_1,\tilde{w}_2)%
\|_{L^\infty(\mathbb{R}^2
)}+ \Big\|\frac{(B_{2,\varepsilon}^a(w_1,w_2)-B_{2,\varepsilon}^a(\tilde{w}_1,\tilde{w}_2)) (x)}{\ln(2+|x|)}\Big\|_{L^\infty(\mathbb{R}^2)}\right)\\&+\sum_{i=1}^2\|(B_{i,\varepsilon}^a(w_1,w_2)-B_{1,\varepsilon}^a(\tilde{w}_1,\tilde{w}_2))(x)\rho_2(x)\|_{L^2(\mathbb{R}^2)}
\\&\le
\frac{1}{2}\left(|\ln \e|^{-1}\left(\|w_1-\tilde{w}_1\|_{L^\infty(\mathbb{R}^2
)}+ \Big\|\frac{(w_2-\tilde{w}_2) (x)}{\ln(2+|x|)}\Big\|_{L^\infty(\mathbb{R}^2)}\right)+\sum_{i=1}^2\|(w_i-\tilde{w}_i)(x)\rho_2(x)\|_{L^2(\mathbb{R}^2)}
\right). \end{aligned}
\end{equation*}Therefore,  $%
B_{\varepsilon }^{a} : S_{\varepsilon} \rightarrow S_{\varepsilon}$  is the
contraction mapping
 for small
 $\varepsilon>0$ and $a=a(\e)=O(\e^{2\alpha})$.
By
 contraction mapping theorem,  there is a
unique $(\eta_{1,\varepsilon}^{a},\eta _{2,\varepsilon}^{a})\in
S_{\varepsilon}\subseteq X^{1}_{\alpha}\times E_{\alpha}$ satisfying \eqref{eq_for_existence_of_eta}.
 This completes the proof of  Proposition
\ref{existence_of_eta}.
\end{proof}
\subsection{Finite dimensional reduced problem}
If $\frac{{\gamma_1}}{2}\notin\mathbb{N}$,    there is no projection problem, and  Proposition \ref{existence_of_eta} with $\mathbb{QL}_{\e}^{0}$
directly implies the existence of solution of  \eqref{cs4}. In this subsection, we only
consider the case $\frac{{\gamma_1}}{2}\in\mathbb{N}$. By Proposition %
\ref{existence_of_eta}, there exists $\varepsilon_0>0$ such that for   $%
\varepsilon\in(0,\varepsilon_0]$ and any $a=a(\e)=O(\e^{2\alpha})$, there is $%
(\eta_{1,\varepsilon}^a,\eta_{2,\varepsilon}^a)\in X^1_\alpha\times
E_{\alpha}$ and $(C_{1,\varepsilon}^a,C_{2,\varepsilon}^a)\in\mathbb{R}%
\times\mathbb{R}$ such that
\begin{equation*}
\begin{aligned} \left\{ \begin{array}{ll} &\Delta
\eta^a_{1,\varepsilon}+f'(V(x))\eta^a_{1,\varepsilon}(x)-\varepsilon^2e^{W_a(%
\varepsilon x)}\eta^a_{2,\varepsilon}(\varepsilon x)
=g^a_{1,\varepsilon}(\eta_{1,\varepsilon}^a,\eta_{2,\varepsilon}^a),
\\&\Delta\eta^a_{2,\varepsilon}+2e^{W_0(x)}\eta^a_{2,\varepsilon}(x)-\frac{f'(V(%
\frac{x}{\varepsilon}))}{2\varepsilon^2}\eta^a_{1,\varepsilon}\Big(\frac{x}{%
\varepsilon}\Big)
=g^a_{2,\varepsilon}(\eta_{1,\varepsilon}^a,\eta_{2,\varepsilon}^a)
+\sum^2_{i=1}C_{i,\varepsilon}^aZ_{i}, \end{array}\right. \end{aligned}
\end{equation*}
and for $j=1,2$,
\begin{equation}\label{prored}
\begin{aligned}&\int_{\mathbb{R}^2}\Big\{\Delta\eta^a_{2,%
\varepsilon}+2e^{W_0}\eta^a_{2,\varepsilon}-\frac{f'(V(\frac{x}{%
\varepsilon}))\eta^a_{1,\varepsilon}(\frac{x}{%
\varepsilon})}{2\varepsilon^2}
-g^a_{2,\varepsilon}(\eta_{1,\varepsilon}^a,\eta_{2,\varepsilon}^a)
-\sum^2_{i=1}C_{i,\varepsilon}^aZ_{i}\Big\}Z_{j}dx=0.\end{aligned}
\end{equation}From now on, for the simplicity, we denote for $i=1,2$, $g^a_{i,\varepsilon}(\eta_{1,\varepsilon}^a,\eta_{2,\varepsilon}^a)$ by $g^a_{i,\varepsilon}$.
To complete the proof of  Theorem \ref{T121}, we are going to find $a\in\mathbb{R}^2$
satisfying  $C_{i,\varepsilon}^a\equiv0$ for $i=1,2$.
\begin{proposition}
\label{propgoal} Let $0<\alpha<\min\left\{2(\gamma_1-2N_1-2N_2-4),\frac{1}{3},2\left(N_2+\frac{\gamma_1-2}{2}\right)\right\}$.  There exists $\e_0>0$ such that for each $\e\in(0,\e_0)$, there exists $a_{\varepsilon}\in \mathbb{R}^{2}$ such that $%
|a_{\varepsilon}|=O(\e)$ and $C_{i,\varepsilon}^{a_\e}\equiv0$ for $i=1,2$.
\end{proposition}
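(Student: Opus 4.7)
I would view $a\mapsto(C^a_{1,\varepsilon},C^a_{2,\varepsilon})$ as a continuous map $F_{\varepsilon}:B_{K\varepsilon}(0)\subset\mathbb{R}^{2}\to\mathbb{R}^{2}$---continuity in $a$ following from running the contraction of Proposition \ref{existence_of_eta} with Lipschitz-in-$a$ control---and find its zero by a Banach fixed-point argument based on a sharp expansion. The starting point is to derive a workable formula for $C^{a}_{j,\varepsilon}$. Testing the second equation in \eqref{prored} against $Z_{j}$ and using $\mathfrak{L}_{0}Z_{j}=0$ together with Lemma \ref{integrationbyparts} to integrate by parts kills the contribution from $\Delta\eta^{a}_{2,\varepsilon}+2e^{W_{0}}\eta^{a}_{2,\varepsilon}$, leaving
\begin{equation*}
C^{a}_{j,\varepsilon}\Vert Z_{j}\Vert_{L^{2}}^{2}=-\int_{\mathbb{R}^{2}}\frac{f'(V(x/\varepsilon))}{2\varepsilon^{2}}\eta^{a}_{1,\varepsilon}(x/\varepsilon)Z_{j}\,dx-\int_{\mathbb{R}^{2}}g^{a}_{2,\varepsilon}Z_{j}\,dx.
\end{equation*}
The first right-hand term is $O(\varepsilon^{4-\alpha/2})$ by the Corollary \ref{magofc} type estimate combined with $\Vert\eta^{a}_{1,\varepsilon}\rho_{2}\Vert_{L^{2}}=O(\varepsilon^{2+\alpha})$ from Proposition \ref{existence_of_eta}, and will be absorbed into a remainder.

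Next I would expand the second integral in powers of $a$. Since $\partial_{a_{k}}W_{0,a}\vert_{a=0}=-4Z_{k}$ from \eqref{z0}, we have $W_{a}=W_{0}-4(a_{1}Z_{1}+a_{2}Z_{2})+O(\vert a\vert^{2})$, so both $e^{W_{a}}-e^{W_{0}}$ and $e^{2W_{a}}-e^{2W_{0}}$ are linear in $a$ at leading order with coefficients proportional to $e^{W_{0}}Z_{k}$ and $e^{2W_{0}}Z_{k}$ respectively. The dominant $a$-linear contribution to $g^{a}_{2,\varepsilon}$ then comes from the explicit block $\varepsilon^{2}[4(e^{2W_{a}}-e^{2W_{0}})-2(e^{W_{a}}-e^{W_{0}})A(x)/\vert x\vert^{2}]$. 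Projecting against $Z_{j}$ and using the $L^{2}$-orthogonality of the modes $Z_{1},Z_{2}$---which follows from the angular decomposition in \eqref{z0}, the radial symmetry of $e^{W_{0}}$ and $e^{2W_{0}}$, and the pure $\cos 2\theta,\sin 2\theta$ angular content of $A(x)/\vert x\vert^{2}$ in \eqref{ang2} (the assumption $\gamma_{1}>2N_{1}+2N_{2}+4$ forces $N_{2}+\gamma_{1}/2+1\geq 3$, so no resonance can arise)---yields the diagonal expansion
\begin{equation*}
-\int_{\mathbb{R}^{2}}g^{a}_{2,\varepsilon}Z_{j}\,dx=m\,\varepsilon^{2}a_{j}+o(\varepsilon^{2}\vert a\vert)+G_{j,\varepsilon},\qquad m:=\frac{32}{\Vert Z_{j}\Vert_{L^{2}}^{2}}\int_{\mathbb{R}^{2}}e^{2W_{0}}Z_{j}^{2}\,dx,
\end{equation*}
with the same value of $m\neq 0$ for $j=1,2$ (using $\Vert Z_{1}\Vert_{L^{2}}=\Vert Z_{2}\Vert_{L^{2}}$ and $\int e^{2W_{0}}Z_{1}^{2}\,dx=\int e^{2W_{0}}Z_{2}^{2}\,dx$), and with $G_{j,\varepsilon}:=-\int g^{0}_{2,\varepsilon}Z_{j}\,dx$ of size $O(\varepsilon^{3-\alpha}\vert\ln\varepsilon\vert^{C})$ from a direct estimate of $g^{0}_{2,\varepsilon}$ via the same angular/weighted arguments used in Proposition \ref{existence_of_eta}. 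The implicit $a$-dependence through $\phi^{a}_{i},\eta^{a}_{i},c^{a}_{i,\varepsilon}$ is absorbed into the $o(\varepsilon^{2}\vert a\vert)$ remainder by running a Lipschitz-in-$a$ version of Propositions \ref{T2} and \ref{existence_of_eta}.

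Finally, dividing by $m\varepsilon^{2}$, the system $C^{a}_{j,\varepsilon}=0$ for $j=1,2$ becomes the fixed-point equation $a=T_{\varepsilon}(a)$ with $T_{\varepsilon}(a)=O(\varepsilon\vert\ln\varepsilon\vert^{C})+o(\vert a\vert)$, which for $\alpha$ and $\varepsilon$ small is a contraction on $\{\vert a\vert\leq K\varepsilon\}$ mapping the ball into itself for $K$ large; Banach's theorem supplies the desired $a_{\varepsilon}$ with $\vert a_{\varepsilon}\vert=O(\varepsilon)$, completing the reduction. The main obstacle is the expansion step: securing the non-degeneracy $m\neq 0$ of the leading coefficient and checking, via a careful angular-mode accounting of every term in $g^{a}_{2,\varepsilon}$, that the implicit $a$-dependencies through the contraction variables $\phi^{a}_{i},\eta^{a}_{i},c^{a}_{i,\varepsilon}$ remain strictly subleading. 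This requires sharpening the abstract estimates of Theorem \ref{thma} and Propositions \ref{T2}--\ref{existence_of_eta} to Lipschitz bounds in the parameter $a$, combined with the explicit angular structure of $Z_{1},Z_{2}$ and $A(x)$.
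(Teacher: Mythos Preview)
Your overall strategy matches the paper's: test the second equation against $Z_j$, extract a diagonal linear-in-$a$ leading term of order $\varepsilon^2$, and solve the resulting $2\times 2$ system. The gap is in your treatment of the implicit $a$-dependence.

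You assert that the contribution of $\phi^a_{i,\varepsilon}$ to $\int g^a_{2,\varepsilon}Z_j\,dx$ is $o(\varepsilon^2|a|)$, to be secured by Lipschitz-in-$a$ refinements of Propositions~\ref{T2} and~\ref{existence_of_eta}. This does not work. Inside $g^a_{2,\varepsilon}$ sits the term $2(e^{W_0}-e^{W_a})\phi^a_{2,\varepsilon}$, whose projection onto $Z_j$ linearises to $-8\sum_i a_i\int e^{W_0}Z_iZ_j\,\phi^a_{2,\varepsilon}\,dx$. Proposition~\ref{T2} only gives $\|\phi^a_{2,\varepsilon}\rho_2\|_{L^2}=O(\varepsilon^{2-\alpha/2})$, so the naive bound on this projection is $O(|a|\,\varepsilon^{2-\alpha/2})$, which \emph{dominates} your proposed leading term $m\,\varepsilon^2 a_j$. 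Lipschitz control in $a$ is irrelevant here, since the obstruction is the size of $\phi^0_{2,\varepsilon}$ itself, not its variation in $a$.

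The paper handles this by an explicit computation rather than an abstract bound: one introduces auxiliary functions $\xi_{i,j}\in X^2_\alpha$ solving $\Delta\xi_{i,j}+2e^{W_0}\xi_{i,j}=2e^{W_0}Z_iZ_j$, integrates by parts to convert $\int e^{W_0}Z_iZ_j\phi^a_{2,\varepsilon}$ into $\int\xi_{i,j}(\Delta\phi^a_{2,\varepsilon}+2e^{W_0}\phi^a_{2,\varepsilon})$, and then substitutes the equation \eqref{eqk} for $\phi^a_{2,\varepsilon}$. This produces a term involving $\int f'(V)\phi^a_{1,\varepsilon}$, which is in turn evaluated using the $Z_0$-projection of \eqref{eqk} together with the vanishing-integral identity \eqref{reduction4} coming from $\phi^a_{1,\varepsilon}\in X^1_\alpha$. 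The outcome is that the implicit $\phi^a_{2,\varepsilon}$-contribution is of \emph{exact} order $|a|\varepsilon^2$ and modifies the leading coefficient: instead of your $32\int e^{2W_0}Z_j^2\,dx$, the true coefficient multiplying $a_j\varepsilon^2$ is
\[
2\int_{\mathbb{R}^2} e^{2W_0}\Big(1-\tfrac{2}{1+|x|^{2N_2+\gamma_1+2}}+16Z_j^2-\tfrac{2}{(1+|x|^{2N_2+\gamma_1+2})^2}\Big)\,dx.
\]
Unlike your $m$, this integrand changes sign, so positivity is not automatic; the paper verifies it by an explicit one-variable calculation (substituting $t=r^2$ and integrating by parts), and this non-degeneracy check is an essential step that your plan does not cover.
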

\begin{proof}
 \textit{Step 1}.  We claim that\begin{equation}\label{cl_1}
\begin{aligned} C_{j,\e}^a\int_{\RN} Z_{j}^2dx= -\intr  g^{a}_{2,\varepsilon}Z_{j}dx+
O(\varepsilon^3).\end{aligned}
\end{equation}
Indeed, by $\int_{\RN} Z_{j}Z_{i}dx =\Big[\int_{\RN} Z_{i}^2dx\Big]\delta_{i,j}$,   Lemma \ref{integrationbyparts} and \eqref{prored}, we see that for $j=1,2$, \begin{equation}\label{reduction_1}
\begin{aligned} C_{j,\e}^a\int_{\RN} Z_{j}^2dx&=\intr(\sum_{i=1}^2 C_{i,\e}^a Z_{i}) Z_{j} dx\\&
 =\int_{\mathbb{R}^2}\Big\{ \Delta
\eta^{a}_{2,\varepsilon}+2e^{W_{a}}\eta^{a}_{2,%
\varepsilon}-\frac{f'(V(\frac{x}{\varepsilon}))}{2\varepsilon^2}\eta^{a_%
\varepsilon}_{1,\varepsilon}\Big(\frac{x}{\varepsilon}\Big)
-g^{a}_{2,\varepsilon}
\Big\}Z_{j}dx
\\&=-\intr  \left(\frac{f'(V(x))}{2}\eta^{a}_{1,\varepsilon}(x)Z_{j}(\e x)
+g^{a}_{2,\varepsilon}Z_{j}\right)dx
\\&=-\intr  g^{a}_{2,\varepsilon}Z_{j}dx+
O\left(\intr|f'(V(x))\eta^a_{1,\varepsilon}(x)|
\frac{|\varepsilon x|^{N_2+\frac{{\gamma_1}}{2}+1}}{1+|\varepsilon
x|^{2N_2+{\gamma_1}+2}}dx \right)\\&= -\intr  g^{a}_{2,\varepsilon}Z_{j}dx+
O(\|\eta^a_{1,\varepsilon}\|_{L^\infty(\RN)}(\varepsilon^{N_2+\frac{{\gamma_1}}{2}+1}+\e^{2\gamma_1-2N_1-2}))
\\&= -\intr  g^{a}_{2,\varepsilon}Z_{j}dx+
O(\varepsilon^3).\end{aligned}
\end{equation}

\medskip
\noindent
\textit{Step 2.}
We claim that for $j=1,2$,
 \begin{equation}
\begin{aligned}\label{eachterm5} -\int_{\mathbb{R}^2}
 g^{a}_{2,\varepsilon}Z_{j} dx&= 32a_j\e^2\intr e^{2W_0}  Z_{j}^2 dx-8\intr e^{W_0(x)}\left(\sum_{i=1}^2a_i\phi_{2,\e}^a(x)Z_{i}\right)Z_{j}dx\\&+  O(\e^{3}+|a|\e^{2+\alpha}+|a|^2\e^{2-\frac{\alpha}{2}}).\end{aligned}
 \end{equation}Indeed, by using \eqref{123}, \eqref{II2}-\eqref{mag_of_gi}, and  Lemma \ref{mean}, we get for $j=1,2$, \begin{equation*}
\begin{aligned} & -\int_{\mathbb{R}^2}
 g^{a}_{2,\varepsilon}Z_{j}  dx\\&=
   \int_{\RN}  \Big(2(e^{W_a(x)}-e^{W_0(x)})\frac{A(x)\e^2}{|x|^2}
-4\e^2 (e^{2W_a(x)}-e^{2W_0(x)})\Big) Z_{j}(x)dx+ 2\intr( e^{W_a(x)}-e^{W_0(x)})\phi_{2,\e}^a(x)Z_{j}dx \\&+O\Bigg(\intr \Big(\frac{|I_\e^{(1)}(\frac{x}{\e})|+|III^{(1)}_\e(\frac{x}{\e})|}{\e^2}\Big)|Z_j(x)|dx\Bigg)+  O\Big(\e^{3}+|a|\Big\|\frac{\eta_{2,\e}^a(x)}{\ln(2+|x|}\Big\|_{L^\infty(\RN)}\Big). \end{aligned}
\end{equation*}
Together with \eqref{ang2}, $(N_2+\frac{\gamma_1}{2}+1)>2$, and   radial symmetric property of $W_0$, we get that
\begin{equation*}
\begin{aligned} &-\int_{\mathbb{R}^2}
 g^{a}_{2,\varepsilon}Z_{j} dx\\&=
   -\int_{\RN}  4\e^2e^{2W_0(x)}\Big(\sum_{i=1}\Big(2a_i\frac{\partial W_a}{\partial a_i}\Big|_{a=0}Z_{j} \Big)dx  + 2\intr e^{W_0(x)}\left(\sum_{i=1}^2a_i\frac{\partial W_a}{\partial a_i}\Big|_{a=0}\right)\phi_{2,\e}^a(x)Z_{j}dx\\&+O\Big(\intr (|I_\e^{(1)}(x)|+|III^{(1)}_\e(x)|)|Z_j(\e x)|dx\Big)+   O\Big(\e^{3 } +|a|\Big\|\frac{\eta_{2,\e}^a(x)}{\ln(2+|x|}\Big\|_{L^\infty(\RN)}+|a|^2\e^2+|a|^2\|\phi^a_{2,\e}\rho_2\|_{L^2(\RN)}\Big)\\& = 32a_j\e^2\intr e^{2W_0}Z_{j}^2dx
  -8\intr e^{W_0(x)}\left(\sum_{i=1}^2a_i\phi_{2,\e}^a(x)Z_{i}\right)Z_{j}dx+  O(\e^{3}+|a|\e^{2+\alpha}+|a|^2\e^{2-\frac{\alpha}{2}}), \end{aligned}
\end{equation*}where we used \eqref{I0Y3}, \eqref{I0Y2}, Proposition \ref{T2}, and Proposition \ref{existence_of_eta} for the second indentity. Now we complete the proof of the claim \eqref{eachterm5}.

\medskip
\noindent
\textit{Step 3.}
We claim for $j=1,2,$
 \begin{equation}\label{step4_r}
\begin{aligned}&-8\intr e^{W_0(x)}\left(\sum_{i=1}^2a_i\phi_{2,\e}^a(x)Z_{i}\right)Z_{j}dx\\&=-  a_j  \int_{\mathbb{R}^2}k_{1,\e}^adx
-4a_j\intr \e^2e^{2W_0(x)}\left(\frac{1}{(1+|x|^{2N_2+\gamma_1+2})}+\frac{1}{(1+|x|^{2N_2+\gamma_1+2})^2}\right)dx\\&+O(|a|^2 \e^{2-\frac{\alpha}{2}}+\e^{4 }).\end{aligned}
\end{equation}
To prove the claim \eqref{step4_r}, firstly, we consider the following functions as in \cite[Lemma 2.5]{CI}:
\begin{equation*}
\xi_{r}(x)=\xi_{r}(|x|):=\frac{1}{4(1+|x|^{2N_{2}+{\gamma_1}+2})^{2}}=\frac{1}{4}-\frac{(2|x|^{2N_2+\gamma_1+2}+|x|^{4N_2+2\gamma_1+4})}{4(1+|x|^{2N_2+\gamma_1+2})^2},\ \ \textrm{and}
\end{equation*}
\begin{equation*}
\xi_{\theta }(x)=\xi_{\theta }(|x|):=-\frac{|x|^{2N_{2}+{\gamma_1} +2}}{4(1+|x|^{2N_{2}+{\gamma_1}+2})^{2}}.
\end{equation*}
Then $(\xi_{r},\xi_{\theta})$ satisfies
\begin{equation*}
\begin{aligned} \left \{ \begin{array}{ll}
&(\xi_r)''+\frac{(\xi_r)'}{r}+2e^{W_0}\xi_r=\frac{4(N_2+%
{\gamma_1}/2+1)^2|x|^{4N_2+2{\gamma_1}+2}}{(1+|x|^{2N_2+{\gamma_1}+2})^4},
\\&(\xi_\theta)''+\frac{(\xi_\theta)'}{r}-\frac{(2N_2+{\gamma_1}+2)^2}{r^2}\xi_%
\theta+2e^{W_0}\xi_\theta=\frac{4(N_2+{\gamma_1}/2+1)^2|x|^{4N_2+2%
{\gamma_1}+2}}{(1+|x|^{2N_2+{\gamma_1}+2})^4}\ \ \mbox{where} \  r=|x|.
\end{array}\right. \end{aligned}
\end{equation*}
Define $\xi_{i,j}$ as follows:
\begin{equation}  \label{ang3}
\begin{aligned} \left \{ \begin{array}{ll}&\xi_{1,1}=\xi_r+\xi_\theta
\cos[(2N_2+{\gamma_1}+2)\theta], \\&\xi_{1,2}=\xi_{2,1}=\xi_\theta
\sin[(2N_2+{\gamma_1}+2)\theta],\\&\xi_{2,2}=\xi_r-\xi_\theta
\cos[(2N_2+{\gamma_1}+2)\theta]. \end{array}\right. \end{aligned}
\end{equation}
Then $\xi_{i,j}\in X^{2}_{\alpha}$ satisfies
\[\Delta
\xi_{i,j}+2e^{W_0}\xi_{i,j}=2Z_iZ_je^{W_0}.\]
In view of \eqref{ang2}, $(N_2+\frac{\gamma_1}{2}+1)>2$, and   radial symmetric property of $W_0$, we get
 \begin{equation}\label{pf_reduction1}
\begin{aligned}&-8\intr e^{W_0(x)} a_i\phi_{2,\e}^a(x)Z_{i}Z_{j}dx\\&= -4a_i\int_{\mathbb{R}^2}(\Delta \xi_{ij}+2e^{W_0(x)}\xi_{ij})\phi_{2,\e}^adx=  -4a_i\int_{\mathbb{R}^2}(\Delta \phi_{2,\e}^a+2e^{W_0(x)}\phi_{2,\e}^a)\xi_{ij}dx\\&=  -4a_i\int_{\mathbb{R}^2}\xi_{ij}\left( \frac{f'(V(\frac{x}{\e}))\phi_{1,\e}(\frac{x}{\e})}{2\e^2}+4\e^2e^{2W_0(x)}-2\e^2e^{W_0}\frac{A(x)}{|x|^2}+\sum_{i=1}^2c_{i,\e}^aZ_i\right)dx \\& = -4a_i\left[\int_{\mathbb{R}^2}  \frac{\xi_{ij}(\e x) f'(V(x))\phi_{1,\e}(x)}{2}+4\e^2\xi_{ij}e^{2W_0(x)}dx \right]
\\&=-4\delta_{ij}a_i\left[\intr \frac{f'(V(x)\phi_{1,\e}^a(x)}{8} + \frac{\e^2e^{2W_0(x)}}{(1+|x|^{2N_2+\gamma_1+2})^2}dx\right] \\&+O(\e^{2N_2+\gamma_1+2}+\e^{2\gamma_1-2N_1-2})\|\phi^a_{1,\e}\|_{L^\infty(\RN)} \\&=-4\delta_{ij}a_i\left[\intr \frac{f'(V(x)\phi_{1,\e}^a(x)(3-Z_{0}(\e x))}{16} + \frac{\e^2e^{2W_0(x)}}{(1+|x|^{2N_2+\gamma_1+2})^2}dx\right]\\&+O(\e^{2N_2+\gamma_1+2}+\e^{2\gamma_1-2N_1-2})\|\phi^a_{1,\e}\|_{L^\infty(\RN)} .\end{aligned}
\end{equation}To compute $\intr \frac{f'(V(x)\phi_{1,\e}^a(x)(3-Z_{0}(\e x))}{16} dx$, firstly,  we see from Lemma \ref{integrationbyparts} and \eqref{eqk}
\begin{equation}\label{reduction3}
\begin{aligned}0&= \int_{\mathbb{R}^2}\Big\{ \Delta
\phi^{a}_{2,\varepsilon}+2e^{W_{0}}\phi^{a}_{2,%
\varepsilon}-\frac{f'(V(\frac{x}{\varepsilon}))}{2\varepsilon^2}\phi^{a}_{1,\varepsilon}\Big(\frac{x}{\varepsilon}\Big)
-k^{a}_{2,\varepsilon}
-\sum^2_{i=1}c_{i,\varepsilon}^aZ_{i}\Big\}Z_{0}dx
%\\&=-\int_{\mathbb{R}^2} \Delta \phi^{a}_{2,\varepsilon}dx\\&-\intr  \frac{f'(V(x))}{2}\phi^{a}_{1,\varepsilon}(x)Z_{0}(\e x)+(g^{a}_{2,\varepsilon}+\sum^2_{i=1}C_{i,\varepsilon}^aZ_{i})Z_{0}dx
\\&=\int_{\mathbb{R}^2} 2e^{W_{0}}\phi^{a}_{2,%
\varepsilon}-\frac{f'(V(x))}{2}\phi^{a}_{1,\varepsilon}(x)
-k^{a}_{2,\varepsilon}
-\sum^2_{i=1}c_{i,\varepsilon}^aZ_{i} dx\\&-\intr  \left\{\frac{f'(V(x))}{2}\phi^{a}_{1,\varepsilon}(x)Z_{0}(\e x)
+(k^{a}_{2,\varepsilon}
+\sum^2_{i=1}c_{i,\varepsilon}^aZ_{i})Z_{0}\right\}dx.\end{aligned}
\end{equation}
By $\phi_{1,\varepsilon}^{a}\in X_{\alpha}^1$,  Lemma \ref{w}, and Lemma \ref{estfory},  there is a constant $c_{\phi_{1,\e}^a}$ such that
\begin{equation*}\begin{aligned}
\phi_{1,\e}^a(x)&=c_{\phi_{1,\e}^a}-\frac{\ln|x|}{2\pi}\int_{\mathbb{R}^2}f'(V(y)) \phi_{1,\e}^a(y)-e^{W_a(y)}\phi_{2,\e}^a(y)-k_{1,\e}dy\\&+O(|x|^{-\frac{\alpha}{2}}\ln|x|\|\Delta \phi_{1,\e}^a\|_{Y_\alpha})\ \ \textrm{for}\ |x|\ge2.\end{aligned}
\end{equation*}Since $\phi_{1,\e}^a\in X_{\alpha}^1$, we have $\phi_{1,\e}^a\rho_1\in L^2(\RN)$ and thus
\begin{equation}\label{reduction4}\int_{\mathbb{R}^2}f'(V(y)) \phi_{1,\e}^a(y)-e^{W_a(y)}\phi_{2,\e}^a(y)-k_{1,\e}dy=0.\end{equation}
By \eqref{reduction3}-\eqref{reduction4}, we have
\begin{equation}\begin{aligned}\label{reduction5}  \int_{\mathbb{R}^2}f'(V ) \phi_{1,\e}^a \frac{(3-Z_{0}(\e y))}{16}dy &=\intr \frac{k_{1,\e}}{4}+\frac{k_{2,\e}}{8}(1+Z_{0})dy+O(|a| \| \phi^a_{2,\e}\rho_2\|_{L^2(\RN)})
\\&=\intr \frac{k_{1,\e}}{4}+\frac{\e^2e^{2W_0}}{(1+|y|^{2N_2+\gamma_1+2})}dy+O(|a| \e^{2-\frac{\alpha}{2}}).\end{aligned}\end{equation}
Then from \eqref{pf_reduction1} and \eqref{reduction5}, we prove the claim \eqref{step4_r}.

\medskip
\noindent
\textit{Step 4.} We claim  that for $j=1,2$,
 \begin{equation}
\begin{aligned}\label{eachterm3}&-\intr a_jk_{1,\e} dx=2a_j\e^2\intr e^{2W_0}dx+O(|a|\e^{3}+|a|^2\e^2).\end{aligned}
\end{equation} Indeed, by   Lemma \ref{mean},  \eqref{ang2},   and   radial symmetric property of $W_0$,   we see that
\begin{equation*}
\begin{aligned} &-\intr a_jk_{1,\e}dx
\\& =-a_j\intr \Big\{ e^{\tilde{W}_a(  x)+2\sum_{i=1}^{N_2}\ln|  x-\e q_i|+u_{2,\e}^*(  x)}- e^{W_a(  x)}  - 2\e^2e^{2\tilde{W}_a(  x)+4\sum_{i=1}^{N_2}\ln| x-\e q_i|+2u_{2,\e}^*(  x)}\Big\}dx
\\&=-a_j\intr \left( e^{ {W}_a(  x) }\frac{A(x)}{|x|^2}\e^2- 2\e^2e^{2{W}_a(  x)}\right)dx+O(|a|\e^3)
\\&=2a_j\e^2\intr e^{2W_0}dx+O(|a|\e^{3}+|a|^2\e^2).\end{aligned}
\end{equation*}
Now we complete the proof of the claim \eqref{eachterm3}.

\medskip
\noindent
\textit{Step 5.} In view of  \eqref{cl_1} and \eqref{eachterm5}-\eqref{eachterm3}, we get for $j=1,2$,
\begin{equation}\label{eachterm11}
\begin{aligned}  C_{j,\e}   \int_{\RN} Z_{j}^2dx
 &=2a_j\e^2\intr e^{2W_0}\left(1-\frac{2}{1+|x|^{2N_2+\gamma_1+2}}+16Z_{j}^2-\frac{2}{(1+|x|^{2N_2+\gamma_1+2})^2}\right)dx
\\&+O( \e^{3}+|a|\e^{2+\alpha}+|a|^2\e^{2-\frac{\alpha}{2}}).\end{aligned}
\end{equation}
By the change of variable $t=r^2$ and $\gamma_1>2N_1+2N_2+4$, we obtain that
\begin{equation*}
\begin{aligned}& \frac{1}{16\pi
 (N_2+\frac{\gamma_1}{2}+1)^4}\intr e^{2W_0}\left(1-\frac{2}{1+|x|^{2N_2+\gamma_1+2}}+16Z_{1}^2-\frac{2}{(1+|x|^{2N_2+\gamma_1+2})^2}\right)dx
\\&= 2\int^\infty_0\Big(\frac{8r^{4N_2+2{\gamma_1}}}{(1+r^{2N_2+%
{\gamma_1}+2})^5}-\frac{10r^{4N_2+2%
{\gamma_1}}}{ (1+r^{2N_2+{\gamma_1}+2})^6}-\frac{2r^{4N_2+2{\gamma_1}}}{(1+r^{2N_2+%
{\gamma_1}+2})^5}+\frac{r^{4N_2+2{\gamma_1}}}{(1+r^{2N_2+%
{\gamma_1}+2})^4}\Big)rdr\\&=  \int^\infty_0\Big(\frac{8t^{2N_2+\gamma_1}  }{(1+t^{ N_2+%
\frac{\gamma_1}{2}+1})^5}-\frac{10t^{2N_2+%
{\gamma_1}}}{ (1+t^{N_2+\frac{\gamma_1}{2}+1})^6}-\frac{2t^{2N_2+{\gamma_1}}}{(1+t^{N_2+%
\frac{\gamma_1}{2}+1})^5}+\frac{t^{2N_2+{\gamma_1}}}{(1+t^{N_2+%
\frac{\gamma_1}{2}+1})^4}\Big)dt\\&=  \int^\infty_0 \frac{-2t^{N_2+\frac{\gamma_1}{2}}}{(N_2+\frac{\gamma_1}{2}+1)}\frac{d}{dt}\Big[\frac{1}{(1+t^{ N_2+%
\frac{\gamma_1}{2}+1})^4}-\frac{1}{(1+t^{ N_2+%
\frac{\gamma_1}{2}+1})^5}\Big]-\frac{2t^{2N_2+{\gamma_1}}}{(1+t^{N_2+%
\frac{\gamma_1}{2}+1})^5}+\frac{t^{2N_2+{\gamma_1}}}{(1+t^{N_2+%
\frac{\gamma_1}{2}+1})^4} dt\\&=  \int^\infty_0\left(\frac{N_2+\frac{\gamma_1}{2}}{N_2+\frac{\gamma_1}{2}+1}-1\right)\frac{2t^{2N_2+%
{\gamma_1}}}{(1+t^{N_2+{\gamma_1}/2+1})^5}+\frac{t^{2N_2+%
{\gamma_1}}}{(1+t^{N_2+{\gamma_1}/2+1})^4}dt
\\&=  \int^\infty_0   \frac{\left(1-\frac{2}{N_2+\frac{\gamma_1}{2}+1}\right)t^{2N_2+%
{\gamma_1}} +t^{3N_2+\frac{3\gamma_1}{2}+1}}{(1+t^{N_2+{\gamma_1}/2+1})^5}dt:=T>0. \end{aligned}
\end{equation*}
We see that $C_{j,\e}^a\equiv0$ for $j=1,2,$ if $a=(a_{1},a_{2})\in \mathbb{R}^{2}$ satisfies
 \begin{equation}\label{finitered}
\begin{aligned}
\left(\begin{array}{ll} T \quad
\quad \  0 \\ \  \  0 \quad \quad
\quad T \end{array}\right) \left(\begin{array}{ll}a_1\\
a_2\end{array}\right)+O( \e  +|a|\e^{ \alpha}+|a|^2\e^{ -\frac{\alpha}{2}})=0. \end{aligned}
\end{equation}
Since $T>0$,   we can obtain $a_{\varepsilon}\in \mathbb{R}^{2}$ satisfying \eqref{finitered} and  $%
|a_{\varepsilon}|=O(\e)$, which implies  $C_{j,\e}^a
\equiv0$ for $j=1,2$.
Now we complete the proof of Proposition \ref{propgoal}.
\end{proof}
\subsection{Completion of Proof of Theorem \ref{T121}  and Corollary \ref{T122} }
By  Proposition \ref{existence_of_eta}-\ref{propgoal}, we obtain a
solution $(v_{1,\varepsilon },v_{2,\varepsilon })$ of \eqref{cs4} where
$
v_{1,\varepsilon }\left( x\right) =V_{1,\e}^a\left( x\right)
  +\eta _{1,\varepsilon }^{a}\left(
x\right),$ and $v_{2,\varepsilon }\left( x\right) =V_{2,\e}^a\left(x\right)
  +\eta _{2,\varepsilon }^{a}\left(
\e x\right).$
By using  \eqref{rmk0} and  $ \Vert \eta_{1,\varepsilon}^{a}\Vert_{L^{\infty}(\mathbb{R}%
^{2})}+\Big\|\frac{\eta_{2,\varepsilon}^a(x)}{\ln(2+|x|)}\Big\|_{L^\infty(\mathbb{R}^2)} =o(1),$   we complete the proof of  Theorem \ref{T121}.
Finally, Corollary
\ref{T122} follows from Theorem \ref{T121} since  when $N_1=1$, any non-topological (radial) solution of \eqref{cs1} is non-degenerate (see
\cite{CFL})
$\hfill\square$

\section{Invertibility of Linearized operator}\label{sectiona}
%\subsection{Estimations for the elements in $X^1_\alpha\cup X^2_\alpha$}

Recall the operator $\mathfrak{L}_0: X^2_\alpha\rightarrow Y_\alpha$
defined by $
\mathfrak{L}_0u=\Delta u +2e^{W_0(x)}u.
$ In  \cite{CI,CL}, it has been known  that the index   of a Fredholm operator $\mathfrak{L}_0 : X^2_\alpha\to Y_\alpha$ is  $1$,
\[\textrm{ind}\mathfrak{L}_0= \textrm{dim}(\textrm{ker} \mathfrak{L}_0) - \textrm{codim}(\textrm{ran}\mathfrak{L}_0)=1.\]
More precisely, we have the following result for the operator $Q\mathfrak{L}_0$.
\begin{lemma}\label{nondlemma}\cite{CI,CL}
\begin{equation*}
Q\mathfrak{L}_0:E_\alpha\rightarrow F_\alpha= \left \{
\begin{array}{l}
\{h\in Y_{\alpha}\ |\ \int_{\mathbb{R}^{2}}hZ_{i}dx=0,\ \ i=1,2\} \ \ %
\mbox{if}\ \frac{{\gamma_{1}}}{2}\in\mathbb{Z}, \\
Y_{\alpha} \ \ \mbox{if}\ \frac{{\gamma_{1}}}{2}\notin\mathbb{Z}.%
\end{array}
\right.
\end{equation*} is isomorphism.
\end{lemma}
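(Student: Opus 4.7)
The idea is to combine the Fredholm property of $\mathfrak{L}_0\colon X^2_\alpha\to Y_\alpha$ (index $1$, as cited from \cite{CI,CL}) with an explicit identification of the cokernel, using formal self-adjointness. The key preliminary observation I would verify is an integration-by-parts identity: for every $\eta\in X^2_\alpha$ and every $Z\in\ker\mathfrak{L}_0\cap Y_\alpha$,
\begin{equation*}
\int_{\mathbb{R}^2}(\mathfrak{L}_0\eta)\,Z\,dx \;=\; 0.
\end{equation*}
This is legitimate because $\eta\rho_2\in L^2$ with $\rho_2=(1+|x|)^{-1-\alpha/2}$, while $Z_1,Z_2$ decay like $|x|^{-N_2-\gamma_1/2-1}$ at infinity, which combined with the choice $\alpha<2(N_2+(\gamma_1-2)/2)$ makes $e^{W_0}\eta Z\in L^1$ and kills all boundary contributions after cutting off near $0$ and $\infty$. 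Note $Z_0\not\in Y_\alpha$ (since $Z_0\to\pm1$), so only $Z_1,Z_2$ produce genuine functionals on $Y_\alpha$.

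\textbf{Case $\gamma_1/2\notin\mathbb{N}$.} Here $\ker\mathfrak{L}_0=\mathrm{Span}\{Z_0\}$ and the index identity $1=1-\mathrm{codim}\,\mathrm{ran}(\mathfrak{L}_0)$ forces $\mathfrak{L}_0$ to be surjective; since $Q=\mathrm{Id}$ and $F_\alpha=Y_\alpha$, the problem reduces to showing $\mathfrak{L}_0\colon E_\alpha\to Y_\alpha$ is bijective. Given $h\in Y_\alpha$, take any preimage $\tilde\eta\in X^2_\alpha$ and add a unique multiple of $Z_0$ to land in $E_\alpha$; uniqueness and existence hold because the single constraint $\int e^{W_0}Z_0\,\eta\,dx=0$ is non-degenerate on $\mathrm{Span}\{Z_0\}$. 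Injectivity is immediate from the same non-degeneracy.

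\textbf{Case $\gamma_1/2\in\mathbb{N}$.} Now $\dim\ker\mathfrak{L}_0=3$ and $\mathrm{codim}\,\mathrm{ran}(\mathfrak{L}_0)=2$. The integration-by-parts identity shows $Z_1,Z_2$ lie in the annihilator of $\mathrm{ran}(\mathfrak{L}_0)$ inside $Y_\alpha^*$; since they are linearly independent (one is even, the other odd in an angular variable via \eqref{z0}), a dimension count gives $\mathrm{ran}(\mathfrak{L}_0)=\{h\in Y_\alpha:\int hZ_i\,dx=0,\ i=1,2\}=F_\alpha$. For surjectivity of $Q\mathfrak{L}_0$, pick any $h\in F_\alpha$, solve $\mathfrak{L}_0\tilde\eta=h$ in $X^2_\alpha$ (possible since $h\in\mathrm{ran}\,\mathfrak{L}_0$), and correct by an element of $\mathrm{Span}\{Z_0,Z_1,Z_2\}$ so that the three constraints defining $E_\alpha$ are met; solvability of this $3\times 3$ linear system follows from the fact that the Gram-type matrix $\bigl(\int e^{W_0}Z_iZ_j\,dx\bigr)_{0\le i,j\le 2}$ is diagonal with nonzero entries, by angular orthogonality. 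For injectivity, if $\eta\in E_\alpha$ and $Q\mathfrak{L}_0\eta=0$, then $\mathfrak{L}_0\eta=c_1Z_1+c_2Z_2$; testing against $Z_j$ via the integration-by-parts identity forces $c_j\int Z_j^2\,dx=0$, hence $\mathfrak{L}_0\eta=0$, so $\eta\in\ker\mathfrak{L}_0$, and the three $E_\alpha$-orthogonality conditions then imply $\eta=0$.

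\textbf{Main obstacle.} The technical heart is Case~$\gamma_1/2\in\mathbb{N}$: one must rigorously identify $\mathrm{Span}\{Z_1,Z_2\}$ as the full cokernel of $\mathfrak{L}_0$ in $Y_\alpha^*$, not merely a subset. This relies on (i) justifying the integration-by-parts identity in the weighted Sobolev setting despite the singular weights $e^{W_0}$ near $0$, (ii) matching dimensions via the index theorem from \cite{CI,CL}, and (iii) checking that $Z_1,Z_2$ survive as non-trivial functionals on $Y_\alpha$ for the chosen range of $\alpha$. Once this is in place, the rest is bookkeeping with the non-degenerate pairings $\int e^{W_0}Z_iZ_j\,dx$.
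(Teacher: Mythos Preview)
Your proposal is correct and follows essentially the same route as the paper: prove injectivity by testing $\mathfrak{L}_0\eta$ against $Z_1,Z_2$ via integration by parts to kill the projection constants, then use $\ker\mathfrak{L}_0=\mathrm{Span}\{Z_i\}$ together with the $E_\alpha$ orthogonality conditions; prove surjectivity by first solving $\mathfrak{L}_0\tilde\eta=h$ in $X^2_\alpha$ and then correcting by kernel elements using the non-degenerate diagonal matrix $(\int 4e^{W_0}Z_iZ_j\,dx)$. The only cosmetic difference is that the paper quotes $\mathrm{Im}\,\mathfrak{L}_0=F_\alpha$ directly from \cite[Proposition~2.2]{CI}, whereas you re-derive it from the index formula plus the self-adjoint pairing with $Z_1,Z_2$; both lead to the same place.
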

\begin{proof}
(i) one-to-one: Suppose that $Q\mathfrak{L}_0u=Q[\Delta u +2e^{W_0}u]=0$ and $u\in E_\alpha$. By  Lemma \ref{w},   we have  $\int_{\mathbb{R}^2}Z_i(\Delta u +2e^{W_0}u)dx=0$ for $i=1,2$, and thus
 the definition of $Q$ implies that  \[\Delta u +2e^{W_0}u=0.\]
In view of  \cite[Lemma 2.4]{CI} and \cite[Lemma 2.2]{CL}, we have
\begin{equation*}
\left \{
\begin{array}{l}
\mbox{ker}\ \mathfrak{L}_0= \mbox{Span} \{Z_0, Z_1, Z_2\} \ \ \mbox{if}\ \frac{%
{\gamma_{1}}}{2}\in\mathbb{Z}, \\
\mbox{ker}\ \mathfrak{L}_0= \mbox{Span} \{Z_0\} \ \ \mbox{if}\ \frac{{\gamma_{1}}}{2}%
\notin\mathbb{Z},%
\end{array}
\right.
\end{equation*}
which implies
\begin{equation*}
u=\left \{
\begin{array}{l}
 c_0Z_0+c_1 Z_1+c_2 Z_2 \ \ \mbox{if}\ \frac{%
{\gamma_{1}}}{2}\in\mathbb{Z}, \\
c_0Z_0\ \ \mbox{if}\ \frac{{\gamma_{1}}}{2}%
\notin\mathbb{Z},%
\end{array}
\right.
\end{equation*}
for some constant $c_i\in\mathbb{R}$.
From $u\in E_\alpha$ and $\Delta Z_{i}+2e^{W_0}Z_i=0$, we see that
\begin{equation*}
0=\left \{
\begin{array}{l}\int_{\mathbb{R}^{2}}(-\Delta
Z_{i}+2e^{W_{0}}Z_{i})u dx= c_i\int_{\mathbb{R}^{2}} 4e^{W_{0}}Z_{i}^2 dx\ \ \mbox{for}\ \ i=0,1,2\  \ \mbox{if}\ \frac{%
{\gamma_{1}}}{2}\in\mathbb{Z}, \\
\int_{\mathbb{R}^{2}}(-\Delta
Z_{0}+2e^{W_{0}}Z_{0})u dx=c_0\int_{\mathbb{R}^{2}} 4e^{W_{0}}Z_{0}^2 dx\ \ \mbox{if}\ \frac{%
{\gamma_{1}}}{2}\notin\mathbb{Z},
\end{array}
\right.\end{equation*} which implies $c_i\equiv0$ and $u\equiv0$.
So we prove that $Q\mathfrak{L}_0$ is one-to-one from $E_\alpha$ to $F_\alpha$.

\medskip\noindent
(ii) onto:  In \cite[Proposition 2.2]{CI}, it was shown that
\begin{equation}\label{image}
\textrm{Im}\mathfrak{L}_0=F_{\alpha}= \left \{
\begin{array}{l}
\{h\in Y_{\alpha}\ |\ \int_{\mathbb{R}^{2}}hZ_{i}dx=0,\ \ i=1,2\} \ \ %
\mbox{if}\ \frac{{\gamma_{1}}}{2}\in\mathbb{Z}, \\
Y_{\alpha} \ \ \mbox{if}\ \frac{{\gamma_{1}}}{2}\notin\mathbb{Z}.%
\end{array}
\right.
\end{equation}In view of \eqref{image}, we see that for any $f\in F_\alpha$, there exists $u_f\in X_\alpha^2$ such that $(Q\mathfrak{L}_0)u_f=\mathfrak{L}_0u_f=\Delta u_f+2e^{W_0}u_f=f$. Let \[c_i:=\int_{\mathbb{R}^2}(-\Delta Z_i+2e^{W_0}Z_i)u_fdx\ \textrm{ for}\ \ i=1,2.\]
   We define \[\tilde{u}_f:=u_f-\sum_{i=0}^{2}\frac{c_i}{\int_{\mathbb{R}^2}4e^{W_0}Z_i^2dx}Z_i.\] Then we get that \[\tilde{u}_f\in E_\alpha, \ \ (Q\mathfrak{L}_0)\tilde{u}_f=\mathfrak{L}_0\tilde{u}_f=\Delta \tilde{u}_f+2e^{W_0}\tilde{u}_f=f.\]
   Thus we prove that $Q\mathfrak{L}_0$ is onto from $E_\alpha$ to $F_\alpha$. Now we complete the proof of Lemma \ref{nondlemma}.
\end{proof}
In \cite{DEM},  del Pino, Esposito, Musso recently studied  general linearized operators $%
\mathfrak{L}_a u=\Delta u +2e^{W_a}u$ for $a\neq0$. We also refer to \cite{LWY} for the $SU(n+1)$ Toda system.

\medskip

In order to prove Theorem \ref{thma}, firstly, we are going to   prove \eqref{importantineq}. To do it, we argue by contradiction and suppose that as $\e\to0$,
there are sequences  $a=a(\e)\to0$,    $(w_{1,\varepsilon},w_{2,\varepsilon})\in X_{\alpha}^{1}\times
E_{\alpha}$, and $(h_{1,\varepsilon},h_{2,\varepsilon})\in
Y_{\alpha}\times F_{\alpha}$ satisfying
$
\mathbb{QL}_{\varepsilon}^{a}(w_{1,\varepsilon},w_{2,\varepsilon
})=(h_{1,\varepsilon},h_{2,\varepsilon}),  $ and
\begin{equation}\left \{
\begin{array}{l}
 |\ln \e|^{-1}\left(\|w_{1,\e}\|_{L^{\infty}(\RN)}+ \Big\|\frac{w_{2,\e}}{\ln(2+|x|)}\Big\|_{L^{\infty}(\RN)}\right)+\sum_{i=1}^2 \Vert w_{i,\e}(x)\rho_{2}(x)\Vert_{L^2(\mathbb{R}^2)}=1,  \\ \lim_{\varepsilon\to0}
\left(\Vert h_{1,\varepsilon}\Vert_{Y_{\alpha}}+\Vert
h_{2,\varepsilon}\Vert_{Y_{\alpha}}\right)=0.  \label{eqto1}
\end{array}
\right.
\end{equation}There are
constants $ c_{i,\varepsilon} \in\mathbb{R}, i=1,2$ satisfying
\begin{equation}
\begin{aligned}\label{c} \left \{ \begin{array}{ll} &\Delta
w_{1,\varepsilon}+f'(V(x))w_{1,\varepsilon}-\varepsilon^2e^{W_a(\varepsilon
x)}w_{2,\varepsilon}(\varepsilon x)=h_{1,\varepsilon}(x), \\&\Delta
w_{2,\varepsilon}+2e^{W_0(x)}w_{2,\varepsilon}-\frac{f'(V(x/\varepsilon))}{2%
\varepsilon^2}w_{1,\varepsilon}(x/\varepsilon)=h_{2,\varepsilon}(x)+%
\sum_{i=1}^2c_{i,\varepsilon}Z_{i}(x),\ \textrm{and} \end{array}\right. \end{aligned}
\end{equation}
 \begin{equation}\label{intcfind}
\begin{aligned}&\int_{\mathbb{R}^2}\Big[\Delta
w_{2,\varepsilon}+2e^{W_0}w_{2,\varepsilon}-\frac{f'(V(x/\varepsilon))}{2%
\varepsilon^2}w_{1,\varepsilon}(x/\varepsilon)  -h_{2,\varepsilon}(x)-
\sum_{i=1}^2c_{i,\varepsilon}Z_{i}(x)\Big]Z_{j}(x)dx=0,\end{aligned}
\end{equation}for   $j=1,2.$ We note that   if $\frac{\gamma_1}{2}\notin\mathbb{N}$, then we let $c_{i,\e}=0$, $i=1,2$ since
  there is no projection and  $Q_0$ is the
identity map from $Y_\alpha$ to $Y_\alpha$ in this case.
The constant  $c_{i,\e}$, $i=1,2$ satisfies the following property.
\begin{lemma}\label{magofcc}
$\lim_{\e\to0}c_{i,\varepsilon}=0$  for  $i=1,2$.
\end{lemma}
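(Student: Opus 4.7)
The plan is to mimic the argument used in the proof of Corollary~\ref{magofc}, which produced the estimate $c_i = O(\varepsilon^{2-\alpha/2}\|w_1\rho_2\|_{L^2(\mathbb{R}^2)})$ for an analogous linear system. The case $\frac{\gamma_1}{2}\notin\mathbb{N}$ is trivial because then $c_{i,\varepsilon}\equiv 0$ by definition, so I will concentrate on $\frac{\gamma_1}{2}\in\mathbb{N}$.

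First, I would start from the orthogonality conditions \eqref{intcfind}. Using Lemma \ref{integrationbyparts} together with $\mathfrak{L}_0 Z_j = \Delta Z_j + 2e^{W_0} Z_j = 0$ (which holds for $j=1,2$ when $\frac{\gamma_1}{2}\in\mathbb{N}$), the terms involving $w_{2,\varepsilon}$ vanish. Since $h_{2,\varepsilon}\in F_\alpha$, we also have $\int_{\mathbb{R}^2} h_{2,\varepsilon} Z_j\,dx = 0$. Exploiting the orthogonality $\int_{\mathbb{R}^2} Z_i Z_j\,dx = \left(\int_{\mathbb{R}^2} Z_i^2\,dx\right)\delta_{ij}$ (which is immediate from the explicit form \eqref{z0} and the angular integration, for $i,j\in\{1,2\}$), \eqref{intcfind} reduces to
\begin{equation*}
c_{j,\varepsilon}\int_{\mathbb{R}^2} Z_j^2\,dx = -\int_{\mathbb{R}^2} \frac{f'(V(x/\varepsilon))}{2\varepsilon^2} w_{1,\varepsilon}(x/\varepsilon) Z_j(x)\,dx, \qquad j=1,2.
\end{equation*}

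Next I would estimate the right-hand side by the change of variables $y = x/\varepsilon$ and Cauchy--Schwarz, exactly as in the proof of Corollary~\ref{magofc}:
\begin{equation*}
\Big|\int_{\mathbb{R}^2} f'(V(y)) w_{1,\varepsilon}(y) Z_j(\varepsilon y)\,dy\Big|
\leq \|w_{1,\varepsilon}\rho_2\|_{L^2(\mathbb{R}^2)}\,\big\||f'(V(y))|\rho_2^{-1}(y) Z_j(\varepsilon y)\big\|_{L^2(\mathbb{R}^2)}.
\end{equation*}
Using $f'(V(y)) = O((1+|y|)^{-2\gamma_1 + 2N_1})$ from the asymptotic behavior of $V$, $\rho_2(y)^{-1} = (1+|y|)^{1+\alpha/2}$, and the explicit form $Z_j(\varepsilon y) = O\big(|\varepsilon y|^{N_2+\gamma_1/2+1}/(1+|\varepsilon y|^{2N_2+\gamma_1+2})\big)$, one splits the $L^2$ integral at $|y|\sim\varepsilon^{-1}$. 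The inner region gives $O(\varepsilon^{N_2+\gamma_1/2+1})$ and the outer region gives $O(\varepsilon^{2\gamma_1 - 2N_1 - 2 - \alpha/2})$; the constraint $\gamma_1 > 2N_1+2N_2+4$ ensures both exponents are at least $2-\alpha/2$. Therefore
\begin{equation*}
c_{j,\varepsilon} = O\big(\varepsilon^{2-\alpha/2}\|w_{1,\varepsilon}\rho_2\|_{L^2(\mathbb{R}^2)}\big) = O(\varepsilon^{2-\alpha/2}),
\end{equation*}
where the last bound uses the normalization \eqref{eqto1}. In particular, $c_{i,\varepsilon}\to 0$ as $\varepsilon\to 0$.

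There is no serious obstacle here: the only subtle point is to verify that the integration by parts is legitimate, i.e.\ that the boundary terms at infinity vanish. This follows from $w_{2,\varepsilon}\in E_\alpha\subseteq X_\alpha^2$, the decay $|Z_j(x)|\rho_2^{-1}(x)$ being in $L^2$ (guaranteed by the hypothesis $0<\alpha<2(N_2+\frac{\gamma_1-2}{2})$ embedded in Definition~\ref{D3}), and the regularity of $e^{W_0}$; this is exactly the integration-by-parts framework already invoked in the proof of Corollary~\ref{magofc}.
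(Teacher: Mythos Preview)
Your proof is correct and follows exactly the same approach as the paper: the paper's one-line proof simply invokes Corollary~\ref{magofc} together with the normalization $\|w_{1,\varepsilon}\rho_2\|_{L^2(\mathbb{R}^2)}\le 1$ from \eqref{eqto1}, and you have reproduced the proof of that corollary in full detail to reach the identical bound $c_{j,\varepsilon}=O(\varepsilon^{2-\alpha/2})$.
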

\begin{proof}
The proof follows from Corollary \ref{magofc} and the assumption $\Vert w_{1,\e}\rho_{2}\Vert_{L^2(\mathbb{R}^2)}\le1$.\end{proof}
We have the following asymptotic behavior of $w_{i,\e}$, $i=1,2$ as $\e\to0$.
\begin{lemma}
\label{order}We have as $\varepsilon \to0$,

(i) $w_{1,\varepsilon}\to0\ \textrm{in}\  C^0_{\textrm{loc}}(\mathbb{R}^2)$  and $\lim_{\e\to0}\|f'(V )w_{1,\e} \|_{Y_\alpha}=0$,

(ii)
$w_{2,\varepsilon}\to0\ \textrm{in}\  C^0_{\textrm{loc}}(\mathbb{R}^2\setminus\{0\})$ and $\lim_{\e\to0}\|e^{W_0}w_{2,\e}\|_{Y_{\alpha}}=0$.
\end{lemma}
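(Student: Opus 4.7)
The plan is a contradiction/extraction-of-limits argument run in parallel for (i) and (ii). From the normalization the sequences $\|w_{i,\e}\rho_2\|_{L^2(\RN)}\le 1$ are bounded, so $w_{i,\e}$ is uniformly $L^2_{\textrm{loc}}$-bounded. Inspecting the equations in \eqref{c} on any compact $K\subset\RN$ (respectively $K\subset\RN\setminus\{0\}$), I will verify that all perturbative right-hand-side terms are $L^2(K)$-small uniformly: the coupling $\e^2e^{W_a(\e x)}w_{2,\e}(\e x)=O(\e^{\gamma_1+2N_2+2}|\ln\e|)$ for $x$ bounded, and $\e^{-2}f'(V(x/\e))w_{1,\e}(x/\e)=O(\e^{2\gamma_1-2N_1-2}|x|^{-2\gamma_1+2N_1}|\ln\e|)\to 0$ whenever $|x|\ge r>0$; the data $h_{i,\e}\to 0$ in $Y_\alpha$ by assumption, and $c_{i,\e}Z_i\to 0$ by Lemma~\ref{magofcc}. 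Local elliptic regularity then yields uniform $H^2_{\textrm{loc}}$-bounds, and along a subsequence $w_{1,\e}\to w_1^{\ast}$ in $C^0_{\textrm{loc}}(\RN)$ and $w_{2,\e}\to w_2^{\ast}$ in $C^0_{\textrm{loc}}(\RN\setminus\{0\})$, with $\Delta w_1^{\ast}+f'(V)w_1^{\ast}=0$ in $\RN$ and $\Delta w_2^{\ast}+2e^{W_0}w_2^{\ast}=0$ in $\RN\setminus\{0\}$ (the singularity of the latter at $0$ is removable by the local $L^2$-control).

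Next I will show that both limits vanish. For $w_2^{\ast}$, the orthogonality conditions $\int_{\RN}(-\Delta Z_i+2e^{W_0}Z_i)w_{2,\e}\,dx=0$ defining $E_\alpha$ pass to the limit by weak convergence $w_{2,\e}\rho_2\rightharpoonup w_2^{\ast}\rho_2$ in $L^2(\RN)$, since $(-\Delta Z_i+2e^{W_0}Z_i)/\rho_2\in L^2(\RN)$ by the rapid decay of $Z_i$ and $e^{W_0}$. Thus $w_2^{\ast}\in E_\alpha$ with $Q\mathfrak{L}_0 w_2^{\ast}=0$, and Lemma~\ref{nondlemma} forces $w_2^{\ast}\equiv 0$. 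For $w_1^{\ast}$, the delicate point is to produce $w_1^{\ast}\in L^\infty(\RN)$ so that the non-degeneracy hypothesis on $V$ applies: since $w_{1,\e}\in X^1_\alpha$ forbids logarithmic growth, Lemma~\ref{w} applied to the Green's representation forces $\int_{\RN}\Delta w_{1,\e}\,dy=0$, and writing $w_{1,\e}(x)=c_\e+(2\pi)^{-1}\int[\ln|x-y|-\ln(2+|x|)]\Delta w_{1,\e}(y)\,dy$ together with the fast decay of $\Delta w_{1,\e}=-f'(V)w_{1,\e}+o(1)$ and the weighted $L^2$-bound on $w_{1,\e}$ gives a uniform $L^\infty$-control on $w_{1,\e}-c_\e$; passing to the limit yields a bounded $w_1^{\ast}$, and the non-degeneracy of $V$ then forces $w_1^{\ast}\equiv 0$. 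Uniqueness of the limit upgrades subsequential to full sequential convergence.

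For the $Y_\alpha$-decay statements I will split $\|f'(V)w_{1,\e}\|_{Y_\alpha}^2=\int_{|x|\le R}+\int_{|x|>R}$; the first piece vanishes as $\e\to 0$ by the $C^0_{\textrm{loc}}$-convergence (the mild log-singularities of $V$ at the $p_j$'s produce only bounded $L^2$-contributions), while the second is dominated by $\|w_{1,\e}\rho_2\|_{L^2(\RN)}^2\cdot\sup_{|x|>R}|f'(V)|^2(1+|x|)^{4+2\alpha}=O((1+R)^{-4\gamma_1+4N_1+4+2\alpha})$, which vanishes as $R\to\infty$ uniformly in $\e$ thanks to $\gamma_1>2N_1+2$. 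The analogous decomposition applied with $e^{W_0}(x)\sim|x|^{-\gamma_1-2N_2-4}$ handles $\|e^{W_0}w_{2,\e}\|_{Y_\alpha}$. The main difficulty in this plan is the $L^\infty$-upgrade for $w_1^{\ast}$: the a priori pointwise bound $\|w_{1,\e}\|_{L^\infty(\RN)}=O(|\ln\e|)$ diverges and cannot be passed to the limit naively, so the full strength of the $X^1_\alpha$-structure (no logarithmic growth) must be exploited via the Green's representation.
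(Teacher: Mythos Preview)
Your overall strategy---extract $C^0_{\textrm{loc}}$ limits, identify them as kernel elements, and kill them via non-degeneracy and orthogonality---matches the paper's, but the order you propose conceals a genuine gap.

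The claim that the singularity of $w_2^{\ast}$ at $0$ is removable ``by the local $L^2$-control'' is false: an $L^2_{\textrm{loc}}$ function can carry a Dirac mass in its Laplacian (witness $\ln|x|$). Concretely, the source $\tfrac{1}{2\e^2}f'(V(x/\e))w_{1,\e}(x/\e)$ does not disappear as a measure; it concentrates at the origin with mass $\tfrac{1}{2}\int_{\RN}f'(V)w_1^{\ast}\,dy$, so the distributional limit of the second equation in all of $\RN$ reads
\[
\Delta w_2^{\ast}+2e^{W_0}w_2^{\ast}=\tfrac{1}{2}\Big(\int_{\RN}f'(V)w_1^{\ast}\,dy\Big)\,\delta_0.
\]
Until this coefficient is known to vanish, $w_2^{\ast}\notin X^2_\alpha$, and neither the kernel classification \eqref{kerinx2} nor Lemma~\ref{nondlemma} applies. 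Your route to $w_1^{\ast}\in L^\infty$ is then circular: a ``uniform $L^\infty$-control on $w_{1,\e}-c_\e$'' is unavailable because $\|\Delta w_{1,\e}\|_{Y_\alpha}$ is not bounded (the coupling term $\e^2e^{W_a(\e\cdot)}w_{2,\e}(\e\cdot)$ has $Y_\alpha$-norm of order $\e^{-\alpha/2}$), and arguing at the level of the limit, the vanishing of $\int\Delta w_{1,\e}$ only gives $\int f'(V)w_1^{\ast}=\int e^{W_0}w_2^{\ast}$, which is useful only after $w_2^{\ast}=0$.

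The paper breaks this circularity by producing a \emph{second} integral identity before identifying either limit: it tests the $w_{2,\e}$-equation against $Z_0$, uses Lemma~\ref{integrationbyparts}(ii) together with $Z_0(\e x)\to 1$ for fixed $x$, and obtains in the limit $2\int e^{W_0}w_2^{\ast}=\int f'(V)w_1^{\ast}$. Combined with the first identity this forces both integrals to vanish. Then $w_1^{\ast}\in L^\infty$, non-degeneracy of $V$ gives $w_1^{\ast}\equiv 0$ and hence $\|f'(V)w_{1,\e}\|_{Y_\alpha}\to 0$; only \emph{after} this does a Green's representation estimate show $|w_{2,\e}(x)-w_{2,\e}(x_0)|=O\big((1+|\ln|x||)\|f'(V)w_{1,\e}\|_{Y_\alpha}\big)+o(1)$ for $2\e\le|x|\le 2$, which removes the singularity at $0$ and allows your $E_\alpha$-orthogonality argument to finish. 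The $Z_0$-testing step is the missing ingredient in your plan.
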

\begin{proof}
\textit{Step 1}. In this step, we are going to find the limit equations for $w_{1,\e}$ and $w_{2,\e}$.\\
 First, we note that \begin{equation}\label{elp001}
\begin{aligned}&\lim_{\e\to0} \|\e^2e^{W_a(\e x)}w_{2,\e}(\e x)\|_{L^2(\RN)}\le \lim_{\e\to0}\left(\e\sup_{x\in\RN}|e^{W_a(x)}\rho_2^{-1}(x)|\|w_{2,\e}\rho_2\|_{L^2(\RN)}\right)=0.
\end{aligned} \end{equation}Since $f'(V(x))\rho_2^{-1}(x)=O((1+|x|)^{-2\gamma_1+2N_1+1+\frac{\alpha}{2}})$, $\gamma_1>2N_1+2$, and $0<\alpha<\frac{1}{2}$, we see that  for any $\delta>0$,
\begin{equation}\label{elp0001}
\begin{aligned} &\lim_{\e\to0} \Big\| \frac{f'(V(\frac{x}{\e}))w_{1,\e}(\frac{x}{\e})}{ \e^2} \Big\|_{L^2(\RN\setminus B_{\delta}(0))}
\\&\le\lim_{\e\to0} \Big( \e^{-1}\sup_{x\in\RN\setminus B_{\frac{\delta}{\e}}(0)}|f'(V(x))\rho_2^{-1}(x)|\|w_{1,\e}\rho_2\|_{L^2(\RN)}\Big)=0.\end{aligned}
\end{equation}
There is a constant $c>0$, independent of $\e>0$, satisfying
\begin{equation}\label{elp01}
\begin{aligned} &\|\Delta w_{1,\e}\|_{L^2(\RN)}=\|f'(V)w_{1,\e}-\e^2e^{W_a(\e x)}w_{2,\e}(\e x)-h_{1,\e}\|_{L^2(\RN)}
\\&\le\sup_{x\in\RN}|f'(V(x))\rho_2^{-1}(x)|\|w_{1,\e}\rho_2\|_{L^2(\RN)}+\e\sup_{x\in\RN}|e^{W_a(x)}\rho_2^{-1}(x)|\|w_{2,\e}\rho_2\|_{L^2(\RN)}
\\&+\|h_{1,\e}\|_{L^2(\RN)}\le c. \end{aligned}
\end{equation}
For any $\delta>0$, there is a constant $c_{\delta}>0$, independent of $\e>0$, satisfying
\begin{equation}\label{elp02}
\begin{aligned} &\|\Delta w_{2,\e}\|_{L^2(\RN\setminus B_{\delta}(0))}  = \|2e^{W_0}w_{2,\e}-\frac{f'(V(\frac{x}{\e}))w_{1,\e}(\frac{x}{\e})}{2\e^2}-h_{2,\e}-\sum_{i=1}^2c_{i,\e}Z_{i} \|_{L^2(\RN\setminus B_{\delta}(0))}
\\&\le2\sup_{x\in\RN}|e^{W_0(x)}\rho_2^{-1}(x)|\|w_{2,\e}\rho_2\|_{L^2(\RN)} +
\Big\|h_{2,\e}-\sum_{i=1}^2c_{i,\e}Z_{i}\Big\|_{L^2(\RN)}\\&+(2\e)^{-1}\sup_{x\in\RN\setminus B_{\frac{\delta}{\e}}(0)}|f'(V(x))\rho_2^{-1}(x)|\|w_{1,\e}\rho_2\|_{L^2(\RN)}\le c_\delta.\end{aligned}
\end{equation}
Moreover, by \eqref{eqto1},  we have for any $R>0$,
\begin{equation}\label{elp03}\sum_{i=1}^2\|w_{i,\e}\|_{L^2(B_R(0))}\le (1+R)^{1+\frac{\alpha}{2}}\sum_{i=1}^2\|w_{i,\e}\rho_2\|_{L^2(\RN)}\le(1+R)^{1+\frac{\alpha}{2}}.\end{equation}
By
elliptic estimates and \eqref{elp001}-\eqref{elp03},   ${w}_{1,\varepsilon }\to {w}_{1}$ in
$C^{0}_{\text{loc}}(\mathbb{R}^{2})$ and ${w}_{2,\varepsilon }\to {w}_{2}$ in
$C^{0}_{\text{loc}}(\mathbb{R}^{2}\setminus\{0\})$, where
\begin{equation}\label{limiteqof12}
\begin{aligned}\left \{ \begin{array}{ll}
\Delta {w}_{1}+f'(V){w}_{1}=0\ \ \ \mbox{in}\ \ \ \mathbb{R}%
^{2},
\\ \Delta {w}_{2}+2e^{W_{0}}w_{2}=0\ \ \ \ \mbox{in}\ \ \ \mathbb{R}%
^{2}\setminus\{0\}.
 \end{array}\right. \end{aligned}
\end{equation}

\medskip
\noindent
\textit{Step 2}. We claim that $w_1\equiv0$ and $\lim_{\e\to0}\|f'(V(y))w_{1,\e}(y)\|_{Y_\alpha}=0.$   To prove this claim, we will show that $w_1\in L^\infty(\RN)$ and use the non-degeneracy of the first equation in \eqref{limiteqof12}.\\
By Fatou's Lemma and \eqref{eqto1},  we see that there is a constant $c>0$, satisfying
\begin{equation*}
\begin{aligned}&\|\Delta w_1\|_{Y_\alpha}=\|f'(V)(1+|x|)^{2+\alpha}w_1\rho_2\|_{L^2(\RN)}\le\sup_{x\in\RN}|f'(V)(1+|x|)^{2+\alpha}|\|w_1\rho_2\|_{L^2(\RN)}
\\&\le\sup_{x\in\RN}|f'(V)(1+|x|)^{2+\alpha}|\liminf_{\e\to0}\|w_{1,\e}\rho_2\|_{L^2(\RN)}\le c, \ \ \textrm{and}\end{aligned}
\end{equation*}
\begin{equation}\label{elp3}\|w_{1}\rho_2\|_{L^2(B_R(0))}\le \liminf_{\e\to0}\|w_{1,\e}\rho_2\|_{L^2(\RN)}\le1.\end{equation}
So  $w_1\in X_{\alpha}^2$. Together with Lemma \ref{w}, there is a constant $c_{w_1}$, independent of $x\in\RN$, such that
$
w_1(x)=c_{w_1}-\frac{1}{2\pi}\int_{\mathbb{R}^2}\ln|x-y|f'(V(y)) w_1(y)dy.$
By Lemma \ref{estfory}, we have if $|x|\ge 2$, then \begin{equation}\begin{aligned}\label{nondlinfty}
w_1(x)=c_{w_1}-\frac{\ln|x|}{2\pi}\int_{\mathbb{R}^2}f'(V(y)) w_1(y)dy+O(|x|^{-\frac{\alpha}{2}}\ln|x|\|f'(V)w_1\|_{Y_\alpha}).\end{aligned}
\end{equation}
By  $w_{1,\e}\in X_{\alpha}^1$  and  Lemma \ref{w}-\ref{estfory},         there is a  constant $c_{w_{1,\e}}$, independent of $x\in\RN$, such that
\begin{equation*}\begin{aligned}
w_{1,\e}(x)&=c_{w_{1,\e}}-\frac{\ln|x|}{2\pi}\int_{\mathbb{R}^2}f'(V(y)) w_{1,\e}(y)-e^{W_a(y)}w_{2,\e}(y)-h_{1,\e}dy +O(|x|^{-\frac{\alpha}{2}}\ln|x|\|\Delta w_{1,\e}\|_{Y_\alpha}).\end{aligned}
\end{equation*}Since $w_{1,\e}\in X_{\alpha}^1$, we have $w_{1,\e}\rho_1\in L^2(\RN)$, and thus
\begin{equation}\label{v1}\int_{\mathbb{R}^2}f'(V ) w_{1,\e} -e^{W_a}w_{2,\e}-h_{1,\e}dy=0,\end{equation} which implies
\begin{equation}\label{id1}\int_{\mathbb{R}^2}f'(V(y)) w_{1}(y)-e^{W_0(y)}w_{2}(y)dy=0.\end{equation}
Indeed, \eqref{id1} follows from $\lim_{\e\to0}\|h_{1,\e}\|_{L^1(\RN)}\le \lim_{\e\to0}\|\rho_2\|_{L^2(\RN)}\|h_{1,\e}\|_{Y_\alpha}=0$,
%\begin{equation}\begin{aligned}\label{convw1}&\lim_{\e\to0}\intr f'(V(y))w_{1,\e}(y)dy
%\\&=\lim_{\e\to0}\Big(\int_{B_R(0)}f'(V(y))w_{1,\e}(y)dy+O(R^{-2\gamma_1+2N_1+2+\frac{\alpha}{2}}\|w_{1,\e}\rho_2\|_{L^2(\RN)})\Big)
%\\&=\intr f'(V(y))w_{1}(y)dy,\ \ \textrm{and}\end{aligned}
%\end{equation}
%\begin{equation}\begin{aligned}\label{convw2}&\lim_{\e\to0}\intr e^{W_a(y)}w_{2,\e}(y)dy
%=\lim_{\e\to0}\Big(\int_{B_R(0)\setminus B_{\delta}(0)}e^{W_a(y)}w_{2,\e}(y)dy\\&+O((R^{-\gamma_1-2N_2-2+\frac{\alpha}{2}}
%+\delta^{\gamma_1+2N_2+1})\|w_{2,\e}\rho_2\|_{L^2(\RN)})
%\Big)
% =\intr e^{W_0(y)}w_{2}(y)dy,\end{aligned}
%\end{equation}
\begin{equation}\begin{aligned}\label{convw1}&\lim_{\e\to0}\intr f'(V )w_{1,\e} dy
=\intr f'(V)w_{1}dy,\   \textrm{and}\   \lim_{\e\to0}\intr e^{W_a }w_{2,\e} dy
=\intr e^{W_0}w_{2}dy.\end{aligned}
\end{equation}
By multiplying $Z_{0}$  on the second equation of \eqref{c}, we get from Lemma \ref{integrationbyparts} that
\begin{equation}\label{st1}
\begin{aligned}0&=\intr \Big(\Delta
w_{2,\varepsilon}+2e^{W_0}w_{2,\varepsilon}-\frac{f'(V(x/\varepsilon))}{2%
\varepsilon^2}w_{1,\varepsilon}(x/\varepsilon) -h_{2,\varepsilon} -
\sum_{i=1}^2c_{i,\varepsilon}Z_{i} \Big)Z_{0} dx
\\&=\intr \Big(2e^{W_0(x)}w_{2,\varepsilon}-\frac{f'(V(x/\varepsilon))}{2%
\varepsilon^2}w_{1,\varepsilon}(x/\varepsilon)-h_{2,\varepsilon}(x)-
\sum_{i=1}^2c_{i,\varepsilon}Z_{i}(x)\Big)dx
\\&-\intr \frac{f'(V(x))w_{1,\e}(x)}{2}Z_{0}(\e x)dx -\intr (h_{2,\varepsilon}(x)+
\sum_{i=1}^2c_{i,\varepsilon}Z_{i}(x))Z_{0}(x)dx.\end{aligned}
\end{equation}Here we note that
\begin{equation}\label{st2}
\begin{aligned}&\intr  f'(V(x))w_{1,\e}(x) Z_{0}(\e x)dx  =\intr f'(V (x))w_{1,\e}(x) \Big(1-
\frac{2|\e x|^{2N_2+\gamma_1+2}}{1+|\e x|^{2N_2+\gamma_1+2}}\Big)dx
\\&=\intr f'(V )w_{1,\e}  dx+O(\e^{2N_2+ \gamma_1+2}+
\e^{2\gamma_1-2N_1-2-\frac{\alpha}{2}})\|w_{1,\e}\rho_2\|_{L^2(\RN)}. \end{aligned}
\end{equation}
In view of \eqref{convw1}-\eqref{st2}, $\lim_{\e\to0}\|h_{2,\e}\|_{Y_\alpha}=0$, and $\lim_{\e\to0}c_{i,\e}=0$,  we get that
 \begin{equation}\label{st3}
\begin{aligned}0=\intr 2e^{W_0(x)}w_{2}(x)-f'(V(x))w_{1}(x)dx.\end{aligned}
\end{equation}By \eqref{id1} and \eqref{st3}, we have
  $\int_{\mathbb{R}^2}f'(V ) w_1 dx=\intr  e^{W_0 }w_{2} dx=0.$
Together with \eqref{nondlinfty}, we get   $w_1\in L^\infty(\RN)$.  The non-degeneracy condition for $V$ yields
$w_{1,\e}\to w_1\equiv0$ in $C_{\textrm{loc}}^0(\RN).$  Together with the decay of $e^V$ at infinity,
 we  get $\lim_{\e\to0}\|f'(V(y))w_{1,\e}(y)\|_{Y_\alpha}=0.$ This proves the claim.

\medskip
\noindent
\textit{Step 3}. We claim that $w_2\equiv0$ and $\lim_{\e\to0}\|e^{W_0}w_{2,\e}\|_{Y_{\alpha}}=0$. \\
For $2 \e \le |x|\le 2$ and $x_0\in\partial B_2(0)$, we see from Lemma \ref{estfory} that
\begin{equation}\begin{aligned}\label{smallarea}&w_{2,\e}(x)-w_{2,\e}(x_0)\\& =\frac{1}{2\pi}\intr \ln\frac{|x_0-y|}{|x-y|}(2e^{W_0}w_{2,\e}
-\frac{f'(V(\frac{y}{\e}))w_{1,\e}(\frac{y}{\e})}{2\e^2}-h_{2,\e}-\sum_{i=1}^2 c_{i,\e}Z_{i})dy
\\&=\frac{1}{2\pi}\ln\frac{|x|}{|x_0|}\intr
\frac{f'(V(y))w_{1,\e}(y)}{2}dy
\\&+\frac{1}{2\pi}\intr \Big(\ln\Big|\frac{x}{\e}-y\Big|-\ln\Big|\frac{x}{\e}\Big|+\ln\Big|\frac{x_0}{\e}\Big|-\ln\Big|\frac{x_0}{\e}-y\Big|\Big)
\frac{f'(V(y))w_{1,\e}(y)}{2}dy\\&+O(\|e^{W_0}w_{2,\e}\|_{Y_{\alpha}}+\|h_{2,\e}\|_{Y_{\alpha}}+\sum_{i=1}^2|c_{i,\e}|)
\\&=O(\|f'(V)w_{1,\e}\|_{Y_\alpha}+\|e^{W_0}w_{2,\e}\|_{Y_{\alpha}}+\|h_{2,\e}\|_{Y_{\alpha}}+\sum_{i=1}^2|c_{i,\e}|) +O(\ln|x|\|f'(V)w_{1,\e}\|_{Y_\alpha}).
\end{aligned}\end{equation}
By Step 2, there is a constant $c>0$, independent of $\e>0$ and $x\in B_1(0)\setminus\{0\}$, such that
$|w_2(x)|=\lim_{\e\to0}|w_{2,\e}(x)|\le c,$ which implies from  \eqref{limiteqof12}
that $\Delta {w}_{2}+2e^{W_{0}}w_{2}=0\  \mbox{in}\  \mathbb{R}%
^{2}.$
By Fatou's Lemma and \eqref{eqto1},  we see that there is a constant $c>0$, satisfying
\begin{equation*}
\begin{aligned}&\|\Delta w_2\|_{Y_\alpha}=\|2e^{W_{0}}(1+|x|)^{2+\alpha}w_2\rho_2\|_{L^2(\RN)}\le\sup_{x\in\RN}|2e^{W_{0}}(1+|x|)^{2+\alpha}|\|w_2\rho_2\|_{L^2(\RN)}
\\&\le\sup_{x\in\RN}|2e^{W_{0}}(1+|x|)^{2+\alpha}|\liminf_{\e\to0}\|w_{2,\e}\rho_2\|_{L^2(\RN)}\le c,  \end{aligned}
\end{equation*}  and $\|w_{2}\rho_2\|_{L^2(\RN)}\le \liminf_{\e\to0}\|w_{2,\e}\rho_2\|_{L^2(\RN)}\le1.$
So   ${w}_2\in X^{2}_{\alpha}$.
Then by  \eqref{kerinx2},    ${w}_{2}=\sum ^{2}_{i=0}c_{i}Z_{i}$ for some constants
$\{c_{i}\}_{i=0}^2$.
Since $w_{2,\varepsilon}\in E_{\alpha}$, $\Delta Z_{i}+2e^{W_0(x)}Z_{i}=0$,  and ${w}_{2,\varepsilon}\to {w}_{2}=\sum ^{2}_{i=0}c_{i}Z_{i}$ in $C^0_{\textrm{loc}}
(\mathbb{R}^2\setminus\{0\})$,  we get
\begin{equation*}
\begin{aligned} 0&=\lim_{\varepsilon \to0} \int_{\mathbb{R}^2}
{w}_{2,\varepsilon}(-\Delta Z_{i}+2e^{W_0(x)}Z_{i})dx
 =4\lim_{\varepsilon \to0}\int_{\mathbb{R}^2}
{w}_{2,\varepsilon}e^{W_0}Z_{i}dx=4c_i\int_{\mathbb{R}^2}
e^{W_0}Z_{i}^2dx,\ \ \ i=0,1,2, \end{aligned}
\end{equation*}
which implies  $\{c_{i}\}_{i=0,1,2}=0$ and ${w}_{2}\equiv0$.  By  the decay of $e^{W_0}$ at infinity and at zero, we  get
$\lim_{\e\to0}\|e^{W_0}w_{2,\e}\|_{Y_{\alpha}}=0$. This completes the proof.
\end{proof}
By using Lemma \ref{w} and Lemma \ref{order}, we have the following result.
\begin{lemma}\label{onetoone}$\lim_{\e\to0}\left[|\ln \e|^{-1}\left(\|w_{1,\e}\|_{L^{\infty}(\RN)}+ \Big\|\frac{w_{2,\e}}{\ln(2+|x|)}\Big\|_{L^{\infty}(\RN)}\right)+\sum_{i=1}^2 \Vert w_{i,\e}(x)\rho_{2}(x)\Vert_{L^2(\mathbb{R}^2)}\right]=0.$\end{lemma}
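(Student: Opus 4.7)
The plan is to argue by contradiction, combining the assumption \eqref{eqto1} with Lemma \ref{order} and Lemma \ref{magofcc}, and to show that each of the four terms on the LHS of the normalization in \eqref{eqto1} tends to zero. The essential tool is the Green representation formula (Lemma \ref{w}) applied to both components, together with the integral identity \eqref{v1} that is forced by the membership $w_{1,\varepsilon}\in X^1_{\alpha}$ (since $X^1_{\alpha}$ prohibits logarithmic growth at infinity, cf.\ Lemma \ref{estfory}).

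First I would bound $\Vert w_{1,\varepsilon}\Vert_{L^\infty}$. Write $w_{1,\varepsilon}(x) = c_{w_{1,\varepsilon}} + \frac{1}{2\pi}\int_{\RN}\ln|x-y|\Delta w_{1,\varepsilon}(y)\,dy$, and use \eqref{v1} to subtract $\ln|x|\int\Delta w_{1,\varepsilon}\,dy = 0$, giving $w_{1,\varepsilon}(x) - c_{w_{1,\varepsilon}} = \frac{1}{2\pi}\int\ln|1-y/x|\,\Delta w_{1,\varepsilon}(y)\,dy$ for $|x|\geq 2$. Plug in $\Delta w_{1,\varepsilon} = -f'(V)w_{1,\varepsilon} + \varepsilon^2 e^{W_a(\varepsilon y)}w_{2,\varepsilon}(\varepsilon y) + h_{1,\varepsilon}$. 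The first and third summands contribute $o(1)$ via Lemma \ref{order}(i) and \eqref{eqto1}. For the middle summand, change variables $z=\varepsilon y$ to obtain $\int\ln|1-z/(x\varepsilon)|\,e^{W_a(z)}w_{2,\varepsilon}(z)\,dz$, split the domain into $\{|z|\le\varepsilon|x|/2\}$, $\{|z|\ge 2\varepsilon|x|\}$, and the annular region around $\varepsilon|x|$, and bound each piece using the uniform estimate $\Vert w_{2,\varepsilon}\rho_2\Vert_{L^2}\le 1$ combined with the rapid decay of $e^{W_0}$. Since $c_{w_{1,\varepsilon}}\to 0$ by $w_{1,\varepsilon}\to 0$ in $C^0_{\mathrm{loc}}$ (Lemma \ref{order}(i)), the conclusion $|\ln\varepsilon|^{-1}\Vert w_{1,\varepsilon}\Vert_{L^\infty}\to 0$ follows.

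Next I would bound $\bigl\Vert w_{2,\varepsilon}/\ln(2+|x|)\bigr\Vert_{L^\infty}$ similarly. Apply Green's formula to $w_{2,\varepsilon}$, noting that logarithmic growth is now permitted. The total mass $\int\Delta w_{2,\varepsilon}\,dy$ includes the delicate term $\int\tfrac{f'(V(y/\varepsilon))}{2\varepsilon^2}w_{1,\varepsilon}(y/\varepsilon)\,dy$, which by the change of variables $z=y/\varepsilon$ equals $\tfrac{1}{2}\int f'(V)w_{1,\varepsilon}\,dz$; by Lemma \ref{order}(i) this tends to zero. The contributions of $e^{W_0}w_{2,\varepsilon}$ and $h_{2,\varepsilon}$ tend to zero by Lemma \ref{order}(ii) and \eqref{eqto1}, and the projection correction $\sum_{i}c_{i,\varepsilon}Z_{i}$ is $o(1)$ by Lemma \ref{magofcc}. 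The remaining $\ln|x|$-coefficient is then $o(1)$, so dividing by $|\ln\varepsilon|$ produces $o(1)$ as well; the bounded part is absorbed into $\ln(2+|x|)$ and into an additive constant tending to zero.

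Finally, the $L^2$ bounds $\Vert w_{i,\varepsilon}\rho_2\Vert_{L^2}\to 0$ follow by splitting $\RN=B_R(0)\cup B_R(0)^c$. On $B_R$, the local convergence from Lemma \ref{order} together with boundedness of $\rho_2$ yields $\int_{B_R}w_{i,\varepsilon}^2\rho_2^2\to 0$; for $w_{2,\varepsilon}$ near the origin one uses \eqref{smallarea} on $B_2\setminus B_{2\varepsilon}$ and elliptic estimates on $B_{2\varepsilon}$. On $B_R^c$, the pointwise $L^\infty$ controls established above together with the decay $\rho_2^2\sim |x|^{-2-\alpha}$ give a uniformly small tail as $R\to+\infty$. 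The principal obstacle throughout is that the cross term $\varepsilon^2 e^{W_a(\varepsilon x)}w_{2,\varepsilon}(\varepsilon x)$ in $\Delta w_{1,\varepsilon}$ has $Y_{\alpha}$-norm of size $O(\varepsilon^{-\alpha/2})$, so the argument cannot proceed by naive norm bounds on $\Delta w_{1,\varepsilon}$; the resolution is the pointwise Green representation combined with the cancellation $\int\Delta w_{1,\varepsilon}\,dy=0$ that comes from the function space $X^1_\alpha$, which is precisely the mechanism emphasized in the introduction.
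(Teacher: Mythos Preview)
Your plan follows the same route as the paper---Green representation for each component, together with Lemma \ref{order}, Lemma \ref{magofcc}, and the cancellation \eqref{v1}---but two steps in the sketch do not actually deliver the stated conclusions.

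First, for the middle summand in the $w_{1,\varepsilon}$ estimate you invoke only $\|w_{2,\varepsilon}\rho_2\|_{L^2}\le 1$; this gives merely an $O(|\ln\varepsilon|)$ bound on $\int\ln|1-z/(\varepsilon x)|\,e^{W_a(z)}w_{2,\varepsilon}(z)\,dz$ when $2\le|x|\le 2/\varepsilon$ (since $|\ln|\varepsilon x||$ can be of order $|\ln\varepsilon|$ there), so dividing by $|\ln\varepsilon|$ yields only $O(1)$. You must use $\|e^{W_a}w_{2,\varepsilon}\|_{Y_\alpha}\to 0$, which follows from Lemma \ref{order}(ii) together with $a\to 0$, to obtain the required $o(|\ln\varepsilon|)$.

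Second, the claim $c_{w_{1,\varepsilon}}\to 0$ is not justified: writing $c_{w_{1,\varepsilon}}=w_{1,\varepsilon}(0)-\frac{1}{2\pi}\int\ln|y|\,\Delta w_{1,\varepsilon}\,dy$, the $e^{W_a}$ contribution produces a term $-(\ln\varepsilon)\int e^{W_a}w_{2,\varepsilon}\,dz$, so one only gets $c_{w_{1,\varepsilon}}=o(|\ln\varepsilon|)$. Consequently your $B_R\cup B_R^c$ argument for the weighted $L^2$ norm fails: a global bound $|w_{1,\varepsilon}|=o(|\ln\varepsilon|)$ gives a tail $o(|\ln\varepsilon|)^2 R^{-\alpha}$, which is not uniformly small in $\varepsilon$. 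The paper avoids this by subtracting $w_{1,\varepsilon}(0)$ rather than $c_{w_{1,\varepsilon}}$; then the $e^{W_a}$ piece becomes $\int\ln\frac{|\varepsilon x-z|}{|z|}\,e^{W_a}w_{2,\varepsilon}\,dz$, which for $|\varepsilon x|\le 2$ is $O(\|e^{W_a}w_{2,\varepsilon}\|_{Y_\alpha})=o(1)$ with \emph{no} $|\ln\varepsilon|$ factor, yielding the sharper bound $|w_{1,\varepsilon}(x)-w_{1,\varepsilon}(0)|=o(1)(1+\ln|x|)$ on $2\le|x|\le 2/\varepsilon$ (cf.\ \eqref{convtw1}). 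This integrates against $\rho_2^2$ to $o(1)$, while on $|x|\ge 2/\varepsilon$ the bound $o(|\ln\varepsilon|)$ suffices since $\int_{|x|\ge 2/\varepsilon}\rho_2^2=O(\varepsilon^\alpha)$. The same refinement (via \eqref{smallarea}, \eqref{convt1}, \eqref{convt2}) is needed for $w_{2,\varepsilon}$.
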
\begin{proof}
For $|x|\le 2 \e$ and $x_0\in\partial B_2(0)$, we see from Lemma  \ref{w}-\ref{estfory} that
\begin{equation}\label{convt1}\begin{aligned}&w_{2,\e}(x)-w_{2,\e}(x_0)\\&=\frac{1}{2\pi}\intr \ln\frac{|x_0-y|}{|x-y|}(2e^{W_0}w_{2,\e}
-\frac{f'(V(\frac{y}{\e}))w_{1,\e}(\frac{y}{\e})}{2\e^2}-h_{2,\e}-\sum_{i=1}^2 c_{i,\e}Z_{i})dy
\\&=  \frac{1}{2\pi}\ln\frac{\e}{|x_0|}\intr
\frac{f'(V(y))w_{1,\e}(y)}{2}dy
\\&+\frac{1}{2\pi}\intr \Big(\ln\Big|\frac{x}{\e}-y\Big|+\ln\Big|\frac{x_0}{\e}\Big|-\ln\Big|\frac{x_0}{\e}-y\Big|\Big)
\frac{f'(V(y))w_{1,\e}(y)}{2}dy\\&+O(\|e^{W_0}w_{2,\e}\|_{Y_{\alpha}}+\|h_{2,\e}\|_{Y_{\alpha}}+\sum_{i=1}^2|c_{i,\e}|)
\\&=O(\|f'(V)w_{1,\e}\|_{Y_\alpha}|\ln\e|+\|e^{W_0}w_{2,\e}\|_{Y_{\alpha}}+\|h_{2,\e}\|_{Y_{\alpha}}+\sum_{i=1}^2|c_{i,\e}|).
\end{aligned}\end{equation}
For $|x|\ge 2 $ and $x_0\in\partial B_2(0)$, we see from Lemma \ref{w}-\ref{estfory} that
\begin{equation}\label{convt2}\begin{aligned}&w_{2,\e}(x)-w_{2,\e}(x_0)
 \\&=\frac{1}{2\pi}\intr \Big(\ln\Big|\frac{x}{\e}-y\Big|-\ln\Big|\frac{x}{\e}\Big|+\ln\Big|\frac{x_0}{\e}\Big|-\ln\Big|\frac{x_0}{\e}-y\Big|\Big)
\frac{f'(V(y))w_{1,\e}(y)}{2}dy
\\&+\frac{1}{2\pi}\ln\frac{|x|}{|x_0|}\intr\Big(
-2e^{W_0}w_{2,\e}+\frac{f'(V(y))w_{1,\e}(y)}{2}
 +h_{2,\e}+\sum_{i=1}^2 c_{i,\e}Z_{i}\Big)dy\\&+O(\|e^{W_0}w_{2,\e}\|_{Y_{\alpha}}+\|h_{2,\e}\|_{Y_{\alpha}}+\sum_{i=1}^2|c_{i,\e}|)
\\&=O(\|f'(V)w_{1,\e}\|_{Y_\alpha}+\|e^{W_0}w_{2,\e}\|_{Y_{\alpha}}+\|h_{2,\e}\|_{Y_{\alpha}}+\sum_{i=1}^2|c_{i,\e}|)(1+\ln|x|).
\end{aligned}\end{equation}
From Lemma \ref{order}, \eqref{convt1}-\eqref{convt2}, and \eqref{smallarea}, we get that\begin{equation}\label{for2}\lim_{\e\to0}\left[|\ln \e|^{-1}  \Big\|\frac{w_{2,\e}}{\ln(2+|x|)}\Big\|_{L^{\infty}(\RN)} + \Vert w_{2,\e} \rho_{2} \Vert_{L^2(\mathbb{R}^2)}\right]=0.
\end{equation}

For $2\le |x|\le \frac{2}{\e}$,  we see from Lemma \ref{w}-\ref{estfory} that
\begin{equation}\label{convtw1}\begin{aligned}&w_{1,\e}(x)-w_{1,\e}(0)\\& =\frac{1}{2\pi}\intr
\ln\frac{|y|}{|x-y|}(f'(V(y))w_{1,\e}(y)-\e^2e^{W_a(\e y)}w_{2,\e}(\e y)
-h_{1,\e})dy
\\&=\frac{1}{2\pi}\intr  \ln\frac{|x|}{|x-y|}(f'(V )w_{1,\e}
-h_{1,\e})dy +\frac{1}{2\pi}\intr \ln\frac{|\e x-y|}{|y|}e^{W_a(y)}w_{2,\e}(y)dy\\&-\frac{\ln|x|}{2\pi}\intr (f'(V )w_{1,\e}
-h_{1,\e})dy
+\frac{1}{2\pi}\intr \ln|y|(f'(V )w_{1,\e}
-h_{1,\e})dy
\\&=O(\|f'(V)w_{1,\e}\|_{Y_\alpha}+\|e^{W_a}w_{2,\e}\|_{Y_{\alpha}}+\|h_{1,\e}\|_{Y_{\alpha}})(1+\ln|x|).
\end{aligned}\end{equation}
For $|x|\ge \frac{2}{ \e}$, we see from Lemma \ref{w}-\ref{estfory} that
\begin{equation}\label{convtw2}\begin{aligned}&w_{1,\e}(x)-w_{1,\e}(0)
\\&=\frac{1}{2\pi}\intr (\ln|\e x-y|-\ln|\e x|-\ln|y|)e^{W_a(y)}w_{2,\e}(y)dy+\frac{\ln|\e x|}{2\pi}\intr e^{W_a(y)}w_{2,\e}(y)dy\\&-\frac{\ln|x|}{2\pi}\intr (f'(V )w_{1,\e}
-h_{1,\e})dy+O(\|f'(V)w_{1,\e}\|_{Y_\alpha}+\|h_{1,\e}\|_{Y_{\alpha}})
\\&=-\frac{\ln|x|}{2\pi}\intr (f'(V )w_{1,\e}-e^{W_a(y)}w_{2,\e}(y)
-h_{1,\e})dy\\&+
O(\|f'(V)w_{1,\e}\|_{Y_\alpha}+\|e^{W_a}w_{2,\e}\|_{Y_{\alpha}}+\|h_{1,\e}\|_{Y_{\alpha}})(|\ln\e|)\\&=O(\|f'(V)w_{1,\e}\|_{Y_\alpha}+\|e^{W_a}w_{2,\e}\|_{Y_{\alpha}}+\|h_{1,\e}\|_{Y_{\alpha}})(|\ln\e|),
\end{aligned}\end{equation}here we used \eqref{v1}, that is, $\intr (f'(V )w_{1,\e}-e^{W_a(y)}w_{2,\e}(y)
-h_{1,\e})dy=0$.
 Lemma \ref{order}   and \eqref{convtw1}-\eqref{convtw2} yield \begin{equation}\label{for1}\lim_{\e\to0}\left[|\ln \e|^{-1}\ \|w_{1,\e}\|_{L^{\infty}(\RN)}+  \Vert w_{1,\e} \rho_{2} \Vert_{L^2(\mathbb{R}^2)}\right]=0.
\end{equation}
In view of \eqref{for2} and \eqref{for1}, we complete  the proof of Lemma \ref{onetoone}.\end{proof}
\textbf{Proof of Theorem \ref{thma}:}
We see that  Lemma \ref{onetoone} contradict our assumption  \eqref{eqto1} and prove the inequality \eqref{importantineq}.
This   inequality \eqref{importantineq} implies that $\mathbb{QL}_{\varepsilon}^{a}$ is one-to-one from   $X_{\alpha }^{1}\times E_{\alpha }$ to $Y_{\alpha }\times
F_{\alpha }$.

Finally,   we   need to show that $\mathbb{QL}_{\varepsilon}^{a}$ is onto from $X_{\alpha }^{1}\times E_{\alpha }$ to $Y_{\alpha }\times
F_{\alpha }$.
Now  we define $\mathbb{QL}$ such that
$ \mathbb{QL}(w_1,w_2):=(\Delta w_1+f'(V)w_1, {Q}[\Delta
w_2+2e^{W_0}w_2]). $
In view of Lemma \ref{nondlemma},    $\mathbb{QL}$ is an isomorphism from $X_{\alpha }^{1}\times
E_{\alpha }$ to $Y_{\alpha }\times F_{\alpha }$, which implies $\textrm{ind}(\mathbb{QL})=0.$
Moreover, we see  \[
\mathbb{QL}_{\varepsilon}^{a}(w_1,w_2)=\mathbb{QL}(w_1,w_2)+ \mathbb{Q}(\mathbb{L}^a_{\e}-\mathbb{L}))(w_1,w_2).\]
Since $\mathbb{Q}(\mathbb{L}^a_{\e}-\mathbb{L})(w_{1},w_{2})$ is a compact operator,   Fredholm alternative implies \[\textrm{dim}(\textrm{ker} (\mathbb{QL}_{\varepsilon}^{a})) - \textrm{codim}(\textrm{ran}(\mathbb{QL}_{\varepsilon}^{a}))=\textrm{ind}(\mathbb{QL}_{\varepsilon}^{a})=\textrm{ind}(\mathbb{QL})=0.\]  By the inequality \eqref{importantineq}, we get that  $\textrm{dim}(\textrm{ker} (\mathbb{QL}_{\varepsilon}^{a}))=0$, and  $\textrm{codim}(\textrm{ran}(\mathbb{QL}_{\varepsilon}^{a}))=0.$ Now we get that $\mathbb{QL}_{\varepsilon}^{a}$ is onto from $X_{\alpha }^{1}\times E_{\alpha }$ to $Y_{\alpha }\times
F_{\alpha }$ and prove Theorem \ref{thma}.
$\hfill\square $
\section{Basic Estimates}\label{sectionb}Let $(v_1,v_2)$ be a solution of \eqref{cs4} and  $V$ satisfy  \eqref{chernsimonseq}.
We shall show that by a shift of origin,  \eqref{sumqeqc} holds, that is,
$
2\sum^{N_2}_{i=1}q_i+\overrightarrow{c}=0,$ where $\overrightarrow{c} =\frac{1}{2\pi}\int_{\mathbb{R}^2}
ye^{V(y)}(1-2e^{V(y)})dy.
$
 Indeed,  for any  fixed vector $y_0\in\mathbb{R}^2$, we consider the change of coordinate such that $\tilde{y}:=y-y_0$. Then $\tilde{v}_i(\tilde{y}):=v_i(\tilde{y}+y_0)=v_i(y), \ \tilde{p}_j:=p_j-y_0, \ \tilde{q}_j:=q_j-y_0$ satisfy
  \begin{equation*}
\left\{
\begin{array}{l}
\Delta \tilde{v}_{1}+2e^{\tilde{v}_{1}}\left( 1-2e^{\tilde{v}_{1}}\right) -e^{\tilde{v}_{2}}\left(
1-2e^{\tilde{v}_{2}}-e^{\tilde{v}_{1}}\right) =4\pi \sum_{j=1}^{N_{1}}\delta _{\tilde{p}_{j}} \\
\Delta \tilde{v}_{2}+2e^{\tilde{v}_{2}}\left( 1-2e^{\tilde{v}_{2}}\right) -e^{\tilde{v}_{1}}\left(
1-2e^{\tilde{v}_{1}}-e^{\tilde{v}_{2}}\right) =4\pi \sum_{j=1}^{N_{2}}\delta _{\tilde{q}_{j}}%
\end{array}%
\right. \mbox{in }\mathbb{R}^{2}.
\end{equation*}
The function  $\tilde{V}(\tilde{y}):=V(\tilde{y}+y_0)$  satisfies $
\Delta \tilde{V}+f(\tilde{V}(\tilde{y}))=4\pi\sum_{j=1}^{N_1}\delta_{\tilde{p}_j} $  and $
\int_{\mathbb{R}^2}f(\tilde{V}(\tilde{y}))d\tilde{y}= 4\pi\gamma_1.$
Let $\overrightarrow{\tilde{c}}:=\frac{1}{2\pi}\int_{\mathbb{R}^2}
\tilde{y}e^{\tilde{V}(\tilde{y})}(1-2e^{\tilde{V}(\tilde{y})})d\tilde{y}$.
Then $\overrightarrow{\tilde{c}} =\frac{1}{4\pi}\int_{\mathbb{R}^2}
(y-y_0)f(V(y))dy=\overrightarrow{c}-\frac{y_0}{4\pi}\int_{\mathbb{R}^2}f(V)dy.$
From the maximum principle,   $f(V(y))\ge0$ in $\mathbb{R}^2$, and  $2N_2+\frac{1}{4\pi}\int_{\mathbb{R}^2}f(V )dy>0.$
By choosing    $y_0=\frac{2\sum^{N_2}_{i=1} q_i +\overrightarrow{c}}{2N_2+\frac{1}{4\pi}\int_{\mathbb{R}^2}f(V )dy}$,  we get
$
2\sum^{N_2}_{i=1}\tilde{q_i}+\overrightarrow{\tilde{c}}=2\sum^{N_2}_{i=1} q_i +\overrightarrow{c}-y_0 (2N_2+\frac{1}{4\pi}\int_{\mathbb{R}^2}f(V )dy )=0
,$  which implies  \eqref{sumqeqc} with the   coordinate $\tilde{y}$.

Now we    show that $u_{2,\varepsilon}^{*}(x)-{\gamma_1} \ln|x|$ (resp. $%
u_{2,\varepsilon}^{*}$) on $[B_{\varepsilon R_{0}}(0)]^c$ (resp. $%
B_{\varepsilon R_{0}}(0)$) is enough small.

\begin{lemma}
\label{diffgreen} There are constants $c_1, c_2>0$, independent of $\e>0$, satisfying

{(i)} If $|x|\geq \varepsilon R_{0}$, then $u_{2,\varepsilon}^{*}(x)-%
{\gamma_1} \ln|x|=-\frac{x\cdot \overrightarrow{c}}{|x|^{2}}\varepsilon-d(x)%
\varepsilon^{2}+O\Big(\frac{\varepsilon^3}{|x|^3}(1+|\ln|x||)\Big)$ and
  $u_{2,\varepsilon}^*(x)-{\gamma_1}\ln|x|\le c_1$. Here $
d(x)=\frac{1}{8\pi}\int_{\mathbb{R}^2}\Big(\frac{2(x\cdot y)^2}{|x|^4}-%
\frac{|y|^2}{|x|^2}\Big)f(V(y))dy$ (see \eqref{definitionofdx}).

{(ii)} If $|x|\leq \varepsilon R_{0}$, then $u_{2,\varepsilon}^{*}(x)\leq
{\gamma_1} \ln \varepsilon+c_{2}$.
\end{lemma}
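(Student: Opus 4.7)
\textbf{Proof plan for Lemma \ref{diffgreen}.}

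The strategy is to treat the two cases of the definition \eqref{definitionofu2e} separately, each time extracting from the logarithmic potential its leading $\gamma_1\ln|x|$ together with the two $\varepsilon$-corrections predicted by the formula.

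For $N_1\ge 2$, I would start from the integral expression in \eqref{definitionofu2e}, change variables $y=\varepsilon z$ to obtain
\[
u_{2,\varepsilon}^{\ast}(x)=\frac{1}{4\pi}\int_{\mathbb R^2}\ln|x-\varepsilon z|\,f(V(z))\,dz,
\]
and Taylor expand $\ln|x-\varepsilon z|$ around $\ln|x|$ up to order two in $\varepsilon|z|/|x|$. Integrating term by term and using the three identities $\int f(V)=4\pi\gamma_1$, $\overrightarrow{c}=\frac{1}{4\pi}\int z\,f(V)\,dz$, and the definition of $d(x)$ in \eqref{definitionofdx}, the computation reproduces exactly the three claimed terms $\gamma_1\ln|x|$, $-\varepsilon\,x\cdot\overrightarrow{c}/|x|^2$, and $-\varepsilon^2 d(x)$. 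Convergence of the moment integrals $\int|z|^k f(V)\,dz$ needed for the Taylor terms is immediate from the tail behaviour $f(V)(z)=O(|z|^{-2\gamma_1+2N_1})$ and $\gamma_1>2N_1+2N_2+4$.

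For $N_1=1$ the plan is different: the explicit formula in \eqref{definitionofu2e} together with the tail expansion $V(y)=(2-2\gamma_1)\ln|y-p_1|+I_{p_1}+O(|y-p_1|^{-\sigma})$ for the radial symmetric $V$ gives
\[
u_{2,\varepsilon}^{\ast}(x)=\gamma_1\ln|x-\varepsilon p_1|+O\bigl((\varepsilon/|x|)^{\sigma}\bigr),
\]
and expanding $\gamma_1(\ln|x-\varepsilon p_1|-\ln|x|)$ in $\varepsilon$ produces the three-term form once one identifies $\overrightarrow{c}=\gamma_1 p_1$ and the corresponding explicit expression for $d(x)$ via radial symmetry of $f(V)$ about $p_1$ (the odd moments vanish). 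The upper bound $u_{2,\varepsilon}^{\ast}-\gamma_1\ln|x|\le c_1$ then follows from either expansion, since $\varepsilon/|x|\le 1/R_0$ throughout $\{|x|\ge\varepsilon R_0\}$. For part (ii), when $N_1\ge 2$ I would factor $\ln|x-\varepsilon z|=\ln\varepsilon+\ln|x/\varepsilon-z|$ inside the integral and bound the second piece above by $\ln(R_0+|z|)$, which is $f(V)$-integrable; when $N_1=1$, boundedness of $V(y)-2\ln|y-p_1|$ on $\{|y|\le R_0\}$ is enough, and this holds because subtracting $2\ln|y-p_1|$ removes precisely the $4\pi\delta_{p_1}$-singularity of $V$.

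The main technical obstacle is the tail region $\{|z|>|x|/(2\varepsilon)\}$ in (i) for the $N_1\ge 2$ case, where the Taylor expansion is no longer valid and $\ln|x-\varepsilon z|$ can be arbitrarily negative near the set $\{\varepsilon z=x\}$. A careful logarithmic estimate localized near that set, combined with the higher-moment integrability $\int|z|^6|f(V)|<\infty$ that the footnote to \eqref{definitionofu2e} secures, is what will produce the $(1+|\ln|x||)$ factor in the stated error $O(\varepsilon^3|x|^{-3}(1+|\ln|x||))$.
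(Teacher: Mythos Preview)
Your plan matches the paper's proof almost exactly: for $N_1\ge 2$ the paper also writes
\[
u_{2,\varepsilon}^{\ast}(x)-\gamma_1\ln|x|=\frac{1}{8\pi}\int_{\mathbb R^2}\bigl(\ln|x-\varepsilon y|^2-\ln|x|^2\bigr)f(V(y))\,dy,
\]
Taylor-expands to second order, and controls the remainder by splitting off the region $\{|x-\varepsilon y|\le|x|/2\}$ exactly as you anticipate; for $N_1=1$ it uses the explicit formula and the tail expansion of $V$, then checks $\overrightarrow{c}=\gamma_1 p_1$ and the corresponding $d(x)$ by radial symmetry. Your part~(ii) argument is also the paper's.

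There is one point where your plan has a genuine gap. You write that the uniform upper bound $u_{2,\varepsilon}^{\ast}(x)-\gamma_1\ln|x|\le c_1$ ``follows from either expansion, since $\varepsilon/|x|\le 1/R_0$''. It does not: the remainder in the expansion is $O\bigl((\varepsilon/|x|)^3(1+|\ln|x||)\bigr)$, and at the inner boundary $|x|=\varepsilon R_0$ this is of order $|\ln\varepsilon|$, not $O(1)$. The paper proves the upper bound by a separate one-line argument exploiting the sign $f(V)\ge 0$ (a consequence of the maximum principle for \eqref{cs1}):
\[
u_{2,\varepsilon}^{\ast}(x)-\gamma_1\ln|x|\le\frac{1}{4\pi}\int_{\mathbb R^2}\bigl(\ln(|x|+\varepsilon|y|)-\ln|x|\bigr)f(V(y))\,dy\le\frac{\varepsilon}{4\pi|x|}\int_{\mathbb R^2}|y|\,f(V(y))\,dy\le\frac{1}{4\pi R_0}\int|y|\,f(V)\,dy.
\]
You should insert this (or an equivalent direct bound) rather than appeal to the asymptotic expansion.
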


\begin{proof}
(i)  First,  let $N_1\ge 2$.   By Taylor's expansion theorem, we get that
\begin{equation}  \label{tayloroflog}
\begin{aligned} \ln|x-\varepsilon y|^2=\ln|x|^2-\frac{2x\cdot
y}{|x|^2}\varepsilon+\Big(\frac{|y|^2}{|x|^2}-\frac{2(x\cdot
y)^2}{|x|^4}\Big)\varepsilon^2+R(\varepsilon,x,y), \end{aligned}
\end{equation}
where $R(\varepsilon,x,y)=O\Big(\frac{|y|^3\varepsilon^3}{|x|^3}+\frac{%
|y|^4\varepsilon^4}{|x|^4}\Big)+\frac{(-2x\cdot
y\varepsilon+\varepsilon^2|y|^2)^3}{3(\theta_{x,y,\varepsilon}|x-\varepsilon
y|^2+(1-\theta_{x,y,\varepsilon})|x|^2)^3}$ for some $\theta_{x,y,%
\varepsilon}\in(0,1)$. By using \eqref{tayloroflog}, we get that
\begin{equation*}
\begin{aligned} &u_{2,\varepsilon}^*(x)-{\gamma_1} \ln|x|
=\frac{1}{8\pi}\int_{\mathbb{R}^2}(\ln|x-\varepsilon y|^2-\ln
|x|^2)f(V(y))dy \\&=\frac{1}{8\pi}\int_{\mathbb{R}^2}\Big\{-\frac{2x\cdot
y}{|x|^2}\varepsilon+\Big(\frac{|y|^2}{|x|^2}-\frac{2(x\cdot
y)^2}{|x|^4}\Big)\varepsilon^2 +R(\varepsilon,x,y)\Big\}f(V(y))dy
\\&=-\frac{x\cdot \overrightarrow{c}}{|x|^2}\varepsilon-d(x)\varepsilon
^2+\frac{1}{8\pi}\int_{\mathbb{R}^2}R(\varepsilon,x,y)f(V(y))dy.
\end{aligned}
\end{equation*}
By  $|f(V(y))|=(1+|y|)^{-2\gamma_1+2N_1}$,  $\gamma_1>2N_1+2$, and $N_1\ge2$, we get that  $\int_{\mathbb{R}^2}|y|^{6}|f(V(y))|dy$ is finite.
Moreover, we obtain  that for some constant $c>0$, independent of $\e>0$,
\begin{equation}
\begin{aligned}\label{greendiff0}
&\Big|\int_{\mathbb{R}^2}R(\varepsilon,x,y)f(V(y))dy\Big|
\le O \Big(  \Big|\int_{|x-\varepsilon y|\le
\frac{|x|}{2}}R(\varepsilon,x,y)f(V )dy\Big|
+ \frac{\varepsilon^3}{|x|^3}+\frac{\varepsilon^4}{|x|^4}+\frac{%
\varepsilon^6}{|x|^6}\Big). \end{aligned}
\end{equation}
By \eqref{tayloroflog} and the  change of variables, we also see that
\begin{equation*}
\begin{aligned} &\int_{|x-\varepsilon y|\le
\frac{|x|}{2}}R(\varepsilon,x,y)f(V )dy
%\\&=\int_{|x-\varepsilon y|\le \frac{|x|}{2}}\Big\{ \ln\frac{|x-\varepsilon y|^2}{ |x|^2}+\frac{2x\cdot y}{|x|^2}\varepsilon-\Big(\frac{|y|^2}{|x|^2}-\frac{2(x\cdot y)^2}{|x|^4}\Big)\varepsilon^2\Big\}f(V(y))dy
=\int_{|x-y|\le
\frac{|x|}{2}}\Big\{ \ln\frac{|x- y|^2}{|x|^2}+\frac{2x\cdot
y-|y|^2}{|x|^2}+\frac{2(x\cdot y)^2}{|x|^4}\Big\}
\frac{f(V(\frac{y}{\varepsilon}))}{\varepsilon^2}dy \end{aligned}
\end{equation*}
Since $|x-y|\le
\frac{|x|}{2} \Rightarrow\frac{|x|}{2}\le|y|\le\frac{3|x|}{2}$, we see that if  $\varepsilon R_0 \le|x|$, then  $\frac{\varepsilon R_0}{2} \le\frac{|x|}{2}\le|y|\le\frac{3|x|}{2}$ and
\begin{equation*}
\begin{aligned} &\Big|\int_{|x-\varepsilon y|\le
\frac{|x|}{2}}R(\varepsilon,x,y)f(V )dy\Big| \le\Big|\int_{|x-y|\le \frac{|x|}{2}} \Big(\ln\frac{|x-
y|^2}{|x|^2}\Big) \frac {f(V(\frac{y}{\varepsilon}))}{\varepsilon^2}dy\Big|
\\& +\Big|\int_{|x-y|\le \frac{|x|}{2}}\Big(\frac{2x\cdot
y-|y|^2}{|x|^2}+\frac{2(x\cdot y)^2}{|x|^4}\Big)
\frac{f(V(\frac{y}{\varepsilon} ))}{\varepsilon^2}dy\Big|  \le   O\Big( \Big(\frac{\varepsilon}{|x|}\Big)^{2{\gamma_1}-2N_1-2}(1+|\ln|x||)\Big).
\end{aligned}
\end{equation*}
If $ \varepsilon R_0\le |x|$, then there are   constants  $c_2>0$ and $\theta_{x,y,\varepsilon}\in(0,1)$ such that
\begin{equation}\label{greendiff3}
\begin{aligned}
&u_{2,\varepsilon}^*(x)-{\gamma_1}\ln|x|\le\frac{1}{4\pi}\int_{\mathbb{R}^2}(\ln(|x|+\varepsilon |y|)-\ln|x|)f(V(y))dy
\\&\le \frac{1}{4\pi}\int_{\mathbb{R}^2}\frac{\varepsilon|y||f(V(y))|}{|x|+\theta_{x,y,\varepsilon}(\varepsilon|y|)}dy
\le\frac{\varepsilon}{4\pi|x|}\int_{\mathbb{R}^2}|y||f(V(y))|dy
\le c_2.
\end{aligned}
\end{equation}
By using \eqref{greendiff0}-\eqref{greendiff3}, we prove Lemma \ref%
{diffgreen}-$(i)$ for $N_1\ge2$.

Now we consider the case $N_1=1$.
We remind that
\begin{equation}\label{vatinfty}
V(x)=(-2\gamma_1+2)\ln|x -p_1|+ I_{p_1} + O(|x -p_1|^{-\sigma})\ \textrm{for}\ |x |\gg1.
\end{equation}
Since $N_1=1$ and $V$ is a radial function with respect to $p_1$, from \cite[Lemma 2.6]{CFL}, we have that $\sigma=2\gamma_1-4$.
By using $u_{2,\varepsilon}^*(x)= -\frac{V ( \frac{x}{\varepsilon} )-2\ln|\frac{x}{\varepsilon}-p_1|}{2}+
\gamma_1\ln\varepsilon+\frac{I_{p_1}}{2}$ and \eqref{vatinfty}, we see that  if $|x|\ge \e R_0$, then
from $\gamma_1>2N_1+2=4$ and \eqref{tayloroflog},
\begin{equation*}\begin{aligned}
u_{2,\varepsilon}^*(x)- \gamma_1\ln |x| &=\frac{\gamma_1}{2}(\ln | x-\e p_1|^2- \ln |x|^2)+ O\Big(\Big|\frac{x-\e p_1}{\e}\Big|^{4-2\gamma_1}\Big)\\&=-\frac{(x\cdot
p_1)\gamma_1}{|x|^2}\varepsilon+\frac{\gamma_1}{2}\Big(\frac{|p_1|^2}{|x|^2}-\frac{2(x\cdot
p_1)^2}{|x|^4}\Big)\varepsilon^2+O\Big(\frac{\e^3}{|x|^3 }\Big). \end{aligned}
\end{equation*}
Since $V$ is a radial function with respect to $p_1$, for   $y=(y_1,y_2)$, and $p_1=(p_{1,1},p_{1,2})$, we see that $
\overrightarrow{c}=\frac{1}{4\pi}\int_{\mathbb{R}^2}
(y +p_1)f(V(y+p_1))dy=p_1\gamma_1,$ and
\begin{equation*}\begin{aligned}
& d(x) =\frac{1}{8\pi}\int_{\mathbb{R}^2}\Big(\frac{(x_1^2-x_2^2)(y_1^2-y_2^2)+4x_1x_2y_1y_2+4(x_1y_1+x_2y_2)(x_1p_{1,1}+x_2p_{1,2})}{|x|^4}
\\&+\frac{2(x\cdot p_1)^2-|x|^2|p_1|^2-2|x|^2(y_1p_{1,1}+y_2p_{1,2}) }{|x|^4}\Big)f(V(y+p_1))dy
=\frac{\gamma_1}{2}\Big(\frac{2(x\cdot p_1)^2}{|x|^4}-\frac{|p_1|^2}{|x|^2}\Big).
\end{aligned}
\end{equation*}
Thus when $N_1=1$, if $|x|\ge\e R_0$, then for some constant $c_2$, independent of $\e>0$,
\[u_{2,\varepsilon}^{*}(x)-%
{\gamma_1} \ln|x|=-\frac{x\cdot \overrightarrow{c}}{|x|^{2}}\varepsilon-d(x)%
\varepsilon^{2}+O\Big(\frac{\varepsilon^3}{|x|^3}\Big)\le c_2.\]

(ii) If $|x|\le \varepsilon R_0$ and $N_1\ge2$,   there is a constant $c_1>0$, independent of $\e>0$,
satisfying
\begin{equation*}
\begin{aligned} u_{2,\varepsilon}^*(x)&\le
\frac{1}{4\pi}\int_{\mathbb{R}^2}\ln(|x|+\varepsilon |y|)f(V(y))dy  \le
\frac{1}{4\pi}\int_{\mathbb{R}^2}\ln[\varepsilon(R_0+|y|)]f(V(y))dy\le
{\gamma_1} \ln \varepsilon+c_1. \end{aligned}
\end{equation*}
If $|x|\le \e R_0$ and $N_1=1$, then there is a constant $c_1>0$, independent of $\e>0$, such that  $u_{2,\varepsilon}^*(x)= -\frac{V ( \frac{x}{\varepsilon} )-2\ln|\frac{x}{\varepsilon}-p_1|}{2}+
\gamma_1\ln\varepsilon+\frac{I_{p_1}}{2}\le \gamma_1\ln\varepsilon+c_1$. Here we used that $V (x )-2\ln|x-p_1|$ is smooth and $\Delta( V (x )-2\ln|x-p_1|)+f(V(x))=0$  in $\RN$. This completes the proof.
\end{proof}
Here we get some results for $\tilde{W}_a(\varepsilon x)$.
\begin{lemma}
\label{mean} (i) $|\tilde{W}_a(0)-\tilde{W}_a(\varepsilon x)|\le O( |\varepsilon x|^{2N_2+{\gamma_1}+2}+|a||\varepsilon
x|^{N_2+\frac{{\gamma_1}}{2}+1}). $

(ii) $ e^{\tilde{W}_a(x)+2\sum^{N_2}_{i=1}\ln|x-\varepsilon
q_i|+u_{2,\varepsilon}^*(x) }=\left\{
\begin{array}{ll}O(\varepsilon^{2N_2+{\gamma_1}})\quad \mbox{if}\
|x|\le\varepsilon R_0,\\   e^{W_a(x)}\Big\{
1+\frac{\varepsilon^2 A(x)}{|x|^2}+O ( \frac{\varepsilon^3}{|x|^3}
  (1+|\ln|x||)^3)\Big\}\ \mbox{if}\  |x|\ge\varepsilon R_0.
\end{array}\right. $
\end{lemma}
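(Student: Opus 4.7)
The plan is to prove the two assertions by direct computation, based on the explicit formulas for $\tilde{W}_a$ and $W_a$ together with the asymptotic expansion of Lemma \ref{diffgreen} and the normalization \eqref{sumqeqc}.

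For (i), I would use the definition \eqref{regularpart} to observe that the singular part $(\gamma_1+2N_2)\ln|x|$ of $W_a$ cancels, leaving
\[
\tilde{W}_a(0) - \tilde{W}_a(\varepsilon x) = 2\ln\frac{1+|(\varepsilon x)^{1+\frac{\gamma_1}{2}+N_2}+a|^2}{1+|a|^2}.
\]
Writing $z=(\varepsilon x)^{1+\frac{\gamma_1}{2}+N_2}$, one has $|z+a|^2-|a|^2 = |z|^2 + 2\,\mathrm{Re}(z\bar a)$, so the log argument equals $1 + O(|z|^2 + |a||z|)$. Applying $|\ln(1+t)|\le 2|t|$ for small $|t|$ produces the bound $O(|\varepsilon x|^{2N_2+\gamma_1+2} + |a||\varepsilon x|^{N_2+\gamma_1/2+1})$.

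For (ii), I split into two regions. When $|x|\le\varepsilon R_0$, part (i) (with $a\to0$) shows $\tilde{W}_a(x)=O(1)$, while $\prod_{i=1}^{N_2}|x-\varepsilon q_i|^2 \le (2\varepsilon R_0)^{2N_2}$, and Lemma \ref{diffgreen}(ii) gives $u_{2,\varepsilon}^*(x)\le \gamma_1\ln\varepsilon+c_2$. Multiplying yields $O(\varepsilon^{2N_2+\gamma_1})$. When $|x|\ge\varepsilon R_0$, I would subtract $W_a(x) = \tilde{W}_a(x)+(\gamma_1+2N_2)\ln|x|$ and define the exponent
\[
\mathcal{E}(x,\varepsilon) := \sum_{i=1}^{N_2}\ln\frac{|x-\varepsilon q_i|^2}{|x|^2} + \bigl(u_{2,\varepsilon}^*(x)-\gamma_1\ln|x|\bigr).
\]
A Taylor expansion of $\ln(1+u)$ applied to $u = -\frac{2\varepsilon(x\cdot q_i)}{|x|^2}+\frac{\varepsilon^2|q_i|^2}{|x|^2}$ gives
\[
\ln\frac{|x-\varepsilon q_i|^2}{|x|^2} = -\frac{2\varepsilon(x\cdot q_i)}{|x|^2} + \frac{\varepsilon^2|q_i|^2}{|x|^2} - \frac{2\varepsilon^2(x\cdot q_i)^2}{|x|^4} + O\!\left(\tfrac{\varepsilon^3}{|x|^3}\right),
\]
while Lemma \ref{diffgreen}(i) supplies $u_{2,\varepsilon}^*(x)-\gamma_1\ln|x| = -\frac{x\cdot\vec c}{|x|^2}\varepsilon - d(x)\varepsilon^2 + O(\frac{\varepsilon^3}{|x|^3}(1+|\ln|x||))$. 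Summing, the $\varepsilon^1$-coefficient is $-\frac{x\cdot(2\sum_i q_i+\vec c)}{|x|^2}$, which vanishes identically by \eqref{sumqeqc}, and the $\varepsilon^2$-coefficient reassembles into exactly $\frac{A(x)}{|x|^2}$ from the definition \eqref{definitionofdx}.

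Finally I would exponentiate: since $|x|\ge\varepsilon R_0$ implies $\varepsilon/|x|\le 1/R_0$, one has $\mathcal{E}(x,\varepsilon) = \frac{\varepsilon^2 A(x)}{|x|^2} + O(\frac{\varepsilon^3}{|x|^3}(1+|\ln|x||))$ with small argument, so the expansion $e^{\mathcal{E}} = 1 + \mathcal{E} + O(\mathcal{E}^2)$ absorbs the quadratic term $(\varepsilon^2 A(x)/|x|^2)^2 = O(\varepsilon^4/|x|^4) = O(\varepsilon^3/|x|^3)$ into the remainder, giving the stated factor $\{1+\frac{\varepsilon^2 A(x)}{|x|^2}+O(\frac{\varepsilon^3}{|x|^3}(1+|\ln|x||)^3)\}$ times $e^{W_a(x)}$. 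The only delicate point — and the step where care is required — is verifying the exact cancellation at order $\varepsilon$ and the exact identification of the $\varepsilon^2$-coefficient with $A(x)/|x|^2$; both are arithmetic consequences of \eqref{sumqeqc} and the definition of $d(x)$, while the remaining estimates are routine Taylor bounds.
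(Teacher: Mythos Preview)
Your proposal is correct and follows essentially the same strategy as the paper. For (i), the paper uses the mean value theorem applied to $\ln$ (which gives the bound for all values of the argument, not just small ones), whereas you invoke $|\ln(1+t)|\le 2|t|$ for small $t$; this is a harmless sloppiness since for large $|z|$ the left side grows only logarithmically while the claimed bound is polynomial. For (ii) with $|x|\ge\varepsilon R_0$, the paper expands the product $\prod_i|x-\varepsilon q_i|^2/|x|^{2N_2}$ directly (equation \eqref{taylorexpansion}) and separately Taylor-expands $e^{u_{2,\varepsilon}^*-\gamma_1\ln|x|}$ to second order, then multiplies the two factors; you instead Taylor-expand each $\ln(|x-\varepsilon q_i|^2/|x|^2)$, add everything into a single exponent $\mathcal{E}$, and exponentiate once at the end. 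The two organizations are equivalent and rely on the same ingredients (Lemma~\ref{diffgreen}(i), the normalization \eqref{sumqeqc}, and the definition of $A(x)$), with your version arguably slightly more streamlined.
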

\begin{proof}
(i)  By  the mean value theorem, we have   for $\theta_{x}\in(0,1)$,
\begin{equation*}
\begin{aligned}&\Big|\frac{\tilde{W}_a(0)-\tilde{W}_a(\varepsilon x)}{2}\Big|
 =\Big|\ln \Big(\frac{1+|(\varepsilon
x)^{N_2+\frac{{\gamma_1}}{2}+1}+a|^2}{1+|a|^2}\Big)\Big|
\\&=\Big|\frac{|(\varepsilon
x)^{N_2+\frac{{\gamma_1}}{2}+1}+a|^2-|a|^2}{1+\theta_x|(\varepsilon
x)^{N_2+\frac{{\gamma_1}}{2}+1}+a|^2+(1-\theta_x)|a|^2}\Big|  \le O(
|\varepsilon x|^{2N_2+{\gamma_1}+2}+|a||\varepsilon x|^{N_2+\frac{{\gamma_1}}{2}+1}).
\end{aligned}
\end{equation*}
 (ii) By Lemma \ref{diffgreen}, we get     $e^{\tilde{W}%
_a(x)+2\sum^{N_2}_{i=1}\ln|x-\varepsilon
q_i|+u_{2,\varepsilon}^*(x)}=O(\varepsilon^{2N_2+{\gamma_1}})$ if $|x|\le
\varepsilon R_{0}$.

  By   Taylor expansion of $\prod^{N_{2}}_{i=1}|x-\varepsilon
q_{i}|^{2}$ with respect to $\varepsilon$ and \eqref{sumqeqc},
\begin{equation}
\begin{aligned}\label{taylorexpansion} &
\frac{\prod_{i=1}^{N_{2}}|x-\varepsilon q_{i}|^{2}}{|x|^{2N_{2}}}
=\Big\{1-\frac{2\sum^{N_2}_{i=1}q_i\cdot
x}{|x|^{2}}\varepsilon\\&+\Big(\sum_{i=1}^{N_{2}}|q_{i}|^{2}+\frac{(2\sum^{N_2}_{i=1}q_i
\cdot x)^{2}}{2|x|^{2}}-2\sum_{i=1}^{N_{2}}\frac{(x\cdot
q_{i})^{2}}{|x|^{2}}\Big)\frac{\varepsilon^{2}}{|x|^{2}}
+
\sum_{k=3}^{2N_{2}}O\Big(\frac{\varepsilon
^{k}}{|x|^{k}}\Big)\Big\}
\\&=\Big\{1+\frac{\overrightarrow{c}\cdot
x}{|x|^{2}}\varepsilon+\Big(\sum_{i=1}^{N_{2}}|q_{i}|^{2}+\frac{(%
\overrightarrow{c}\cdot x)^{2}}{2|x|^{2}}-2\sum_{i=1}^{N_{2}}\frac{(x\cdot
q_{i})^{2}}{|x|^{2}}\Big)\frac{\varepsilon^{2}}{|x|^{2}}+%
\sum_{k=3}^{2N_{2}}O\Big(\frac{\varepsilon
^{k}}{|x|^{k}}\Big)\Big\}.\end{aligned}
\end{equation}
By using \eqref{definitionofdx}, \eqref{taylorexpansion},  and Lemma \ref{diffgreen}, we also see that
if $  \varepsilon R_{0}\le |x|$, then
\begin{equation*}
\begin{aligned}&e^{\tilde{W}_a(x)+2\sum^{N_2}_{i=1}\ln|x-\varepsilon
q_i|+u_{2,\varepsilon}^*(x)}
\\&=\frac{e^{W_a(x)}\prod^{N_2}_{i=1}|x-\varepsilon q_i|^2}{|x|^{2N_2}}
\Big\{\sum_{k=0}^2\frac{(u_{2,\varepsilon}^*(x)-{\gamma_1}\ln|x|)^k}{k!}+O(|u_{2,\varepsilon}^*(x)-{\gamma_1}\ln|x||^3)
\Big\}\\&=e^{W_a(x)}\Big\{1+\frac{\overrightarrow{c}\cdot
x}{|x|^2}\varepsilon+\Big(\sum^{N_2}_{i=1}|q_i|^2
+\frac{(\overrightarrow{c}\cdot x)^2}{2|x|^2}-2\sum^{N_2}_{i=1}\frac{(x\cdot
q_i)^2}{|x|^2}\Big)\frac{\varepsilon^2}{|x|^2}+O\Big(\frac{%
\varepsilon^3}{|x|^3}\Big)\Big\} \\&\times
\Big\{1-\frac{\overrightarrow{c}\cdot
x}{|x|^2}\varepsilon-d(x)\varepsilon^2+\frac{(\overrightarrow{c}\cdot
x)^2}{2|x|^4}\varepsilon^2+O\Big(\frac{\varepsilon^3}{|x|^3}(1+|\ln|x||)^3%
\Big)\Big\},
%\\&=e^{W_a(x)}\Big\{1+A(x)\frac{\varepsilon^2}{|x|^2}+O\Big(\frac{\varepsilon^3}{|x|^3}(1+|%\ln|x||)\Big)\Big\}.
\end{aligned}
\end{equation*}
which implies Lemma \ref{mean}-(ii).
\end{proof}

Now we recall the following  result.
\begin{lemma}
\label{w}\cite {CI,CFL}
(i) For any $w\in X^1_\alpha\cup X^2_\alpha$, there exists a constant $%
c_w\in\mathbb{R}$ such that
\begin{equation*}
w(x)=c_w+\frac{1}{2\pi}\int_{\mathbb{R}^2}\ln|x-y|\Delta w(y)dy.
\end{equation*}

(ii) There exists a constant $C>0$, independent of $w\in X^i_{\alpha}$,
satisfying
\begin{equation*}
|w(x)|\le C\| w \|_{X^{i}_{\alpha}}\ln(2+|x|)\ \ \mbox{ for all}\ \ x\in\mathbb{R}^2,\ w
\in X^{i}_{\alpha}, i=1,2.
\end{equation*}
 \end{lemma}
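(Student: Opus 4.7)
The plan is to prove Lemma \ref{w} in three stages: first construct the Newtonian logarithmic potential $\tilde w$ of $\Delta w$; then derive a pointwise logarithmic bound on $\tilde w$; and finally use a weighted Liouville-type argument to show that $w-\tilde w$ is a constant, from which both (i) and (ii) fall out.

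The first step is to set
\[
\tilde w(x):=\frac{1}{2\pi}\int_{\mathbb{R}^2}\ln|x-y|\,\Delta w(y)\,dy.
\]
Since $w\in X_\alpha^i$ gives $\Delta w\in Y_\alpha$, one application of Cauchy--Schwarz against the weight $(1+|y|)^{-1-\alpha/2}$, whose $L^2(\mathbb{R}^2)$ norm is finite when $\alpha>0$, yields $\Delta w\in L^1(\mathbb{R}^2)$, and $\Delta w$ is automatically in $L^2_{\mathrm{loc}}$. Splitting the defining integral into the regions $\{|x-y|<1\}$ and $\{|x-y|\ge 1\}$, using the pointwise inequality $\ln|x-y|\le \ln(2+|x|)+\ln(1+|y|)$ on the outer piece together with $\int(\ln(1+|y|))^2(1+|y|)^{-2-\alpha}\,dy<\infty$, and applying a Cauchy--Schwarz bound controlled by $\|\Delta w\|_{Y_\alpha}$ on the inner piece, I will obtain
\[
|\tilde w(x)|\le C\,\|\Delta w\|_{Y_\alpha}\,\ln(2+|x|)\qquad\text{for all }x\in\mathbb{R}^2.
\]
The standard distributional identity $\Delta\tilde w=\Delta w$ then implies that $u:=w-\tilde w$ is harmonic on all of $\mathbb{R}^2$.

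The decisive step is showing that $u$ is a constant. As an entire harmonic function in the plane, $u$ admits the convergent expansion $u(r,\theta)=c_0+\sum_{k\ge1}r^k(a_k\cos k\theta+b_k\sin k\theta)$. From $\|w\rho_i\|_{L^2}\le\|w\|_{X_\alpha^i}$ and the logarithmic bound just established (combined with $\int\rho_i(x)^2\ln^2(2+|x|)\,dx<\infty$, which follows from direct calculation for both $i=1,2$), I conclude $u\rho_i\in L^2(\mathbb{R}^2)$. Expressing $\|u\rho_i\|_{L^2}^2$ in polar coordinates and using angular orthogonality of the Fourier basis reduces finiteness to
\[
c_0^2\int_0^\infty r\,\rho_i(r)^2\,dr<\infty\quad\text{and}\quad (a_k^2+b_k^2)\int_0^\infty r^{2k+1}\rho_i(r)^2\,dr<\infty\ \ (k\ge1).
\]
Direct computation shows the $k\ge1$ integrals diverge at infinity for both weights: for $\rho_2^2=(1+r)^{-2-\alpha}$ the tail is $r^{2k-1-\alpha}$ with exponent $\ge 1-\alpha>-1$ since $\alpha<1/3$, while for $\rho_1^2=(1+r)^{-2}(\ln(2+r))^{-2-\alpha}$ the tail $r^{2k-1}(\ln r)^{-2-\alpha}$ is not integrable because no logarithmic factor can tame a positive polynomial. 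Therefore $a_k=b_k=0$ for every $k\ge1$ and $u\equiv c_0=:c_w$, which is part (i).

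Part (ii) then follows at once from $w=c_w+\tilde w$. The triangle inequality yields
\[
|c_w|\,\|\rho_i\|_{L^2}\le\|w\rho_i\|_{L^2}+\|\tilde w\rho_i\|_{L^2}\le C\,\|w\|_{X_\alpha^i},
\]
and since $\|\rho_i\|_{L^2}$ is a finite positive constant, $|c_w|\le C\|w\|_{X_\alpha^i}$. Thus $|w(x)|\le|c_w|+|\tilde w(x)|\le C\|w\|_{X_\alpha^i}\ln(2+|x|)$. The main obstacle I expect is the careful exponent bookkeeping in the Liouville step, especially the borderline case of the weight $\rho_1$, whose logarithmic decay is only just strong enough to admit the constant mode while being too weak to accommodate any higher harmonic; checking that the radial integrals really diverge for every $k\ge1$ in both weight classes is the crux of the argument.
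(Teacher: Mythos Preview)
The paper does not supply its own proof here; it simply cites \cite[Lemma 1.1]{CI} and \cite[Theorem 4.1]{CFL}. Your overall strategy---form the Newtonian potential $\tilde w$, bound it logarithmically, and run a Liouville argument on the harmonic remainder $u=w-\tilde w$---is the standard one and works cleanly for $X_\alpha^2$. However, there is a concrete error in your treatment of $X_\alpha^1$: you assert that $\int_{\RN}\rho_i(x)^2\ln^2(2+|x|)\,dx<\infty$ ``for both $i=1,2$'', but for $i=1$ this integral equals $\int_{\RN}(1+|x|)^{-2}(\ln(2+|x|))^{-\alpha}\,dx$, whose radial tail $\int^\infty r^{-1}(\ln r)^{-\alpha}\,dr$ diverges for every $\alpha\le 1$ (and here $0<\alpha<\tfrac13$). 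Consequently you cannot conclude $\tilde w\rho_1\in L^2$, so neither the step $u\rho_1\in L^2$ in the Liouville argument nor the bound on $|c_w|$ via $\|\tilde w\rho_1\|_{L^2}$ in part (ii) is justified. This is exactly the phenomenon the paper flags when it notes that logarithmically growing functions lie in $X_\alpha^2$ but not in $X_\alpha^1$.

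The repair is painless. The ratio $\rho_2/\rho_1=(1+|x|)^{-\alpha/2}(\ln(2+|x|))^{1+\alpha/2}$ is continuous on $[0,\infty)$, equals $(\ln 2)^{1+\alpha/2}$ at the origin, and tends to $0$ at infinity (polynomial decay beats logarithmic growth), hence is bounded; thus $\rho_2\le C\rho_1$ pointwise and $X_\alpha^1\hookrightarrow X_\alpha^2$ continuously. Run your argument only for $i=2$; both (i) and (ii) for $w\in X_\alpha^1$ then follow from $\|w\|_{X_\alpha^2}\le C\|w\|_{X_\alpha^1}$. Alternatively, bypass the $L^2$ route: the annular estimate $\int_{R\le|y|\le 3R}|w|\,dy\le CR\,\rho_i(3R)^{-1}\|w\rho_i\|_{L^2}$ combined with the mean value property for the harmonic function $u$ yields $|u(x)|=o(|x|)$ directly for either weight, whence $u$ is constant by the classical Liouville theorem.
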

\begin{proof}See the proof for    \cite[Lemma 1.1]{CI} and    \cite[Theorem 4.1]{CFL}.
\end{proof}
For any $h\in Y_{\alpha}$, we have the following estimation.
\begin{lemma}\label{estfory}
   There is a constant $c >0$ such that

(i)  $\left|\intr (\ln|x-y|-\ln|x|)h(y)dy\right|\le c  |x|^{-\frac{\alpha}{2}}(\ln|x|+1)\|h\|_{Y_\alpha}$  for all $x\in\RN\setminus B_2(0)$ and $h\in Y_{\alpha}$;

 (ii) $\left|\intr  \ln|x-y| h(y)dy\right|\le c  \|h\|_{Y_\alpha}$  for all $x\in   B_2(0)$ and $h\in Y_{\alpha}$.

\end{lemma}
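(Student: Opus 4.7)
\textbf{Proof proposal for Lemma \ref{estfory}.} The plan is to decompose the domain of integration into pieces on which $\ln|x-y|$ has controlled behavior, and then on each piece to apply the Cauchy--Schwarz inequality in concert with the weighted definition of $Y_\alpha$ so that the weight $(1+|y|)^{2+\alpha}$ absorbs both the logarithmic factors and the volume of the region of integration.

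For part (i), assume $|x|\ge 2$ and split $\RN=A_1\cup A_2\cup A_3\cup A_4$, where $A_1=\{|y|\le |x|/2\}$, $A_2=\{|x-y|\le 1\}$, $A_3=\{|x-y|\ge 1,\ |x|/2\le|y|\le 2|x|\}$, $A_4=\{|y|\ge 2|x|\}$. On $A_1$ the triangle inequality gives $|x-y|/|x|\in[1/2,3/2]$, so by the mean value theorem $|\ln|x-y|-\ln|x||\le C|y|/|x|$, and Cauchy--Schwarz with the identity $\int_{|y|\le|x|/2}|y|^2(1+|y|)^{-2-\alpha}\,dy\le C|x|^{2-\alpha}$ (by polar coordinates, using $\alpha<2$) yields a contribution $\le C|x|^{-\alpha/2}\|h\|_{Y_\alpha}$. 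On $A_3$ we have $1\le|x-y|\le 3|x|$, so $|\ln|x-y|-\ln|x||\le C(1+\ln|x|)$, and Cauchy--Schwarz together with $\int_{|y|\ge|x|/2}(1+|y|)^{-2-\alpha}\,dy\le C|x|^{-\alpha}$ yields a contribution $\le C|x|^{-\alpha/2}(1+\ln|x|)\|h\|_{Y_\alpha}$. On $A_4$ one has $|x-y|\asymp|y|$, hence $|\ln|x-y|-\ln|x||\le C+\ln(|y|/|x|)$, and after Cauchy--Schwarz the integral $\int_{|y|\ge 2|x|}(\ln(|y|/|x|))^2(1+|y|)^{-2-\alpha}\,dy$ is bounded by $C|x|^{-\alpha}$ (via the change of variables $r=|y|/|x|$ and convergence of $\int_2^\infty(\ln r)^2 r^{-1-\alpha}\,dr$, which uses $\alpha>0$), again giving a contribution $\le C|x|^{-\alpha/2}\|h\|_{Y_\alpha}$.

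The region $A_2$ requires slightly more care because $\ln|x-y|$ is singular there; the idea is to bound $\int_{A_2}\ln|x-y|\,h(y)\,dy$ and $\ln|x|\int_{A_2}h(y)\,dy$ separately. Since $|y|\ge|x|-1\ge|x|/2$ on $A_2$, the weight satisfies $(1+|y|)^{2+\alpha}\ge c|x|^{2+\alpha}$, so $\int_{A_2}h(y)^2\,dy\le C|x|^{-2-\alpha}\|h\|_{Y_\alpha}^2$. Combined with the elementary bound $\int_{A_2}(\ln|x-y|)^2\,dy\le C$ and $|A_2|\le C$, Cauchy--Schwarz on each piece produces contributions $\le C|x|^{-1-\alpha/2}(1+\ln|x|)\|h\|_{Y_\alpha}$, which is absorbed into the desired bound. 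Summing the contributions of $A_1,A_2,A_3,A_4$ proves (i).

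For part (ii), take $|x|\le 2$ and decompose $\RN=B_1\cup B_2$ with $B_1=\{|y|\le 4\}$ and $B_2=\{|y|\ge 4\}$. On $B_1$, since $\ln|x-y|$ is locally square-integrable and $x\mapsto\int_{|y|\le 4}(\ln|x-y|)^2\,dy$ is bounded uniformly for $x\in\overline{B_2(0)}$, Cauchy--Schwarz gives $|\int_{B_1}\ln|x-y|h(y)\,dy|\le C\|h\|_{Y_\alpha}$. On $B_2$ we have $|x-y|\in[|y|/2,3|y|/2]$, so $|\ln|x-y||\le\ln|y|+C$, and Cauchy--Schwarz against the weight yields $\|h\|_{Y_\alpha}(\int_{|y|\ge 4}(\ln|y|+C)^2(1+|y|)^{-2-\alpha}\,dy)^{1/2}\le C\|h\|_{Y_\alpha}$, using $\alpha>0$.

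The main technical subtlety is the treatment of $A_2$ in part (i): naively bounding the difference $\ln|x-y|-\ln|x|$ pointwise fails because of the logarithmic singularity at $y=x$. The resolution is to split the two logarithms and exploit the fact that the $Y_\alpha$-weight is large ($\sim|x|^{2+\alpha}$) precisely where $y$ is close to $x$, which together with the local $L^2$-integrability of $\ln|x-y|$ on the unit ball gives enough decay to close the estimate.
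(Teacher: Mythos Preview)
Your proof is correct and follows essentially the same approach as the paper's: decompose into the near region $|y|\le|x|/2$, the annulus $|x|/2\le|y|\le 2|x|$, and the far region $|y|\ge 2|x|$, then apply Cauchy--Schwarz (the paper says H\"older) against the $Y_\alpha$ weight on each piece. The only difference is that you further subdivide the annulus by carving out $A_2=\{|x-y|\le 1\}$ to treat the logarithmic singularity explicitly, whereas the paper simply bounds the annular contribution by $\|(|\ln|x-y||+|\ln|x||)|y|^{-1-\alpha/2}\|_{L^2(\{|x|/2\le|y|\le 2|x|\})}\|h\|_{Y_\alpha}$ and absorbs the singularity into the finite $L^2$ integral of $\ln|x-y|$ over the annulus; this is a presentational refinement rather than a different method.
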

\begin{proof}(i) If $|y|\le\frac{|x|}{2}$, then  $|x-y|\ge |x|-|y|\ge \frac{|x|}{2}$, which implies  for some $\theta\in(0,1)$,
\begin{equation}\label{smally}
\begin{aligned}|\ln|x-y|-\ln|x||=\left|\frac{|x-y|-|x|}{\theta|x-y|+(1-\theta)|x|}\right|\le \frac{2|y|}{|x|}. \end{aligned}\end{equation}
We also see that if  $|y|\ge 2|x|\ge4$, then  \begin{equation}
\begin{aligned}\label{largey}\frac{3|y|}{2}\ge  |x-y|\ge  |y|-|x|\ge \frac{|y|}{2}\ge2.\end{aligned}\end{equation}
By    H\"{o}lder's inequality and \eqref{smally}-\eqref{largey}, we see that if $|x|\ge2$, then  there are constants $c_1, c_2>0$, independent of $x\in\RN\setminus B_2(0)$ and $h\in Y_{\alpha}$, such that  \begin{equation*}
\begin{aligned}
&\left|\intr (\ln|x-y|-\ln|x|)h(y)dy\right|\\&\le   c_1\Big(\int_{|y|\le\frac{|x|}{2}}\frac{|y|^{-\frac{\alpha}{2}}|y|^{1+\frac{\alpha}{2}}|h(y)|}{|x|}dy +\int_{\frac{|x|}{2}\le |y|\le 2|x|} (|\ln|x-y||+|\ln|x||)|y|^{-1-\frac{\alpha}{2}}|y|^{1+\frac{\alpha}{2}}|h(y)|dy\\&+\int_{|y|\ge2|x|}(|\ln|y||+|\ln|x||)|y|^{-1-\frac{\alpha}{2}}|y|^{1+\frac{\alpha}{2}}|h(y)|dy \Big)
\le c_2  |x|^{-\frac{\alpha}{2}}(\ln|x|+1)\|h\|_{Y_\alpha}.\end{aligned}\end{equation*}

(ii) If $|x|\le 2$ and $|y|\ge 4$, then $|y|\ge 2|x|$, which implies \eqref{largey}. By  using  H\"{o}lder's inequality again,  we see that if $|x|\le2$, then \begin{equation*}
\begin{aligned}
&\intr  |\ln|x-y| h(y)| dy \le   \int_{|y|\le4}|\ln|x-y|  h(y)|dy + \int_{|y|\ge4}\frac{|\ln|x-y||}{|y|^{1+\frac{\alpha}{2}}}|y|^{1+\frac{\alpha}{2}}| h(y)|dy  \le c \|h\|_{Y_\alpha},\end{aligned}\end{equation*}where $c>0$ is a  constant, independent of $x\in  B_2(0)$ and $h\in Y_{\alpha}$.
\end{proof}
If $w\in X^1_\alpha\cup X^2_\alpha$, then we cannot use integration by parts directly due to the behavior of $w$ at infinity. By using an approximate arguments with a smooth cut-off function, we have the following  simple result, but crucial for our next arguments.
\begin{lemma}\label{integrationbyparts}
For any  $w\in X^1_\alpha\cup X^2_\alpha$, we have

(i)  $\int_{\mathbb{R}^2}(\Delta  w)  Z_{i}dx =-2\int_{\RN}e^{W_0}Z_{i} wdx$ \ for $i=1,2$, and

(ii) $\int_{\mathbb{R}^2}(\Delta  w)  Z_{0}dx =-\int_{\RN}\Delta wdx-2\int_{\RN}e^{W_0}Z_{0} wdx$
 \end{lemma}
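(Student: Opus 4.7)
The naive identity $\int (\Delta w) Z_i\, dx = \int w \Delta Z_i\, dx = -2\int w e^{W_0} Z_i\, dx$ fails precisely because a function $w \in X^1_\alpha \cup X^2_\alpha$ can grow logarithmically at infinity (Lemma \ref{w}(ii)) while $Z_0$ does not decay at all. My plan is to localize with a smooth cutoff, integrate by parts on a compact set where this is unambiguously valid, and then prove that the boundary-type errors vanish. Fix a smooth cutoff $\chi_R \in C^\infty_c(\mathbb{R}^2)$ with $\chi_R \equiv 1$ on $B_R(0)$, $\operatorname{supp}\chi_R \subset B_{2R}(0)$, $|\nabla \chi_R| \le C/R$, and $|\Delta \chi_R| \le C/R^2$. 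Since both $w$ and $\Delta w$ are in $L^2_{\mathrm{loc}}$ by the very definition of $X^i_\alpha$, elliptic regularity places $w$ in $W^{2,2}_{\mathrm{loc}}(\mathbb{R}^2)$, so integration by parts against the compactly supported test function $Z_i \chi_R$ is legitimate.

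For part (i), I would start from
\[
\int (\Delta w) Z_i \chi_R\, dx = \int w\, \Delta(Z_i \chi_R)\, dx = -2\int w e^{W_0} Z_i \chi_R\, dx + \int w \bigl(2\nabla Z_i \cdot \nabla \chi_R + Z_i \Delta \chi_R\bigr)\, dx,
\]
using $\Delta Z_i = -2 e^{W_0} Z_i$ (see \eqref{kerinx2}). To send $R \to \infty$, I would first verify that $(\Delta w) Z_i$ and $e^{W_0} Z_i w$ both lie in $L^1(\mathbb{R}^2)$, by Cauchy--Schwarz against the $Y_\alpha$--weight combined with $|Z_i(x)| \lesssim (1+|x|)^{-N_2 - \gamma_1/2 - 1}$ and $e^{W_0(x)} \lesssim (1+|x|)^{-2N_2 - \gamma_1 - 4}$; dominated convergence then handles the left-hand side and the first right-hand-side term. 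The cutoff error lives on the annulus $R \le |x| \le 2R$; combining Lemma \ref{w}(ii), which gives $|w(x)| \le C\|w\|_{X^i_\alpha} \ln(2+|x|)$, with $|Z_i| + |\nabla Z_i| = O(|x|^{-N_2 - \gamma_1/2 - 1})$, the error is controlled by $C \ln R \cdot R^{-N_2 - \gamma_1/2 - 1} \to 0$, since $N_2 + \gamma_1/2 + 1 > 0$.

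For part (ii), the obstruction is exactly that $Z_0 \to -1$ at infinity, so the cutoff argument above would leave a non-vanishing boundary contribution. The key step is the decomposition $Z_0 = -1 + (Z_0 + 1)$, where
\[
Z_0(x) + 1 = \frac{2}{1 + |x|^{2N_2 + \gamma_1 + 2}}
\]
decays polynomially with a large exponent. The first piece produces $-\int \Delta w\, dx$ directly, and this integral is finite because $\Delta w \in L^1(\mathbb{R}^2)$: Cauchy--Schwarz gives $\int |\Delta w|\, dx \le \|\Delta w\|_{Y_\alpha} \|(1+|x|)^{-1-\alpha/2}\|_{L^2}$, and the second factor is finite for $\alpha > 0$. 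For the second piece I would apply the cutoff integration-by-parts argument from (i): since $\Delta(Z_0 + 1) = \Delta Z_0 = -2 e^{W_0} Z_0$ and both $Z_0 + 1$ and $\nabla(Z_0 + 1)$ decay with an even larger exponent, the cutoff error vanishes trivially, producing $\int (\Delta w)(Z_0 + 1)\, dx = -2\int w e^{W_0} Z_0\, dx$. Adding the two pieces yields the claim.

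The main obstacle is not an analytic estimate but this geometric point: $Z_0$ has a nonzero limit at infinity. Isolating that constant into the extra term $-\int \Delta w\, dx$ is exactly what the identity in (ii) records, and once that decomposition is made every other step is a routine cutoff approximation governed by the polynomial decay of $e^{W_0}$, $Z_i$, $\nabla Z_i$ and the logarithmic growth bound in Lemma \ref{w}(ii).
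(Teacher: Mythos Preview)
Your proof is correct and follows essentially the same strategy as the paper: cutoff approximation, integration by parts against the compactly supported test function, the decomposition $Z_0 = -1 + (Z_0+1)$ for part (ii), and control of the annular error terms via the decay of $Z_i$ and the growth control on $w$. The only cosmetic difference is that the paper bounds the cutoff error using $\|w\rho_2\|_{L^2(\mathbb{R}^2)}$ and H\"older's inequality, whereas you use the pointwise logarithmic bound from Lemma~\ref{w}(ii); both work.
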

\begin{proof}  We recall that \begin{equation}
\begin{aligned}\left\{ \begin{array}{ll}\label{recallz0}
Z_{0}(x)+1=\frac{1-|x|^{2N_{2}+\gamma_1+2}}{1+|x|^{2N_{2}+\gamma_1+2}}+1=O(|x|^{-2N_{2}-\gamma_1-2}),
\\ Z_{1}(x)=\frac{|x|^{N_{2}+\frac{{\gamma_1}}{2}+1}\cos[(N_{2}+\frac{{\gamma_1}}{2}%
+1)\theta]}{1+|x|^{2N_{2}+\gamma_1+2}}=O(|x|^{-N_{2}-\frac{\gamma_1}{2}-1}),
\\ Z_{2}(x)=\frac{|x|^{N_{2}+\frac{{\gamma_1} }{2}+1}\sin[(N_{2}+\frac{{\gamma_1}}{2}%
+1)\theta]}{1+|x|^{2N_{2}+\gamma_1+2}}=O(|x|^{-N_{2}-\frac{\gamma_1}{2}-1}).
\end{array}\right. \textrm{as}\ |x|\to+\infty\end{aligned}
\end{equation}
 Let  $0\le\tilde{\chi}(x)\le 1$ be  a smooth cut-off function such that
$\tilde{\chi}(x)=1$ if $|x|\le1$ and $\tilde{\chi}(x)=0$ if $|x|\ge2$. For any $\delta>0$, we denote $\tilde{\chi}_{\delta}(x)=\tilde{\chi}(\delta x)$.  Since $w\in X^1_\alpha\cup X^2_\alpha$,  we have $\Delta w\in Y_{\alpha}$, and    if $|x|\ge \frac{1}{\delta}$, then there is a constant $C_a>0$, independent of $w\in X^1_\alpha\cup X^2_\alpha$ and $\delta>0$, such that
\begin{equation*}\begin{aligned}\int_{|x|\ge \frac{1}{\delta}}\sum_{i=1}^2|(\Delta w) Z_{i}|+|(\Delta w) (Z_{0}+1)|dx\le C_a\delta ^{\frac{\alpha}{2}}\|\Delta w\|_{Y_\alpha}.\end{aligned}\end{equation*}
Then we have \begin{equation}\label{intbypart1}\intr(\Delta w) Z_{i}dx=\lim_{\delta\to0}\intr( \Delta w) \tilde{\chi}_{\delta} Z_{i}dx\ \textrm{for}\ i=1,2,\ \ \textrm{and}\end{equation} \begin{equation}\intr(\Delta w) (Z_{0}+1)dx=\lim_{\delta\to0}\intr( \Delta w) \tilde{\chi}_{\delta} (Z_{0}+1)dx.\end{equation}
Then by integration by parts and \eqref{recallz0}, we see that \begin{equation}\begin{aligned} &\intr( \Delta w) \tilde{\chi}_{\delta} Z_{i}dx =\intr  w \Delta( \tilde{\chi}_{\delta} Z_{i})dx
\\&= \intr  w \Big((\Delta \tilde{\chi}_{\delta})Z_{i}+2 \nabla  \tilde{\chi}_{\delta}\cdot\nabla Z_{i}  -2\tilde{\chi}_{\delta} e^{W_0} Z_{i}\Big)dx\ \textrm{for}\ i=1,2,\ \textrm{and}\end{aligned}\end{equation}\begin{equation}\begin{aligned} &\intr( \Delta w) \tilde{\chi}_{\delta} (Z_{0}+1)dx =\intr  w \Delta\{ \tilde{\chi}_{\delta} (Z_{0}+1)\}dx
\\&= \intr  w \Big((\Delta \tilde{\chi}_{\delta})(Z_{0}+1)+2 \nabla  \tilde{\chi}_{\delta}\cdot\nabla Z_{0}  -2\tilde{\chi}_{\delta} e^{W_0} Z_{0}\Big)dx,\end{aligned}\end{equation}
 Since $w\in X^1_\alpha\cup X^2_\alpha$,  by Lemma \ref{w}, it is easy to see that $\|w\rho_2\|_{L^2(\RN)}$ is finite.
By using H\"{o}lder's inequality, we have  a constant $c_a>0$, independent of $w\in X^1_\alpha\cup X^2_\alpha$ and $\delta>0$, such that
\begin{equation}\begin{aligned}  & \intr \sum_{i=1}^2\Big( |w  (\Delta \tilde{\chi}_{\delta})Z_{i}| + | \nabla  \tilde{\chi}_{\delta}\cdot\nabla Z_{i}|\Big)+|w  (\Delta \tilde{\chi}_{\delta})(Z_{0}+1)| + | \nabla  \tilde{\chi}_{\delta}\cdot\nabla Z_{0}| dx \\&\le c_a \delta^{\frac{2-\alpha}{2}}\|w\rho_2\|_{L^2(\RN)}, \ \ \textrm{and}\end{aligned}\end{equation}  \begin{equation}\begin{aligned}\int_{|x|\ge \frac{1}{\delta}}\sum_{i=1}^2 |  w  e^{W_0} Z_{i}|+|  w  e^{W_0} Z_{0}| dx\le c_a\delta ^{2}\|w\rho_2\|_{L^2(\RN)}.\label{intbtpart2}\end{aligned}\end{equation}
By \eqref{intbypart1}-\eqref{intbtpart2}, we complete the proof of Lemma \ref{integrationbyparts}. \end{proof}
We recall the projection map ${Q}$ defined in \eqref{proj_q}. Now we have the following lemma.
\begin{lemma}
\label{pronorm} \begin{equation}\label{eeqref}
\Vert {Q}h\Vert_{Y_{\alpha}}\leq c\Vert h\Vert_{Y_{\alpha}},\end{equation}
 where $c>0$ is a constant which is independent of $h\in Y_\alpha$.
\end{lemma}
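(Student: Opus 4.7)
The statement is trivial when $\gamma_1/2 \notin \mathbb{N}$ since then $Q$ is the identity. So I restrict attention to the case $\gamma_1/2 \in \mathbb{N}$, where $Qh = h - c_1 Z_1 - c_2 Z_2$ with constants $c_1, c_2$ determined by the orthogonality conditions $\int_{\mathbb{R}^2} (Qh) Z_j\, dx = 0$ for $j=1,2$. The plan is to solve explicitly for the $c_i$, bound them in terms of $\|h\|_{Y_\alpha}$, and then use the triangle inequality together with $Z_i \in Y_\alpha$.

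First I would exploit the angular structure of $Z_1$ and $Z_2$ in \eqref{z0}: they involve $\cos[(N_2+\gamma_1/2+1)\theta]$ and $\sin[(N_2+\gamma_1/2+1)\theta]$ respectively, while sharing the same radial factor. Orthogonality of sine and cosine over $[0,2\pi]$ yields $\int_{\mathbb{R}^2} Z_i Z_j\, dx = \delta_{ij}\int_{\mathbb{R}^2} Z_i^2\, dx$, and both integrals are finite and strictly positive since $Z_i \in Y_\alpha \cap L^2(\mathbb{R}^2)$ under the range of $\alpha$ fixed in Definition \ref{D3}. Therefore the linear system for $(c_1, c_2)$ is diagonal and gives
\begin{equation*}
c_j = \frac{\int_{\mathbb{R}^2} h\, Z_j\, dx}{\int_{\mathbb{R}^2} Z_j^2\, dx}, \quad j=1,2.
\end{equation*}

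Next I would estimate the numerator by a weighted Cauchy--Schwarz inequality, pairing the weight $(1+|x|)^{1+\alpha/2}$ (which appears in the definition of $Y_\alpha$) with its reciprocal:
\begin{equation*}
\Big|\int_{\mathbb{R}^2} h\, Z_j\, dx\Big| \leq \|h\|_{Y_\alpha} \cdot \Big\| Z_j (1+|x|)^{-1-\alpha/2}\Big\|_{L^2(\mathbb{R}^2)}.
\end{equation*}
The $L^2$ norm on the right is finite: near the origin $Z_j = O(|x|^{N_2+\gamma_1/2+1})$, while at infinity $Z_j = O(|x|^{-N_2-\gamma_1/2-1})$, so the integrand decays sufficiently fast once we multiply by the additional factor $(1+|x|)^{-2-\alpha}$. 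Hence $|c_j| \leq C\|h\|_{Y_\alpha}$ for a constant $C$ depending only on the fixed parameters.

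Finally, the triangle inequality gives
\begin{equation*}
\|Qh\|_{Y_\alpha} \leq \|h\|_{Y_\alpha} + \sum_{i=1}^2 |c_i|\,\|Z_i\|_{Y_\alpha} \leq c\,\|h\|_{Y_\alpha},
\end{equation*}
since $Z_1, Z_2 \in Y_\alpha$ (as noted in the paper just after \eqref{kerinx2} under the restriction on $\alpha$). There is no real obstacle here beyond checking the integrability of $Z_j (1+|x|)^{-1-\alpha/2}$ and $Z_j$ itself in $Y_\alpha$, both of which follow immediately from the explicit decay rates recorded in \eqref{z0} together with $\gamma_1 > 2N_1 + 2N_2 + 4 > 2$.
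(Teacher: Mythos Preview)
Your proof is correct and follows essentially the same approach as the paper's own proof: both exploit the orthogonality $\int_{\mathbb{R}^2} Z_i Z_j\,dx = \delta_{ij}\int_{\mathbb{R}^2} Z_i^2\,dx$ to diagonalize the system for $(c_1,c_2)$, bound $\int h Z_j$ via a weighted Cauchy--Schwarz/H\"older inequality against $\|h\|_{Y_\alpha}$, and conclude by the triangle inequality using $Z_i\in Y_\alpha$.
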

\begin{proof}If $\frac{{\gamma_1}}{2}\notin\mathbb{N}$, \eqref{eeqref} is trivial.  So we consider the case   $\frac{{\gamma_1}}{2}\in\mathbb{N}$.
By ${Q}h=h-\sum_{i=1}^2c_iZ_{i}\in F_\alpha$, we have for $i\neq j$,
$
0=\int_{\mathbb{R}^2} ({Q} h) Z_{i}dx=\int h Z_{i}-c_i Z_{i}^2-c_j Z_{i}Z_{j}dx.
$
By using H\"{o}lder inequality,  we get that
$
\Big| \int_{\mathbb{R}^2} hZ_{i}dx\Big|\le
 \|h\|_{Y_\alpha}\|Z_{i}(1+|x|)^{-1-\frac{\alpha}{2}}\|_{L^2(\mathbb
{R}^2 )}.
$
Moreover, we note that
\begin{equation*}
\begin{aligned}
& \left(\begin{array}{ll}\ \ \int_{\RN} Z_{1}^2dx  \quad \ \ \ \
\quad \ \int_{\RN} Z_{1}Z_{2}dx\\ \  \  \int_{\RN} Z_{1}Z_{2}dx \quad \quad
\quad \int_{\RN} Z_{2}^2dx \end{array}\right) \left(\begin{array}{ll}c_1\\
c_2\end{array}\right)=\left(\begin{array}{ll}\int_{\RN}hZ_{1}dx\\
\int_{\RN}hZ_{2}dx\end{array}\right). \end{aligned}
\end{equation*}
We see that  $\int_{\RN} Z_{i}Z_{j}=\int_{\RN}(Z_{i})^2dx \delta_{i,j}$ yields
$\|{Q}h\|_{Y_\alpha}\le\|h\|_{Y_\alpha}+\sum_{i=1}^2|c_i|\|Z_{i}\|_{Y_\alpha}
\le c\|h\|_{Y_\alpha},
$ where $c>0$ is a constant which is independent of $h\in Y_\alpha$.
\end{proof}
{\bf Acknowledgement}\\
The authors wish to thank an anonymous referee very much for careful reading
and valuable comments. Y. Lee was supported by the National Research Foundation of Korea(NRF) grant funded by the Korea government(MSIT) (No. NRF-2018R1C1B6003403).

\end{document}